\definecolor{ForestGreen}{rgb}{0.1,0.6,0.05}
\definecolor{EgyptBlue}{rgb}{0.063,0.1,0.6}
\definecolor{RipeOlive}{HTML}{556B2F}
\newtheorem{theorem}{Theorem}
\newtheorem{proposition}[theorem]{Proposition}
\newtheorem{lemma}[theorem]{Lemma}
\theoremstyle{definition}
\newtheorem{definition}[theorem]{Definition}
\newtheorem{remark}[theorem]{Remark}
\numberwithin{equation}{section}
\numberwithin{theorem}{section}
\numberwithin{equation}{section}
\numberwithin{theorem}{section}
\newenvironment{proof*}[1]{\begin{trivlist}\item[\hskip%
		\labelsep{{\bf Proof of \/{\rm\bf #1.}}~}]\rm}%
	{\hfill\qed\rm\end{trivlist}}
\newcommand{\W}{W_0^{1,p}}
\newcommand{\intO}{\int_\Omega}
\newcommand{\I}{I_{\lambda}}
\newcommand{\In}{I_{\lambda_n}}
\newcommand{\wI}{\widetilde{I}_{\lambda}}
\newcommand{\wIm}{\widetilde{I}_{\lambda}^\mu}
\newcommand{\E}{E_\lambda}
\newcommand{\wE}{\widetilde{E}_{\lambda}}
\newcommand{\A}{\mathcal{A}}
\newcommand{\J}{J_{\lambda}}
\newcommand{\N}{\mathcal{N}_{\lambda}}
\newcommand{\wN}{\widetilde{\mathcal{N}}_{\lambda}}
\DeclareRobustCommand\lambdaM{\lambda}
\title{
	\vspace*{-1cm}
	On subhomogeneous indefinite $p$-Laplace equations in supercritical spectral interval} 
\author{ 
	\normalsize Vladimir Bobkov\\ 
	{\small  Institute of Mathematics, Ufa Federal Research Centre, RAS}\\ 
	{\small Chernyshevsky str.\ 112, Ufa 450008, Russia}\\
	{\small e-mail: bobkov@matem.anrb.ru}\\[0.5em] 	
	\normalsize Mieko Tanaka\\
	{\small Department of  Mathematics, 
		Tokyo University of Science}\\
	{\small Kagurazaka 1-3, Shinjyuku-ku, Tokyo 162-8601, Japan}\\
	{\small e-mail: miekotanaka@rs.tus.ac.jp} 
}
\date{}
\begin{document}
	\maketitle 
	
	\begin{abstract}
		We study the existence, multiplicity, and certain qualitative properties of solutions to the zero Dirichlet problem for the equation 
		$-\Delta_p u = \lambda |u|^{p-2}u + a(x)|u|^{q-2}u$ in a bounded domain $\Omega \subset \mathbb{R}^N$, where $1<q<p$, $\lambda\in\mathbb{R}$, and $a$ is a continuous sign-changing weight function.
		Our primary interest concerns ground states and nonnegative solutions which are positive in $\{x\in \Omega: a(x)>0\}$, when the parameter $\lambda$ lies in a neighborhood of the critical value $\lambda^* := \inf\left\{\int_\Omega |\nabla u|^p \, dx/\int_\Omega |u|^p \, dx: u\in\W(\Omega) \setminus \{0\},\ \intO a|u|^q\,dx \geq 0\,\right\}$. 
		Among main results, we show that if $p>2q$ and either $\intO a\varphi_p^q\,dx=0$ or $\intO a\varphi_p^q\,dx>0$ is sufficiently small, then such solutions do exist in a \textit{right} neighborhood of $\lambda^*$. 
		Here $\varphi_p$ is the first eigenfunction of the Dirichlet $p$-Laplacian in $\Omega$.
		This existence phenomenon is of a purely subhomogeneous and nonlinear nature, since either in the superhomogeneous case $q>p$ or in the sublinear case $q<p=2$ the nonexistence takes place for any $\lambda \geq \lambda^*$. 
		Moreover, we prove that if $p>2q$ and $\intO a\varphi_p^q\,dx>0$ is sufficiently small, then there exist \textit{three} nonzero nonnegative solutions in a \textit{left} neighborhood of $\lambda^*$, two of which are strictly positive in $\{x\in \Omega: a(x)>0\}$.
		
		\par
		\smallskip
		\noindent {\bf  Keywords}:\ $p$-Laplacian, subhomogeneous, sublinear, existence, nonexistence, three solutions, ground states, least energy solutions, positive solutions, fibered functional, Picone inequality.
		
		\par
		\smallskip
		\noindent {\bf  MSC2010}: \ 
		35P30,  
		35B09,  
		35J62,  
		35J20   
	\end{abstract}

	\section{Introduction}\label{sec:intro}
	In the present work, we study the boundary value problem
	\begin{equation}\label{eq:P}
		\tag{$P_{\lambdaM}$}
		\left\{
		\begin{aligned}
			-\Delta_p u &= \lambda |u|^{p-2}u+ a(x)|u|^{q-2}u 
			&&\text{in}\ \Omega, \\
			u&=0 &&\text{on}\ \partial \Omega,
		\end{aligned}
		\right.
	\end{equation}
	where the $p$-Laplace operator $\Delta_p$ with $p>1$ acts formally as $\Delta_p u = \text{div}\left(|\nabla u|^{p-2} \nabla u \right)$, 
	$\lambda\in \mathbb{R}$ is a parameter, and $\Omega \subset \mathbb{R}^N$ is a bounded domain, $N \geq 1$. In the case $N \geq 2$, we ask the boundary $\partial\Omega$ of $\Omega$ to be at least $C^2$-smooth.
	Unless explicitly stated otherwise, we always assume that $1<q < p$ and 
	the weight function $a \in C(\overline{\Omega}) \setminus \{0\}$ is sign-changing, i.e., $\Omega_a^\pm \neq \emptyset$, where
	$$
	\Omega_a^+ := \{x \in \Omega:~ a(x)>0\}
	\quad \text{and} \quad
	\Omega_a^- := \{x \in \Omega:~ a(x)<0\}. 
	$$
	We also denote
	$$
	\Omega_a^0 := \{x \in \Omega:~ a(x)=0\}. 
	$$
	Because of the two latter assumptions, 
	the problem \eqref{eq:P} is called \textit{subhomogeneous} ($q<p$) and \textit{indefinite} ($\Omega_a^\pm \neq \emptyset$). 
	Nevertheless, a few of our results remain valid when $a$ is sign-constant. 
	
	Although in the case $p=2$ the problem \eqref{eq:P} can be considered pointwisely, in the general case $p>1$ this problem is understood in the weak sense. 
	Namely, we say that $u\in\W(\Omega)$ is a (weak) solution of \eqref{eq:P} if the equality
	\begin{equation*}
		\intO |\nabla u|^{p-2}\nabla u\nabla\varphi \,dx 
		= \lambda \intO |u|^{p-2}u\varphi\,dx
		+\intO a|u|^{q-2}u\varphi\,dx 
	\end{equation*}
	holds for all $\varphi\in \W(\Omega)$. 
	It is not hard to see that such solutions are precisely critical points of the energy functional $ \I \in C^1(\W(\Omega),\mathbb{R})$ defined as
	\begin{equation}\label{def:functionals} 
		\I (u):=\dfrac{1}{p}\,E_\lambda(u)-\dfrac{1}{q}\intO a|u|^q\,dx,
		\quad \text{where} \quad 
		E_\lambda(u):=\intO |\nabla u|^{p}\, dx-\lambda \intO |u|^{p} \, dx. 
	\end{equation}
	
	\begin{remark}\label{rem:reg} 
		Any solution of \eqref{eq:P} belongs to $C^{1,\beta}_0(\overline{\Omega})$ with some $\beta\in(0,1)$. 
		In fact, if $u$ is a solution of \eqref{eq:P}, 
		then $u\in
		L^\infty(\Omega)$, which can be shown by the standard Moser iteration process (see, e.g., 
		\cite[Appendix A]{MMT}). 
		Hence, the regularity up to the boundary given by \cite[Theorem~1]{Lieberman} 
		ensures that $u\in
		C^{1,\beta}_0(\overline{\Omega})$.
	\end{remark} 
	
	The distinguishing feature of \eqref{eq:P} to comprise a mixture of the subhomogeneous (even non-Lipschitz when $q < 2$) and indefinite natures made this simply looking problem a subject of considerable interest over the last thirty years, and several important contributions to its understanding have been provided only recently, even in the case $p=2$ and $\lambda=0$.
	In particular, thanks to the assumption $q<p$ and the presence of the sign-changing weight $a$, 
	{\it nonnegative solutions of \eqref{eq:P} do not obey, in general, the strong maximum principle}.
	(See, for instance, \cite{PS2} for a comprehensive summary of maximum principles.)
	As a result, an emergence of the so-called {\it dead core} solutions, i.e., nonzero solutions vanishing in a subdomain of $\Omega$, can be observed.
	We refer the reader to, e.g., \cite{BPT1,diaz,KQU} for a deeper discussion which includes explicit constructions of such solutions, their qualitative properties, and description of earlier results in this direction.
	
	Apart from the dead core solutions, two other classes of solutions of \eqref{eq:P} are of significant importance:
	\begin{enumerate}
		\item Ground states and nonnegative ground states.
		\begin{definition}\label{def:ground}
			We say that a nonzero critical point $u$ of $\I$ is a \textit{ground state} (or, equivalently, a \textit{least energy solution}) of \eqref{eq:P} if $\I(u) \leq \I(v)$ for any nonzero critical point $v$ of $\I$.
		\end{definition}
		\item Solutions which are (strictly)
		positive\footnote{Throughout this work, the words ``positive'' and ``negative'' mean ``$>0$\,'' and ``$<0$\,'', respectively. The word ``strictly'' will be  used occasionally for accentuation and clarification.} 
		in $\Omega_a^+$.
	\end{enumerate}

	In general, ground states may not always have a strictly constant sign in $\Omega_a^+$. 
	This is indicated by \cite{BDO,DH,DHI1,FLS}, in the context of study of compact support solutions of \eqref{eq:P} (with sign-constant weight) for sufficiently large $\lambda$, see also Proposition \ref{prop:nonlipschitz} below.
	Furthermore, \eqref{eq:P} might possess \textit{sign-changing} ground states, as it follows, e.g., from \cite[Theorem 1.8]{KQU} (see, more precisely, \cite[Remark 1.9]{KQU} for the construction of a two-bumps solution of \eqref{eq:P} in 1D case, one bump of which might be reflected over the $x$-axis to make this solution sign-changing).
	On the other hand, under certain assumptions, there exist nonnegative mountain pass solutions of \eqref{eq:P} which are positive in $\Omega_a^+$, but they are not ground states, see, e.g., \cite[Theorem 1.4]{KQU}.
	Thus, the classes of ground states and solutions which are positive in $\Omega_a^+$ are independent.
	These two classes of solutions are the main objects of our study. Several known results on the existence of such solutions with respect to the parameter $\lambda$ are collected in Section~\ref{sec:subcritical}.
	
	The combination of subhomogeneous and indefinite natures suggests the consideration of nonnegative solutions of the problem \eqref{eq:P} in the following three ranges of the parameter:
	\begin{enumerate}
		\item Nonpositive values of $\lambda$, with the special emphasis on the case $\lambda=0$.
		Such values of $\lambda$ allow, in particular, a deeper investigation of the formation of dead cores or positivity, and the study of uniqueness issues. 
		We refer to the series of articles \cite{KQU1,KQU,KQU2} and references therein.
		\item ``Middle'' values of $\lambda$. 
		In this range, the primary interest is to consider the existence of negative energy ground states and the multiplicity of solutions which are positive in $\Omega_a^+$. 
		The results of the present work correspond to such values of $\lambda$, see Sections \ref{sec:subcritical} and \ref{sec:mainresults} for an overview.
		\item ``Large'' values of $\lambda$. 
		In this range, there are no solutions positive in $\Omega_a^+$ (see Proposition~\ref{prop:bounded} below). 
		However, nonzero solutions which vanish in $\Omega_a^+$ might exist.
		The existence and properties of such solutions and positive energy ground states are of the main importance.
		We refer to \cite{DH,DHI1,FLS} for the consideration of these and related issues.
	\end{enumerate}
	
	Let us mention that the problem \eqref{eq:P} in the superhomogeneous regime $q>p$ is somewhat more developed.\footnote{When commenting on the superhomogeneous case $q>p$, we always assume that $q<p^*$, where $p^*$ is the critical Sobolev exponent for $N \geq 3$, and $p^*=+\infty$ for $N=1,2$.} 
	The strong maximum principle holds in this case, which yields the positivity of any nonzero nonnegative solution. 
	In particular, there are no nonnegative dead core solutions, and there are no positive solutions for sufficiently large $\lambda$.
	Although the base-level multiplicity information for $q>p$ and $q<p$ for the ``middle'' values of $\lambda$ looks similar, the structure and properties of the corresponding solution sets are completely different. 
	The difference is amplified by our main results stated in Section \ref{sec:mainresults}.
	We refer the reader to the list of classical works \cite{alama-del,alama-tar,berest,drabek-poh,ilyasov1,ouyang} and to a more resent article \cite{IS} on the problem \eqref{eq:P} in the superhomogeneous case $q>p$.
	In the following subsections, we will comment on several known results from these works in more detail.
	
	Finally, we mention that even when the weight $a$ is sign-constant, various recent contributions on the subhomogeneous problem \eqref{eq:P} have been made.
	In the case $a \geq 0$, in which the strong maximum principle holds, we refer to \cite{BDSWPT} for the existence of least energy nodal solutions, to \cite{brasco} for an overview on the corresponding eigenvalue problem ($\lambda=0$ and $a(x)=\mu$), and to \cite{kaji1} for the existence of infinitely many solutions with negative energy converging to zero. 
	On the other hand, in the case $a \leq 0$, the problem \eqref{eq:P} has been investigated in, e.g.,  \cite{BDO,DH,DHI1,FLS}, in the context of study of compact support and ``flat'' solutions.

	\medskip
	In the following subsection, we briefly overview several known results, as well as new ones,  on the problem \eqref{eq:P} which are the basis of our further analysis provided in Section \ref{sec:mainresults}.

	\subsection{Behavior in subcritical spectral interval}\label{sec:subcritical}
	
	Hereinafter, for brevity, we denote by $\W:= W_0^{1,p}(\Omega)$ the standard Sobolev space and by $\| \cdot \|_r$ the standard norm in $L^r(\Omega)$, $r \in [1,+\infty]$. 
	The first eigenvalue of the Dirichlet $p$-Laplacian is denoted by
	$$
	\lambda_1(p) 
	:= 
	\inf
	\left\{
	\frac{\|\nabla u\|_p^p}{\|u\|_p^p}:~ u\in\W \setminus \{0\}	
	\right\}.
	$$
	This eigenvalue is simple and isolated, the corresponding first eigenfunction $\varphi_p$ belongs to $C^{1,\beta}_0(\overline{\Omega})$, $\beta\in(0,1)$, and $\varphi_p$ is of a constant sign in $\Omega$, see, e.g., \cite{anane1987}.
	We assume, without loss of generality, that $\varphi_p>0$ in $\Omega$ and $\|\nabla\varphi_p\|_p=1$.

	Noting that the energy functional $\I$ defined by \eqref{def:functionals} is bounded from below and coercive when $\lambda<\lambda_1(p)$, it is not hard to show that in this case $\I$ has a global minimizer, this minimizer has negative energy, it is a ground state of \eqref{eq:P}, and any nonnegative minimizer is positive in $\Omega_a^+$, see, e.g., \cite[Section 1.2]{KQU}.
	On the other hand, by taking $u=t\varphi_p$ and letting $t \to +\infty$, we see that $\inf\{ \I(u): u\in\W\} = -\infty$ when $\lambda>\lambda_1(p)$, and hence no global minimizer exists. 
	That is, ground states of \eqref{eq:P} cannot be characterized as global minimizers of $\I$ for $\lambda>\lambda_1(p)$.
	In the borderline case $\lambda=\lambda_1(p)$, the existence of global minimizers is a more subtle issue which depends on the settings of the problem. 
	We provide a corresponding result in Section \ref{sec:mainresults}. 
	
	In order to find a possibly larger spectral interval of the existence of solutions to \eqref{eq:P}, it is natural to investigate minimizers of $\I$ over a subset of $\W$ described by the
	{\it Nehari manifold}
	\begin{equation}\label{eq:nehari}
		\N :=\left\{u \in \W \setminus\{0\}:\, 
		\langle \I^\prime(u),u \rangle=0\,\right\} 
		=\left\{u \in \W \setminus\{0\}:\, 
		\E(u)=\intO a |u|^q\,dx\,\right\}.
	\end{equation}
	Consider the corresponding minimal level of $\I$:
	\begin{equation}\label{eq:M}
		{M}(\lambda)
		:=
		\inf\{ \I (u):~ u\in \N \}.
	\end{equation}
	The following result asserts that ground states of \eqref{eq:P} can be characterized as minimizers of $M(\lambda)$ whenever $M(\lambda)$ is attained, cf.\ \cite[Theorem 1.4 (1)]{KQU}.
	\begin{proposition}\label{prop:charact}
	\marginnote{{\scriptsize Proof on\\ p.\pageref{page:prop:charact}}}
		Let $\lambda \in \mathbb{R}$ be such that $M(\lambda)$ is attained. 
		Then any corresponding minimizer $u$ is a ground state of \eqref{eq:P}, $\I(u)<0$, $u$ is a local minimum point of $\I$, and either $u>0$ or $u<0$ in $\Omega_a^+$. 
		Moreover, $u$ is a global minimum point of $\I$ provided $\lambda \leq \lambda_1(p)$.
	\end{proposition}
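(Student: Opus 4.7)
The plan is to exploit two structural features of the Nehari manifold and the subhomogeneous fibering map. First, the defining equation of $\N$ yields the identity
\[
\I(u)=\left(\frac{1}{p}-\frac{1}{q}\right)\E(u)=\left(\frac{1}{p}-\frac{1}{q}\right)\intO a|u|^q\,dx
\qquad\text{for every }u\in\N,
\]
so the sign and magnitude of $\I$ on $\N$ are governed by $\E$. Second, whenever $\E(v),\intO a|v|^q\,dx>0$, the fibering map
$\phi_v(t):=\I(tv)=\tfrac{t^p}{p}\E(v)-\tfrac{t^q}{q}\intO a|v|^q\,dx$
has a unique positive critical point, a strict global minimum on $(0,\infty)$, since $q<p$.

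As a preliminary I would show $M(\lambda)<0$: fix a ball $B\subset\Omega_a^+$ small enough that the first Dirichlet eigenvalue of $-\Delta_p$ on $B$ exceeds $\lambda$ (possible by Faber--Krahn-type scaling), and pick $0\le w\in W_0^{1,p}(B)\setminus\{0\}$. Then $\E(w)>0$ and $\intO a|w|^q\,dx>0$, so $\phi_w$ attains a negative minimum at some $t_*>0$ with $t_*w\in\N$, giving $M(\lambda)\le\I(t_*w)<0$. By the identity above, any minimizer $u$ of $M(\lambda)$ consequently satisfies $\E(u),\intO a|u|^q\,dx>0$ and $\I(u)<0$. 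Next, the Lagrange multiplier rule applied to $\I$ on the constraint $\N=\{G=0\}$ with $G(v):=\E(v)-\intO a|v|^q\,dx$ yields $\I'(u)=\mu G'(u)$; pairing with $u$ and using $\langle\I'(u),u\rangle=0$ together with $\langle G'(u),u\rangle=(p-q)\E(u)\neq 0$ forces $\mu=0$, so $u$ solves \eqref{eq:P}. The ground state property is then immediate since every nonzero critical point of $\I$ lies in $\N$.

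For the local minimum assertion, continuity of $\E$ and $v\mapsto\intO a|v|^q\,dx$ produces a $\W$-neighborhood $U$ of $u$ on which both remain positive; each $v\in U$ admits a unique $t(v)>0$ with $t(v)v\in\N$ and $t(u)=1$, so
\[
\I(v)=\phi_v(1)\ge\phi_v(t(v))=\I(t(v)v)\ge M(\lambda)=\I(u).
\]
For the sign in $\Omega_a^+$, the invariances $\I(|u|)=\I(u)$ and $|u|\in\N$ let us assume $u\ge 0$; by Remark~\ref{rem:reg}, $u\in\C$, and on each connected component $C$ of $\Omega_a^+$ the inequality $-\Delta_p u\ge\lambda u^{p-1}$ (valid since $au^{q-1}\ge 0$ on $C$) together with V\'azquez's strong maximum principle yields the dichotomy $u>0$ in $C$ or $u\equiv 0$ in $C$. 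The alternative $u\equiv 0$ on $C$ is excluded by the perturbation: for any $0\not\equiv\psi\in C_0^\infty(C)$ with $\psi\ge 0$,
\[
\I(u+\epsilon\psi)-\I(u)=\frac{\epsilon^p}{p}\|\nabla\psi\|_p^p-\frac{\lambda\epsilon^p}{p}\|\psi\|_p^p-\frac{\epsilon^q}{q}\int_C a\psi^q\,dx,
\]
which is negative for small $\epsilon>0$ since $q<p$ and $\int_C a\psi^q\,dx>0$, contradicting the local minimum just proved.

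Finally, for $\lambda\le\lambda_1(p)$ the Rayleigh characterization gives $\E(v)\ge 0$ for every $v\in\W$: if $\intO a|v|^q\,dx\le 0$ then $\phi_v$ is nondecreasing on $(0,\infty)$, whence $\I(v)\ge\I(0)=0>M(\lambda)$, while if $\intO a|v|^q\,dx>0$ and $\E(v)>0$ the fibering projection $t(v)v\in\N$ gives $\I(v)\ge\I(t(v)v)\ge M(\lambda)=\I(u)$; the borderline $\E(v)=0$ direction (possible only at $\lambda=\lambda_1(p)$ along multiples of $\varphi_p$) is handled using that the existence of a finite minimum of $M$ precludes $\I$ being unbounded below. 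I expect the strict positivity assertion on $\Omega_a^+$ to be the main technical obstacle, as it couples V\'azquez's SMP (which must be applied uniformly in the sign of $\lambda$) with the perturbation argument, which itself relies on the local minimum property established just before.
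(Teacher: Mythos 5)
Your proposal follows essentially the same route as the paper's proof: the identity $\I(u)=-\frac{p-q}{pq}\E(u)$ on $\N$, the fibering projection $t_\lambda(v)v$ for both the local-minimum and global-minimum claims, the Lagrange multiplier computation (the paper's Lemma \ref{lemm:crit}), and the reduction to $|u|$ followed by the strong maximum principle plus a $C_0^\infty$-perturbation supported in a component of $\Omega_a^+$ (the paper's Lemma \ref{lem:positivity}). All of these steps are carried out correctly and match the paper's argument.

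The one genuine gap is the borderline case of the final assertion. At $\lambda=\lambda_1(p)$, if $w$ satisfies $\I(w)<\I(u)$, $\intO a|w|^q\,dx>0$ and $E_{\lambda_1(p)}(w)=0$, then indeed $w=t\varphi_p$ and $\intO a\varphi_p^q\,dx>0$, and $\I(sw)\to-\infty$ as $s\to+\infty$. But your stated reason for excluding this case --- that ``the existence of a finite minimum of $M$ precludes $\I$ being unbounded below'' --- does not work as written: $M(\lambda)$ is an infimum over the Nehari manifold, and the ray $\mathbb{R}\varphi_p$ is disjoint from $\mathcal{N}_{\lambda_1(p)}$ precisely when $\intO a\varphi_p^q\,dx\neq 0$, since $s\varphi_p\in\mathcal{N}_{\lambda_1(p)}$ would force $0=E_{\lambda_1(p)}(s\varphi_p)=|s|^q\intO a\varphi_p^q\,dx$. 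Hence unboundedness of $\I$ from below along that ray does not by itself contradict the finiteness or attainment of the constrained infimum $M(\lambda)$; a functional can have an attained constrained minimum while being unbounded below off the constraint set. What is actually needed is the implication that $\intO a\varphi_p^q\,dx>0$ forces $M(\lambda_1(p))=-\infty$. The paper obtains this from Theorem \ref{NoGroundStates} \ref{NoGroundStates:3}, whose proof perturbs $\varphi_p$ to $w_\varepsilon=\varphi_p+\varepsilon\theta$ with $\theta\notin\mathbb{R}\varphi_p$, so that $E_{\lambda_1(p)}(w_\varepsilon)>0$ by the simplicity of $\lambda_1(p)$ while $\intO a|w_\varepsilon|^q\,dx\to\intO a\varphi_p^q\,dx>0$, whence the projections $t_{\lambda_1(p)}(w_\varepsilon)w_\varepsilon\in\mathcal{N}_{\lambda_1(p)}\cap\A^+$ have energy tending to $-\infty$. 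You need this (or an equivalent) perturbation argument to close the case; the rest of your proof stands.
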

	
	\begin{remark}
		Evidently, if $u$ is a minimizer of $M(\lambda)$, then so is $|u|$.
		That is, the attainability of $M(\lambda)$ implies the existence of a nonnegative minimizer which is positive in $\Omega_a^+$.
		On the other hand, $M(\lambda)$ might possess sign-changing minimizers, as we indicated above.
	\end{remark}
	
	Proposition \ref{prop:charact} motivates the investigation of assumptions under which $M(\lambda)$ is attained. 
	For this purpose, we introduce the following critical value of the parameter $\lambda$:
	\begin{equation}\label{eq:lambda+*}
		\lambda^* := \inf\left\{\frac{\|\nabla u\|_p^p}{\|u\|_p^p}:~ u\in\W \setminus \{0\},\ \intO a |u|^q\,dx \geq 0\,\right\},
	\end{equation}
	which will play a significant role throughout the work.
	We readily have $\lambda_1(p) \leq \lambda^*$, and the simplicity of $\lambda_1(p)$ yields $\lambda_1(p)<\lambda^*$ if and only if $\intO a \varphi_p^q \,dx < 0$, see Proposition \ref{prop:+} below for details.
	The critical value $\lambda^*$ has the property that the set $\N$ is a $C^1$-manifold of codimension~$1$ for any $\lambda<\lambda^*$, see, e.g., \cite{brown,QS}. 
	In particular, the following information can be obtained (see, e.g., \cite[Lemma 2.3 (1) and Remark 2.4]{KQU}).
	\begin{theorem}\label{thm:GS} 
		The following assertions hold:
		\begin{enumerate}[label={\rm(\roman*)}]
			\item\label{thm:GS:1}  If $\lambda<\lambda^*$, then 
			$M(\lambda) \in (-\infty,0)$ and it is attained. 
			\item\label{thm:GS:2}  If $\lambda>\lambda^*$, then $M(\lambda)=-\infty$. 
		\end{enumerate}
	\end{theorem}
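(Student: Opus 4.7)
Both parts rely on the identity $\I(u) = \bigl(\tfrac{1}{p} - \tfrac{1}{q}\bigr)\intO a|u|^q\,dx$ for $u \in \N$, and the fibering analysis of $t \mapsto \I(tv)$: its positive critical point $t_\ast$ exists precisely when $\E(v)$ and $\intO a|v|^q\,dx$ have the same nonzero sign, and then $t_\ast v \in \N$ with $\I(t_\ast v) = \bigl(\tfrac{1}{p} - \tfrac{1}{q}\bigr) t_\ast^q \intO a|v|^q\,dx$.

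For \ref{thm:GS:1}, I would first pick $v_0 \in C_c^\infty(\Omega_a^+)\setminus \{0\}$, so that $\intO a|v_0|^q\,dx > 0$; the definition of $\lambda^\ast$ and $\lambda < \lambda^\ast$ force $\E(v_0) \geq (\lambda^\ast - \lambda)\|v_0\|_p^p > 0$, and the fiber delivers a Nehari point with negative energy, giving $M(\lambda) < 0$. To prove attainability I take a minimizing sequence $\{u_n\} \subset \N$; the fiber identity makes $\intO a|u_n|^q\,dx$ bounded, and so is $\E(u_n)$ by the Nehari equation. I would rule out $\|\nabla u_n\|_p \to \infty$ by setting $v_n := u_n/\|\nabla u_n\|_p$ and extracting a weak limit $v_0$ (strongly in $L^p$ and $L^q$): dividing the Nehari identity by $\|\nabla u_n\|_p^p$ and by $\|\nabla u_n\|_p^q$ in turn and invoking $q<p$ yields $\lambda \|v_0\|_p^p = 1$ and $\intO a|v_0|^q\,dx = 0$, so $v_0 \neq 0$, $\lambda > 0$, and the quotient $\|\nabla v_0\|_p^p/\|v_0\|_p^p \leq \lambda < \lambda^\ast$ violates the defining infimum of $\lambda^\ast$ (for $\lambda \leq 0$ the same division already gives an impossibility from $\lambda \|v_0\|_p^p = 1$). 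With $\{u_n\}$ bounded, extract a weak limit $u^\ast$, nonzero because $\intO a|u^\ast|^q\,dx = \lim_n \intO a|u_n|^q\,dx > 0$; weak lower semicontinuity applied to the Nehari equation gives $\E(u^\ast) \leq \intO a|u^\ast|^q\,dx$, while the definition of $\lambda^\ast$ upgrades this to $\E(u^\ast) > 0$. Scaling $u^\ast$ to its Nehari point $t_\ast u^\ast$ with $t_\ast \geq 1$ and comparing against the strict minimum of the fiber forces $t_\ast = 1$, so $u^\ast \in \N$ realises $M(\lambda)$.

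For \ref{thm:GS:2}, the defining infimum for $\lambda^\ast$ with $\lambda > \lambda^\ast$ gives $v_0 \in \W \setminus \{0\}$ with $\intO a|v_0|^q\,dx \geq 0$ and $\E(v_0) < 0$; a small perturbation of $v_0$ by a bump supported in $\Omega_a^+$ and essentially disjoint from $v_0$ promotes the first inequality to strict positivity (while preserving $\E<0$). I then introduce spikes $w_n$ concentrating at a point of $\Omega_a^-$, rescaled so that $\E(w_n) \to +\infty$ while $\intO a|w_n|^q\,dx \to 0^-$. For each large $\alpha > 0$, the Nehari equation for $\alpha v_0 + \beta w_n$, modulo cross terms that vanish as $\mathrm{supp}(w_n)$ shrinks, reduces to
\[
\beta^p \E(w_n) + \beta^q \Bigl|\intO a|w_n|^q\,dx\Bigr| = \alpha^p |\E(v_0)| + \alpha^q \intO a|v_0|^q\,dx,
\]
admitting a unique positive solution $\beta_n(\alpha) \sim \alpha\bigl(|\E(v_0)|/\E(w_n)\bigr)^{1/p}$ for large $\alpha$. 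By choosing $n$ so large that $\bigl(|\E(v_0)|/\E(w_n)\bigr)^{q/p}\bigl|\intO a|w_n|^q\,dx\bigr| \ll \intO a|v_0|^q\,dx$, the energy formula yields $\I(\alpha v_0 + \beta_n(\alpha) w_n) \sim \bigl(\tfrac{1}{p} - \tfrac{1}{q}\bigr)\alpha^q \intO a|v_0|^q\,dx \to -\infty$ as $\alpha \to \infty$, so $M(\lambda) = -\infty$.

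The most delicate step in \ref{thm:GS:1} is the two-scale blow-up: one must exploit the different $p$- and $q$-homogeneities of the Nehari identity by two distinct rescalings, and the limit $\intO a|v_0|^q\,dx = 0$ requires the cancellation coming from $q < p$. In \ref{thm:GS:2}, the main technical burden is verifying that the cross terms between $v_0$ and the concentrating spikes are genuinely negligible, so that the quasi-additive two-parameter Nehari equation is solvable uniformly in $\alpha$.
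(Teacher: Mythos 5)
The paper does not prove Theorem \ref{thm:GS} itself; it cites \cite[Lemma 2.3 (1) and Remark 2.4]{KQU}. The standard route (and the one the paper uses for the analogous borderline statements in Theorem \ref{NoGroundStates}) exploits the $0$-homogeneity of the fibered functional $\J$ in \eqref{eq:Poh} together with the coercivity estimate $E_\lambda(u)\ge\min\{1,\,1-\lambda/\lambda^*\}\,\|\nabla u\|_p^p$, valid on $\A^+$ when $\lambda<\lambda^*$; since the homogeneities in numerator and denominator of $\J$ cancel, this gives $M(\lambda)=\inf_{\A^+}\J>-\infty$ in one line. Your endgame for \ref{thm:GS:1} (nontriviality of $u^*$ via $L^q$-compactness and $\I(u_n)\to M(\lambda)<0$, admissibility of $u^*$ for $\lambda^*$ forcing $E_\lambda(u^*)>0$, and $t_*\ge1$ forcing $t_*=1$) is correct.

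However, the step you yourself single out as the most delicate in \ref{thm:GS:1} does not do what you claim. Dividing the Nehari identity by $\|\nabla u_n\|_p^q$ produces $\|\nabla u_n\|_p^{p-q}\bigl(1-\lambda\|v_n\|_p^p\bigr)=\intO a|v_n|^q\,dx$, which is literally the first division rearranged: the left-hand side is an indeterminate product $\infty\cdot 0$, and no conclusion $\intO a|v_0|^q\,dx=0$ follows from it. (It would follow if you already knew $\intO a|u_n|^q\,dx$ bounded, but that is equivalent to $M(\lambda)>-\infty$, which is part of what is being proved; as written your argument assumes it before establishing it.) The repair is easy and you should make it: all you need is $\intO a|v_0|^q\,dx\ge 0$ so that $v_0$ is admissible in \eqref{eq:lambda+*}, and this is immediate because $\intO a|u_n|^q\,dx=-\tfrac{pq}{p-q}\I(u_n)>0$ for large $n$, whence $\intO a|v_n|^q\,dx>0$ and the limit is $\ge0$. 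With that substitution the same normalization argument simultaneously rules out $M(\lambda)=-\infty$ and unbounded minimizing sequences, so the circularity disappears.

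For \ref{thm:GS:2} your construction is workable but considerably harder than necessary, and the point you defer ("cross terms are genuinely negligible") is the crux, not a formality: for $p\neq2$ there is no orthogonality, $\beta_n(\alpha)$ grows linearly in $\alpha$, so the cross terms in $\|\nabla(\alpha v_0+\beta w_n)\|_p^p$ scale like $\alpha^p$ times an $n$-dependent factor; showing that factor is $o_n(1)$ uniformly in $\alpha$ requires absolute continuity of $\int_{\mathrm{supp}\,w_n}|\nabla v_0|^p\,dx$ and a separate argument that the exact (non-additive) Nehari equation is still solvable in $\beta$. None of this is needed. Once you have $v_0\ge0$ with $E_\lambda(v_0)<0<\intO a v_0^q\,dx$, join it to a fixed bump $\phi\ge0$ supported in $\Omega_a^+$ with $E_\lambda(\phi)>0$ by the path $\xi(s)=\bigl((1-s)v_0^q+s\phi^q\bigr)^{1/q}$; then $\intO a\,\xi(s)^q\,dx$ is an affine function of $s$ bounded below by a positive constant, $E_\lambda(\xi(\cdot))$ changes sign, and as $s$ approaches the relevant zero of $E_\lambda\circ\xi$ from the side where $E_\lambda>0$ one gets $t_\lambda(\xi(s))\xi(s)\in\N$ with $\I(t_\lambda(\xi(s))\xi(s))=\J(\xi(s))\to-\infty$ by \eqref{eq:Poh}. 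This is exactly the device the paper uses in the proofs of Theorem \ref{NoGroundStates} \ref{NoGroundStates:3} and Theorem \ref{thm:2}, and it eliminates the concentration analysis entirely.
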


	Notice that $M(\lambda)$ can be also characterized as 
	$$
	{M}(\lambda)
	=
	{M}^+(\lambda)
	:=
	\inf\{ \I (u):~ u\in \N \cap\A^+ \},
	$$
	where 
	\begin{equation}\label{eq:A+}
		\A^+
		:=\left\{u \in \W:~ 
		\intO a |u|^q\,dx>0\,\right\},
	\end{equation}
	see Remark \ref{rem:m<0} below.
	In the following proposition, we collect a few qualitative results on the behavior of $M(\lambda)$.
	We refer the reader to Section \ref{sec:qualitative} for additional information on the behavior of $M(\lambda)$ and the corresponding minimizers.
	\begin{proposition}\label{prop:properties-m}
	\marginnote{{\scriptsize Proof on\\ p.\pageref{page:proof:properties-m}}}
		The extended function ${M}: \mathbb{R} \mapsto \mathbb{R} \cup \{-\infty\}$ has the following properties:
		\begin{enumerate}[label={\rm(\roman*)}]
			\item\label{prop:properties-m:1} ${M}$ is nonincreasing on $\mathbb{R}$ and decreasing on $(-\infty,\lambda^*]$.
			\item\label{prop:properties-m:3} ${M}$ is continuous on $(-\infty,\lambda^*)$.  
			\item\label{prop:properties-m:4}  $M(\lambda) \to M(\lambda^*)$ as $\lambda \to \lambda^*-0$.
		\end{enumerate}
	\end{proposition}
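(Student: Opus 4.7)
My plan is to first recast $M(\lambda)$ as the minimization of a single $0$-homogeneous functional over $\A^+$, and then read off assertions (i)--(iii) from this representation. For any $u\in\A^+$ and $\lambda<\lambda^*$, the definition of $\lambda^*$ forces $\E(u)>0$, so the fiber map $t\mapsto\I(tu)=\tfrac{t^p}{p}\E(u)-\tfrac{t^q}{q}\intO a|u|^q\,dx$ attains its unique nontrivial critical point at $t_\lambda(u)=\bigl(\intO a|u|^q\,dx\,/\,\E(u)\bigr)^{1/(p-q)}$, and a direct computation yields
\[
\I(t_\lambda(u)u)=-C\,G(u,\lambda), \qquad C:=\tfrac{1}{q}-\tfrac{1}{p}>0, \qquad G(u,\lambda):=\frac{\bigl(\intO a|u|^q\,dx\bigr)^{p/(p-q)}}{\E(u)^{q/(p-q)}}.
\]
Together with the identification $M(\lambda)=M^+(\lambda)=\inf\{\I(u):u\in\N\cap\A^+\}$ quoted in the text, this gives $M(\lambda)=-C\sup_{u\in\A^+}G(u,\lambda)$ for every $\lambda<\lambda^*$, with an analogous expression at $\lambda^*$ upon setting $G(u,\lambda^*):=+\infty$ when $E_{\lambda^*}(u)=0$.

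For (i), $\lambda\mapsto\E(u)$ is strictly decreasing for each fixed $u\neq 0$, hence $\lambda\mapsto G(u,\lambda)$ is strictly increasing wherever $\E(u)>0$. To upgrade the resulting weak monotonicity of $M$ to strict decrease on $(-\infty,\lambda^*)$, I would, for $\lambda_1<\lambda_2<\lambda^*$, take a minimizer $u_1$ of $M(\lambda_1)$ (existing and lying in $\A^+$ by Theorem~\ref{thm:GS}\ref{thm:GS:1}) and estimate $M(\lambda_1)=-CG(u_1,\lambda_1)>-CG(u_1,\lambda_2)\ge M(\lambda_2)$. Strict decrease up to $\lambda_2=\lambda^*$ is then a byproduct of (iii), and the nonincreasing property on all of $\mathbb{R}$ is automatic because $M\equiv-\infty$ on $(\lambda^*,+\infty)$ by Theorem~\ref{thm:GS}\ref{thm:GS:2}.

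For (ii), I would observe that $\lambda\mapsto G(u,\lambda)$ is convex on $(-\infty,\lambda^*)$ for each fixed $u\in\A^+$, being the composition of the convex function $x\mapsto x^{-q/(p-q)}$ on $(0,+\infty)$ with the affine map $\lambda\mapsto\E(u)$, multiplied by the nonnegative constant $\bigl(\intO a|u|^q\,dx\bigr)^{p/(p-q)}$. A pointwise supremum of convex functions is convex, so $-M$ is convex on $(-\infty,\lambda^*)$ and finite there by Theorem~\ref{thm:GS}\ref{thm:GS:1}; a finite convex function on an open interval is automatically continuous, which proves (ii).

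For (iii), monotonicity guarantees that $L:=\lim_{\lambda\to\lambda^*-}M(\lambda)$ exists in $[-\infty,0)$. Interchanging the suprema over $\lambda$ and $u$ and using monotone continuity of $G(u,\cdot)$ in $\lambda$ yields
\[
-L=C\sup_{u\in\A^+}\sup_{\lambda<\lambda^*}G(u,\lambda)=C\sup_{u\in\A^+}G(u,\lambda^*)=-M(\lambda^*),
\]
where the last equality applies the fibering representation at $\lambda^*$: finite values $G(u,\lambda^*)$ correspond via scaling to elements of $\mathcal{N}_{\lambda^*}\cap\A^+$ with $I_{\lambda^*}$-energy $-CG(u,\lambda^*)$, while the presence of any $u\in\A^+$ with $E_{\lambda^*}(u)=0$ (which may occur, e.g.\ when the infimum defining $\lambda^*$ is attained inside $\A^+$) forces $M(\lambda^*)=-\infty$ by perturbation to $\A^+$-functions with positive but arbitrarily small $E_{\lambda^*}$. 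The delicate point of the plan is precisely (iii), since the identification $-L=-M(\lambda^*)$ must be established uniformly across the ``finite'' and ``infinite'' regimes of $M(\lambda^*)$.
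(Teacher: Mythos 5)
Your proposal is correct, and for parts \ref{prop:properties-m:3} and \ref{prop:properties-m:4} it takes a genuinely different and more elementary route than the paper. For \ref{prop:properties-m:1} you argue essentially as the paper does on $(-\infty,\lambda^*)$ (evaluate $\J$ at a minimizer and use the strict monotonicity of $\lambda\mapsto\E(u)$); the only divergence is at the endpoint, where you deduce $M(\lambda)>M(\lambda^*)$ from \ref{prop:properties-m:4}, whereas the paper treats it directly by splitting on whether $E_{\lambda^*}(u_\lambda)=0$ and invoking Proposition \ref{prop:+} together with Theorem \ref{NoGroundStates} \ref{NoGroundStates:3} --- your reordering is logically sound since your proof of \ref{prop:properties-m:4} does not use the endpoint case of \ref{prop:properties-m:1}. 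For continuity, the paper goes through Proposition \ref{prop:behavior-gs} \ref{prop:behavior-gs:1}, i.e.\ compactness of sequences of ground states via Palais--Smale-type lemmas and the $(S_+)$-property; your observation that $-M$ is a pointwise supremum over $u\in\A^+$ of functions $\lambda\mapsto c_u\,\E(u)^{-q/(p-q)}$, each convex as a convex function of an affine map, replaces all of this by one line and yields the extra structural fact that $M$ is concave on $(-\infty,\lambda^*)$. For \ref{prop:properties-m:4}, the paper again uses the compactness machinery (Proposition \ref{prop:behavior-gs} \ref{prop:behavior-gs:2}, \ref{prop:behavior-gs:3}), which additionally describes the blow-up profile of minimizers when $M(\lambda^*)=-\infty$; your interchange of suprema plus the monotone convergence $E_\lambda(u)\downarrow E_{\lambda^*}(u)$ is cleaner but loses that qualitative information. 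The one step you should make explicit is the final identification $C\sup_{u\in\A^+}G(u,\lambda^*)=-M(\lambda^*)$ in the degenerate regime: a $u_0\in\A^+$ with $E_{\lambda^*}(u_0)=0$ exists precisely when $\intO a\varphi_p^q\,dx>0$ (Proposition \ref{prop:+}), in which case $u_0\in\mathbb{R}\varphi_p$ and the perturbation $\varphi_p+\varepsilon\theta$ with $\theta\notin\mathbb{R}\varphi_p$ gives $E_{\lambda^*}>0$ tending to $0$ with $\intO a|\varphi_p+\varepsilon\theta|^q\,dx$ bounded away from $0$; this is exactly the proof of Theorem \ref{NoGroundStates} \ref{NoGroundStates:3}, which you may simply cite since it is established independently of the present proposition.
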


	In view of the discussion on the global minimizers above, Theorem \ref{thm:GS} \ref{thm:GS:1} provides a nontrivial information if $\lambda_1(p) < \lambda^*$, i.e., when $\intO a\varphi_p^q\,dx< 0$.
	Moreover, if $\intO a\varphi_p^q\,dx< 0$, then for any $\lambda \in (\lambda_1(p),\lambda^*)$ there exists another nonnegative solution of \eqref{eq:P} which is positive in $\Omega_a^+$. 
	This solution has the least energy among all positive energy solutions and can be characterized as a minimizer of 
	\begin{equation}\label{eq:M-}
		{M}^-(\lambda)
		:=
		\inf\{ \I (u):~ u\in \N \cap\A^- \},
	\end{equation}
	where 
	\begin{equation}\label{eq:A-}
		\A^-
		:=\left\{u \in \W:~ 
		\intO a |u|^q\,dx<0\,\right\}.
	\end{equation}
	More precisely, the following result is given by \cite[Theorem 1.4 (2)]{KQU}.
	\begin{theorem}\label{prop:m-minus}
		Let $\intO a\varphi_p^q\,dx< 0$ and $\lambda \in (\lambda_1(p), \lambda^*)$. 
		Then ${M}^-(\lambda)>0$, it is attained, and there exists a nonnegative minimizer of ${M}^-(\lambda)$ which is positive in $\Omega_a^+$. 
		Moreover, 
		${M}^-(\lambda)$ coincides with a mountain pass value of $\I$ such that 
		$$
		{M}^-(\lambda)
		=\inf_{u\in \W\setminus\{0\}}\sup_{t>0}\, \I (tu). 
		$$
	\end{theorem}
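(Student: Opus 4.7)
The plan is to use the fibering map $\phi_u(t) := \I(tu)$ for $t>0$ to characterize the structure of $\N\cap\A^-$ and then to apply the direct method. For the mountain-pass identity and the lower bound $M^-(\lambda)\geq 0$: writing $\phi_u(t) = \frac{t^p}{p}\E(u) - \frac{t^q}{q}\intO a|u|^q\,dx$, one notes that $\lambda < \lambda^*$ forces $\E(u) > 0$ whenever $\intO a|u|^q\,dx \geq 0$, so $\sup_{t>0} \phi_u(t) < +\infty$ only for $u \in \A^-$ with $\E(u) < 0$. For such $u$ the fibering has a unique maximizer at $t_u = \bigl(\intO a|u|^q\,dx\,/\,\E(u)\bigr)^{1/(p-q)}$, giving $t_u u \in \N \cap \A^-$ and $\phi_u(t_u) = \bigl(\tfrac{1}{p} - \tfrac{1}{q}\bigr) t_u^q \intO a|u|^q\,dx > 0$. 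Conversely every $v \in \N\cap\A^-$ is such a maximizer with $t_v = 1$, which yields both the mountain-pass characterization and the bound $M^-(\lambda) \geq 0$. Non-emptiness of $\N\cap\A^-$ follows because $\lambda > \lambda_1(p)$ together with $\intO a\varphi_p^q\,dx < 0$ places a suitable scalar multiple of $\varphi_p$ into $\N\cap\A^-$.

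Next I take a minimizing sequence $\{u_n\} \subset \N \cap \A^-$ with $\I(u_n) \to M^-(\lambda)$; the identity $\I(u) = \bigl(\tfrac{1}{p}-\tfrac{1}{q}\bigr)\intO a|u|^q\,dx$ on $\N$ shows that $\intO a|u_n|^q\,dx$ and $\E(u_n)$ are bounded. Boundedness of $\{u_n\}$ in $\W$ is then obtained by a rescaling argument: if $\|\nabla u_n\|_p \to \infty$, then $w_n := u_n/\|\nabla u_n\|_p$ has a weak limit $w \neq 0$ (since $1-\lambda\|w_n\|_p^p \to 0$ and $\lambda > 0$) satisfying $\intO a|w|^q\,dx = 0$ and $\|\nabla w\|_p^p/\|w\|_p^p \leq \lambda < \lambda^*$, contradicting the definition of $\lambda^*$. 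Passing to a subsequence, $u_n \rightharpoonup u_0$ weakly in $\W$ and strongly in $L^p(\Omega)$ and $L^q(\Omega)$; since $\intO a|u_n|^q\,dx$ stays away from $0$, one has $u_0 \neq 0$ and $u_0 \in \A^-$.

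To obtain strong convergence and attainment, I argue by contradiction: if $\|\nabla u_0\|_p < \liminf \|\nabla u_n\|_p$, weak lower semicontinuity gives $\E(u_0) < \intO a|u_0|^q\,dx < 0$, so $\phi_{u_0}$ is maximized at some $t_0 \in (0,1)$, and then $t_0 u_0 \in \N\cap\A^-$ with $\I(t_0 u_0) = \bigl(\tfrac{1}{p}-\tfrac{1}{q}\bigr) t_0^q \intO a|u_0|^q\,dx < \bigl(\tfrac{1}{p}-\tfrac{1}{q}\bigr)\intO a|u_0|^q\,dx = \lim_n \I(u_n) = M^-(\lambda)$, contradicting the definition of $M^-$. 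Hence $u_n \to u_0$ strongly in $\W$, $u_0$ attains $M^-(\lambda)$, and $M^-(\lambda) > 0$.

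Replacing $u_0$ by $|u_0|$ keeps $\I$ and the constraint $\N\cap\A^-$ invariant, so a nonnegative minimizer exists; by the standard Lagrange-multiplier argument (valid since $\N$ is a $C^1$-manifold for $\lambda<\lambda^*$) it is a weak solution of \eqref{eq:P}, and by Remark~\ref{rem:reg} it lies in $C^{1,\beta}_0(\overline{\Omega})$. The main obstacle is positivity in $\Omega_a^+$: V\'azquez's strong maximum principle applied to $-\Delta_p u_0 \geq \lambda u_0^{p-1}$ on each connected component of $\Omega_a^+$ gives strict positivity on any component where $u_0$ does not vanish identically, so only the dead-core scenario $u_0 \equiv 0$ on some component $V \subset \Omega_a^+$ needs to be excluded. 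I would handle this by a competitor construction: perturb $u_0$ by a small nonnegative bump $\varepsilon\psi$ with $\psi \in C_0^\infty(V)$, project the perturbation onto $\N$ via the fibering map, and show through a careful expansion in $\varepsilon$ that the projected energy drops strictly below $M^-(\lambda)$, contradicting minimality.
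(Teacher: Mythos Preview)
The paper does not prove this theorem itself; it simply quotes the result from \cite[Theorem~1.4~(2)]{KQU}. Your direct-method proposal is the natural route and is essentially correct, but one step is circular as written and should be tightened.

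The assertion ``since $\intO a|u_n|^q\,dx$ stays away from $0$'' presupposes $M^-(\lambda)>0$: on $\N$ one has $\I(u_n)=-\frac{p-q}{pq}\intO a|u_n|^q\,dx$, so $\intO a|u_n|^q\,dx$ is bounded away from zero exactly when $M^-(\lambda)>0$, which is precisely what you want to conclude at the end of the paragraph. You should rule out $M^-(\lambda)=0$ \emph{before} the attainment step, and this can be done with the same rescaling device you already use for boundedness. Indeed, if $\I(u_n)\to 0$ along $u_n\in\N\cap\A^-$, then $\E(u_n)=\intO a|u_n|^q\,dx\to 0$. Either the weak limit $u_0$ of (a subsequence of) $\{u_n\}$ is nonzero, in which case $\intO a|u_0|^q\,dx=0$ and $\E(u_0)\le 0$ give a Rayleigh quotient $\le\lambda<\lambda^*$, contradicting the definition of $\lambda^*$; or $u_0=0$, whence $\|\nabla u_n\|_p\to 0$ and the normalized $v_n:=u_n/\|\nabla u_n\|_p$ satisfy $\E(v_n)<0$, so $\|v_n\|_p^p>1/\lambda$, and since $J_\lambda(v_n)=J_\lambda(u_n)\to 0$ with $|\E(v_n)|$ bounded one gets $\intO a|v_n|^q\,dx\to 0$; the weak limit $v_0\neq 0$ then yields the same contradiction. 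Once $M^-(\lambda)>0$ is secured, your compactness and strong-convergence argument goes through cleanly.

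Your competitor construction for positivity in $\Omega_a^+$ is correct, and the ``careful expansion in $\varepsilon$'' is not delicate here. Since $u_0\in C_0^{1,\beta}(\overline\Omega)$ vanishes identically on the component $V$, also $\nabla u_0\equiv 0$ on $V$, so $u_0$ and $\psi$ (and their gradients) have disjoint supports. Hence $\E(u_0+\varepsilon\psi)=\E(u_0)+\varepsilon^p\E(\psi)$ and $\intO a|u_0+\varepsilon\psi|^q\,dx=\intO a u_0^q\,dx+\varepsilon^q\intO a\psi^q\,dx$ exactly. In the fibered value $J_\lambda(u_0+\varepsilon\psi)$ the numerator $|\intO a|\cdot|^q\,dx|^{p/(p-q)}$ decreases at order $\varepsilon^q$ (because $\intO a\psi^q\,dx>0$), while the denominator $|\E|^{q/(p-q)}$ changes only at order $\varepsilon^p$; since $q<p$, the energy strictly drops for small $\varepsilon>0$, giving the desired contradiction with minimality.
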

	\begin{remark}
		Under the assumptions of Theorem \ref{prop:m-minus}, any minimizer of ${M}^-(\lambda)$ is a saddle point  of $\I$ (i.e., neither a local minimum nor local maximum). 
		The proof of this fact can be obtained along the same lines as the proof of \cite[Proposition 2.9 (ii)]{BobkovTanaka2017}.
	\end{remark}
	
	Let us collect a few qualitative results on the behavior of $M^-(\lambda)$, see also Section \ref{sec:qualitative} for further properties of $M^-(\lambda)$.
	\begin{proposition}\label{prop:PCV} 
	\marginnote{{\scriptsize Proof on\\ p.\pageref{page:proof:PCV}}}	
		Let $\intO a\varphi_p^q\,dx< 0$. 
		Then the following assertions hold:
		\begin{enumerate}[label={\rm(\roman*)}]	
			\item\label{prop:PCV:1} $M^-$ 
			is nonincreasing on $(\lambda_1(p),+\infty)$ and 
			decreasing on $(\lambda_1(p),\lambda^*]$. 
			\item\label{prop:PCV:3} $M^-$ is continuous on $(\lambda_1(p),\lambda^*)$.
			\item\label{prop:PCV:2} $M^-(\lambda) \to +\infty$ as $\lambda\to \lambda_1(p)+0$.  
			\item\label{prop:PCV:5} $M^-(\lambda) \to M^-(\lambda^*)$ as $\lambda\to \lambda^*-0$.
			\item\label{prop:PCV:4} $M^-(\lambda)=0$ for any $\lambda > \lambda^*$. 
		\end{enumerate}
	\end{proposition}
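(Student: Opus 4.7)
The plan is to exploit the identity $\I(u)=\frac{p-q}{pq}\bigl(-\intO a|u|^q\,dx\bigr)$, which holds on $\N\cap\A^-$ and in particular forces $M^-(\lambda)\geq 0$, together with an elementary fiber analysis: for any $u\in\A^-$ with $\E(u)<0$, the function $t\mapsto\I(tu)$ has a unique positive critical point $t_*(u,\lambda)=\bigl(\intO a|u|^q\,dx/\E(u)\bigr)^{1/(p-q)}$, which is a maximum, and $t_*(u,\lambda)\,u\in\N\cap\A^-$.

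For \ref{prop:PCV:1}, I would pick $\lambda_1<\lambda_2\leq\lambda^*$ and take a minimizer $v$ of $M^-(\lambda_1)$, which exists by Theorem \ref{prop:m-minus}. Since
\[
I_{\lambda_2}(tv)=I_{\lambda_1}(tv)-\frac{(\lambda_2-\lambda_1)t^p}{p}\|v\|_p^p<I_{\lambda_1}(tv)\quad\text{for all }t>0,
\]
and $E_{\lambda_2}(v)<E_{\lambda_1}(v)<0$ (so the $\lambda_2$-fiber still has a positive maximum), one obtains $M^-(\lambda_2)\leq I_{\lambda_2}(t_*(v,\lambda_2)v)<I_{\lambda_1}(v)=M^-(\lambda_1)$. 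Combined with \ref{prop:PCV:4} and $M^-\geq 0$, this yields the nonincrease on $(\lambda_1(p),+\infty)$. The upper semicontinuity halves of \ref{prop:PCV:3} and \ref{prop:PCV:5} would use the same idea, taking $v$ to be an $\varepsilon$-near minimizer of $M^-(\lambda)$: continuity of $\mu\mapsto t_*(v,\mu)$ and $\mu\mapsto I_\mu(t_*(v,\mu)v)$ in a neighborhood of $\lambda$ gives $\limsup_{\mu\to\lambda} M^-(\mu)\leq M^-(\lambda)+\varepsilon$. Lower semicontinuity is trivial from monotonicity on whichever side of $\lambda$ the function $M^-$ is larger; on the other side (for \ref{prop:PCV:3}) I would extract a bounded weak limit $u$ of minimizers $u_n$ of $M^-(\mu_n)$, rule out $u=0$ by a normalized rescaling of $u_n/\|\nabla u_n\|_p$ (using the defining property of $\lambda^*$), and exploit $\E(u)\leq\intO a|u|^q\,dx<0$ to pass to the limit via the identity.

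For \ref{prop:PCV:2} I would argue by contradiction: if $M^-(\lambda_n)\leq C$ along $\lambda_n\to\lambda_1(p)+$, the identity bounds $-\intO a|u_n|^q\,dx$ uniformly, while Nehari together with $u_n\in\A^-$ forces $R(u_n)\leq\lambda_n\to\lambda_1(p)$. If $\|u_n\|_p$ were unbounded, $u_n/\|u_n\|_p$ would be minimizing for $\lambda_1(p)$ and converge strongly to $\varphi_p/\|\varphi_p\|_p$ in $\W$, yielding $\intO a|u_n|^q\,dx\sim\|u_n\|_p^q\intO a\varphi_p^q\,dx\to-\infty$ by the hypothesis $\intO a\varphi_p^q\,dx<0$, which contradicts the bound. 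So $u_n$ is bounded, and any weak limit $u$ must satisfy $R(u)=\lambda_1(p)$ and $\intO a|u|^q\,dx=0$, forcing $u\propto\varphi_p$ and thus $\intO a\varphi_p^q\,dx=0$, again a contradiction; the degenerate case $u=0$ is ruled out by the same rescaling as in the lower semicontinuity step above.

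Finally, part \ref{prop:PCV:4} is the main obstacle. I would first establish, via the direct method and the compact embeddings $\W\hookrightarrow L^p,L^q$, that the infimum defining $\lambda^*$ is attained by some $w\in\W\setminus\{0\}$. The constraint is then necessarily active, $\intO a|w|^q\,dx=0$: otherwise $|w|$ would be a nonnegative free Rayleigh minimizer, hence $|w|\propto\varphi_p$ by simplicity of $\lambda_1(p)$, contradicting $\intO a\varphi_p^q\,dx<0$. Next, pick $\phi\in C_c^\infty(\Omega_a^-)$ with $\phi\geq 0$, $\phi\not\equiv 0$, and set $u_s:=w+s\phi$. A direct computation of $g(s):=\intO a|u_s|^q\,dx$ (assuming $w\geq 0$, WLOG) gives $g(s)<0$ for small $s>0$: either $g'(0)=q\intO aw^{q-1}\phi\,dx<0$ when $\{w>0\}\cap\mathrm{supp}\,\phi$ has positive measure, or $g(s)=s^q\intO a\phi^q\,dx<0$ when these sets are disjoint. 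Since $R(u_s)\to\lambda^*<\lambda$, $\E(u_s)$ stays negative and bounded away from zero, so $t_s^{p-q}=g(s)/\E(u_s)\to 0^+$. Hence $t_s u_s\in\N\cap\A^-$ and
\[
\I(t_s u_s)=\frac{p-q}{pq}\,t_s^q\,(-g(s))\to 0,
\]
which gives $M^-(\lambda)\leq 0$ and therefore $M^-(\lambda)=0$. The most delicate points in this argument are the attainment of $\lambda^*$ and the rigorous exclusion of an inactive constraint.
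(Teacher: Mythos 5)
Your proposal is correct in substance and, for parts \ref{prop:PCV:1}, \ref{prop:PCV:4}, and \ref{prop:PCV:5}, runs essentially parallel to the paper: the same fiber-map identity on $\N\cap\A^-$, the same monotone comparison via the maximum of $t\mapsto \I(tv)$, and for \ref{prop:PCV:4} the same mechanism of perturbing a minimizer of $\lambda^*$ with active constraint in a direction supported in $\Omega_a^-$ (the paper phrases this through $\lambda_0^*=\lambda^*$ and its minimizer $u_0$ from Proposition \ref{prop:+}, and splits into the cases where $u_0$ is or is not a critical point of $u\mapsto\intO a|u|^q\,dx$; your unified choice $\phi\in C_c^\infty(\Omega_a^-)$ covers both at once, modulo the small precision that the dichotomy should be taken over $\{\phi>0\}$ rather than $\operatorname{supp}\phi$). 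The genuine divergence is in \ref{prop:PCV:2}: the paper imports the $C^1$-convergence of suitably normalized minimizers to $\varphi_p$ from \cite[Theorem 4.1 (2)]{KQU} and reads off the blow-up from the explicit formula for $J_\lambda$, whereas your contradiction argument (uniform bound on $|\intO a|u_n|^q\,dx|$, Rayleigh quotients tending to $\lambda_1(p)$, and a normalized rescaling forcing $\intO a\varphi_p^q\,dx\geq 0$) is self-contained and avoids that external citation — a real gain in independence at the cost of a longer case analysis. Two places where your sketch leaves work that the paper does explicitly: (a) for the right-continuity in \ref{prop:PCV:3} you assert that the minimizers of $M^-(\mu_n)$ admit a bounded subsequence, but this must be proved (the paper's Proposition \ref{prop:behavior:m^-} does so via Lemma \ref{lem:bdd-PS}, which is exactly the normalization argument you deploy elsewhere, so the idea is available to you — it just needs to be invoked here too); and (b) in the bounded case of \ref{prop:PCV:2} with weak limit $u=0$, "the same rescaling" does work, but only after the quantitative identity $\|u_n\|_p^{p-q}\,(\lambda_n-R(u_n))=|\intO a|u_n/\|u_n\|_p|^q\,dx|$ is written down, since the qualitative statement that $u_n/\|u_n\|_p\to\varphi_p/\|\varphi_p\|_p$ alone does not produce the contradiction. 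Neither point is a wrong turn; both are completions of steps you have correctly identified.
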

	
	\begin{remark}\label{rem:positive-least-energy} 
		In the case $\intO a\varphi_p^q\,dx \ge 0$, it can be shown that for any $\lambda \in \mathbb{R}$ one has either $\N \cap\A^-=\emptyset$ (and hence $M^-(\lambda)$ is undefined) or $M^-(\lambda)=0$, see Lemma \ref{lem:NehariEmpty} and Remark \ref{rem:positive-least-energy_2}. 
	\end{remark}

	Propositions \ref{prop:properties-m} and \ref{prop:PCV}, together with other results from Section \ref{sec:qualitative}, supplement the information on $M^\pm$ from \cite[Section 4]{brown} and \cite[Section 4]{KQU}, and we refer the reader to these works for additional results on the attainability and qualitative properties of $M^\pm$ and their minimizers.
	Let us mention that there are several other definitions of the minimization problems ${M}^\pm$ equivalent to  \eqref{eq:M} and \eqref{eq:M-}.
	For instance, the authors of \cite{KQU} use the minimization over $\A^\pm$ (without explicit reference to $\N$), while the authors of \cite{QS} deal with definitions like \eqref{eq:M} and \eqref{eq:M-}, but containing the truncated integrals $\intO u_+^p \, dx$ and $\intO a u_+^q \, dx$ instead of their untruncated versions $\intO |u|^p \, dx$ and $\intO a |u|^q \, dx$.
	Here, $u_+ := \max\{u,0\}$.
	It is not hard to show that all such definitions coincide in the sense that they describe the same critical levels. 
	We give a few rigorous results in this direction in Section \ref{sec:relation}.

	\medskip
	It is important to remark that Theorem \ref{thm:GS} does not provide an answer to the attainability of $M(\lambda^*)$.
	In general, the information on existence of nonnegative solutions to \eqref{eq:P} in the supercritical spectral interval $[\lambda^*,+\infty)$ is very limited. 
	We refer the reader to \cite{kaji1,moroz} for the existence of abstract solutions to \eqref{eq:P} (without information on the sign) for all $\lambda$, and to \cite{DHI1} for the existence of nonnegative solutions with compact support in $\Omega_a^-$ for sufficiently large $\lambda$ (see Proposition \ref{prop:nonlipschitz} below).
	Several nontrivial results on the existence of nonnegative solutions in a right neighborhood of $\lambda^*$ have been obtained recently in \cite{QS}.
	The authors of \cite{QS} develop a theory applicable to general variational functionals consisted of two homogeneous parts obeying certain assumptions. 
	In the case of the problem \eqref{eq:P}, these assumptions are satisfied only when $\intO a \varphi_p^q \,dx <0$.
	The main aim of the present work is to contribute to the available theory of existence and multiplicity of nonnegative solutions to \eqref{eq:P} by studying in more detail the supercritical spectral interval $[\lambda^*,+\infty)$ for all signs of $\intO a \varphi_p^q \,dx$.
	In particular, we show that if $p>2q$ and either $\intO a \varphi_p^q \,dx =0$ or $\intO a \varphi_p^q \,dx >0$ is ``sufficiently small'', then the problem \eqref{eq:P} exhibits nontrivial existence and multiplicity phenomena which are impossible in the cases $q>p$ and $q<p=2$. 
	
	Precise statements of our main results are given in the following subsection.

	\subsection{Behavior in supercritical spectral interval. Statements of main results}\label{sec:mainresults}
	
	Let us recall that the critical value $\lambda^*$ defined by \eqref{eq:lambda+*} is the threshold dividing the existence and nonexistence of minimizers of $M(\lambda)$, see Theorem \ref{thm:GS}.
	Our first main result describes the attainability of $M(\lambda^*)$ with respect to the sign of $\intO a\varphi_p^q\,dx$. 
	The second assertion of this theorem gives the most essential contribution.
	
	\begin{theorem}\label{NoGroundStates}
		\marginnote{{\scriptsize Proof on\\ p.\pageref{sec:proof:NoGroundS:1}}}
		The following assertions hold: 
		\begin{enumerate}[label={\rm(\roman*)}]
			\item\label{NoGroundStates:1} Let $\intO a\varphi_p^q\,dx< 0$. 
			Then $M(\lambda^*) \in (-\infty,0)$ and it is attained. 
			\item\label{NoGroundStates:2} Let $\intO a\varphi_p^q\,dx= 0$. 
			Assume that $\partial \Omega$ is connected provided $p \geq 2q$ and $N\ge 2$. 
			Then $M(\lambda^*) \in (-\infty,0)$ if and only if $p \geq 2q$. 
			Furthermore, if $p > 2q$, 
			then $M(\lambda^*)$ is attained. 
			\item\label{NoGroundStates:3} Let $\intO a\varphi_p^q\,dx> 0$.
			Then $M(\lambda^*)=-\infty$. 
		\end{enumerate} 
	\end{theorem}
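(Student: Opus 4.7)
The plan is to handle the three assertions separately, relying in each on the scaling identity that, for $u\in\A^+$ with $E_{\lambda^*}(u)>0$, the unique $t>0$ with $tu\in\mathcal{N}_{\lambda^*}$ satisfies
\[
I_{\lambda^*}(tu) \;=\; \left(\tfrac{1}{p}-\tfrac{1}{q}\right)\frac{\left(\intO a|u|^q\,dx\right)^{p/(p-q)}}{E_{\lambda^*}(u)^{q/(p-q)}}.
\]
Consequently, $M(\lambda^*)=-\infty$ exactly when the ratio $\intO a|u|^q\,dx/E_{\lambda^*}(u)^{q/p}$ is unbounded on $\A^+$, whereas finiteness of $M(\lambda^*)$ amounts to a uniform bound on this ratio.

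For \ref{NoGroundStates:3}, since $\intO a\varphi_p^q\,dx>0$ forces $\lambda^*=\lambda_1(p)$ and hence $E_{\lambda^*}(\varphi_p)=0$, I would take $v_n:=\varphi_p+\tfrac{1}{n}w$ with $w\in\W\setminus\mathbb{R}\varphi_p$: by simplicity of $\lambda_1(p)$, $E_{\lambda^*}(v_n)>0$ and $E_{\lambda^*}(v_n)\to 0$, while $\intO a|v_n|^q\,dx\to\intO a\varphi_p^q\,dx>0$, so the scaling identity produces $I_{\lambda^*}(t_n v_n)\to-\infty$. For \ref{NoGroundStates:2} with $p<2q$, the same idea is refined with $v_\epsilon:=\varphi_p+\epsilon w$, where $w\in\C$ is chosen so that $\intO a\varphi_p^{q-1}w\,dx>0$ (possible since $a\varphi_p^{q-1}\not\equiv 0$). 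A first-order Taylor expansion gives $\intO a|v_\epsilon|^q\,dx\sim q\epsilon\intO a\varphi_p^{q-1}w\,dx$, while the vanishing of the first variation of $E_{\lambda^*}$ at $\varphi_p$ combined with a second-order estimate yields $E_{\lambda^*}(v_\epsilon)\leq C\epsilon^2$. Substituting into the scaling identity, $I_{\lambda^*}(t_\epsilon v_\epsilon)\sim C_0\,\epsilon^{(p-2q)/(p-q)}\to-\infty$ as $\epsilon\to 0^+$ precisely when $p<2q$.

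For \ref{NoGroundStates:1}, existence is obtained by approximating from below. Take $\lambda_n\uparrow\lambda^*$ and nonnegative minimizers $u_n\in\mathcal{N}_{\lambda_n}$ (Theorem~\ref{thm:GS}\ref{thm:GS:1}); Proposition~\ref{prop:properties-m}\ref{prop:properties-m:4} gives $I_{\lambda_n}(u_n)\to M(\lambda^*)$. The key step is a uniform $W_0^{1,p}$-bound on $\{u_n\}$. Suppose $\|u_n\|_p\to\infty$ along a subsequence; normalizing $v_n:=u_n/\|u_n\|_p$, the Nehari identity and the Sobolev embedding yield $\|\nabla v_n\|_p^p\to\lambda^*$, and one extracts a strong $W_0^{1,p}$-limit $v\geq 0$ with $\|v\|_p=1$ and $\|\nabla v\|_p^p=\lambda^*$. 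Dividing the equation for $u_n$ by $\|u_n\|_p^{p-1}$ and letting $n\to\infty$ (using $q<p$) yields $-\Delta_p v=\lambda^* v^{p-1}$, so the strong maximum principle forces $v>0$; hence $v\in\mathbb{R}\varphi_p$ and $\lambda^*=\lambda_1(p)$, contradicting $\intO a\varphi_p^q\,dx<0$. With $\{u_n\}$ bounded, $M(\lambda^*)>-\infty$ follows automatically, and $M(\lambda^*)<0$ from Proposition~\ref{prop:properties-m}; weak $W_0^{1,p}$-convergence to some $u^*$, compactness of the $L^q$-term, passage to the limit in the Nehari identity, and convergence of the gradient norms then yield a minimizer $u^*\in\mathcal{N}_{\lambda^*}$ with $I_{\lambda^*}(u^*)=M(\lambda^*)$.

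For \ref{NoGroundStates:2} with $p\geq 2q$, the strict monotonicity and continuity of $M$ on $(-\infty,\lambda^*]$ (Proposition~\ref{prop:properties-m}) give $M(\lambda^*)<0$, while finiteness requires a sharp global bound of the form
\[
\left(\intO a|u|^q\,dx\right)^p \;\leq\; C\,E_{\lambda^*}(u)^q\,\|u\|_p^{q(p-q)},
\]
matching the degeneracy of $E_{\lambda^*}$ at $\varphi_p$ exactly at the threshold $p=2q$. I would derive this from a second-order Taylor expansion about $\mathbb{R}\varphi_p$ combined with Picone-type inequalities; the connectedness assumption on $\partial\Omega$ (when $N\geq 2$) enters through the Hopf lemma to control $|\nabla\varphi_p|$ uniformly near $\partial\Omega$, essential for the sharp expansion in the borderline regime. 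Attainability when $p>2q$ then follows by the direct method: the inequality is strict, a minimizing sequence is bounded, and a Brezis-Lieb-type argument together with compactness of the $L^q$-term upgrades weak to strong convergence. The principal obstacle is precisely this sharp inequality at the borderline $p=2q$, which balances the first-order growth of $\intO a|u|^q\,dx$ against the second-order growth of $E_{\lambda^*}$ around $\varphi_p$ and dictates both the exponent condition and the boundary assumption.
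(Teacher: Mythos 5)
Your treatment of assertions \ref{NoGroundStates:1} and \ref{NoGroundStates:3}, and of the ``only if'' half of \ref{NoGroundStates:2} (the case $p<2q$), follows essentially the paper's own route: approximation by ground states at $\lambda_n\uparrow\lambda^*$ with a normalization/eigenfunction argument for the uniform bound in \ref{NoGroundStates:1}, and perturbations $\varphi_p+\varepsilon\theta$ fed into the fibered functional for \ref{NoGroundStates:3} and for $p<2q$. One small caveat there: the bound $E_{\lambda^*}(\varphi_p+\varepsilon\theta)\le C\varepsilon^2$ via a ``second-order Taylor expansion'' is not available for $1<p<2$ (the correct elementary estimate gives $C\varepsilon^p$); this does not affect the conclusion since $p\le 2$ forces $p<2q$ anyway, but the paper is careful to split the two regimes using the standard monotonicity inequalities for $|x|^{p-2}x$.

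The genuine gap is in the ``if'' half of \ref{NoGroundStates:2}: finiteness of $M(\lambda^*)$ for $p\ge 2q$ and attainability for $p>2q$. First, the inequality you propose, $\bigl(\intO a|u|^q\,dx\bigr)^p\le C\,E_{\lambda^*}(u)^q\,\|u\|_p^{q(p-q)}$, is not scale-invariant (the left side is $pq$-homogeneous, the right side is $q(2p-q)$-homogeneous), and on the Nehari set it reduces to the trivial bound $\intO a|u|^q\,dx\le C\|u\|_p^q$, which yields nothing; what is actually needed is $\bigl(\intO a|u|^q\,dx\bigr)^p\le C\,E_{\lambda^*}(u)^q$ along sequences degenerating to $\mathbb{R}\varphi_p$. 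Second, and more importantly, the plan ``derive this from a second-order Taylor expansion about $\mathbb{R}\varphi_p$ combined with Picone-type inequalities, with $\partial\Omega$ connected entering through the Hopf lemma'' does not constitute a proof. The functional $E_{\lambda_1(p)}$ is not twice differentiable at $\varphi_p$ in any way that yields a coercive second variation for $p\neq 2$; the lower bound that makes the argument work is the improved Poincar\'e inequality of Fleckinger--Tak\'a\v{c}, $E_{\lambda_1(p)}(\gamma\varphi_p+v)\ge C\bigl(|\gamma|^{p-2}\|v\|_2^2+\|v\|_p^p\bigr)$ for $v$ orthogonal to $\varphi_p$ in $L^2$, whose hypothesis is precisely the connectedness of $\partial\Omega$. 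This must be paired with the first-order estimate $\intO a|\gamma\varphi_p+v|^q\,dx\le C\|v\|_2$, which uses $\intO a\varphi_p^q\,dx=0$ together with a H\"older step valid only because $2(q-1)\le p$; the exponent count $\|v\|_2^{p/(p-q)}/\|v\|_2^{2q/(p-q)}$ is then bounded exactly when $p\ge 2q$. You also omit the reduction showing that any sequence with $J_{\lambda^*}(u_n)\to-\infty$ (after normalization) must converge strongly to $\varphi_p$, and the boundedness of a minimizing sequence for the attainability claim when $p>2q$ rests on the same lemma (an unbounded minimizing sequence would force $I_{\lambda^*}(u_n)\to 0$, contradicting $M(\lambda^*)<0$); neither step is addressed. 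Since this threshold $p\ge 2q$ is the substantive content of the theorem, the proposal as written does not prove it.
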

	\begin{remark}\label{rem:NoGS}
		The assertion \ref{NoGroundStates:1} of Theorem \ref{NoGroundStates} can be found in \cite[Corollary 3.5]{QS} under slightly different, but same in essence, assumptions on \eqref{eq:P}. 
		The assertion \ref{NoGroundStates:3} of Theorem~\ref{NoGroundStates} is in agreement with \cite[Theorem 4]{brown}.
	\end{remark}
	
	\begin{remark} 
		Recall that $\lambda^*=\lambda_1(p)$ whenever $\intO a \varphi_p^q \,dx \geq 0$, see Proposition \ref{prop:+}.
		Thus, a ground state at $\lambda=\lambda_1(p)$ provided by
		Theorem \ref{NoGroundStates} \ref{NoGroundStates:2} for $p>2q$ is the global minimum point of $ \I$, 
		see Proposition \ref{prop:charact}. 
		The attainability of  $M(\lambda^*)$ in the case $\intO a\varphi_p^q\,dx= 0$ and $p=2q$ remains unknown to us.
	\end{remark} 
	
	The main question arising from Theorems \ref{thm:GS} and \ref{NoGroundStates} is whether $\lambda^*$ is a terminal point for the existence of nonnegative ground states and nonnegative solutions which are positive in $\Omega_a^+$, or these solutions can be obtained in a right neighborhood of $\lambda^*$.
	Let us mention, for comparison, that in the superhomogeneous regime $q>p$ the answer depends solely on the sign of $\intO a \varphi_p^q \, dx$.
	Namely, $\lambda^*$ \textit{is} the terminal point if and only if $\intO a \varphi_p^q \, dx \geq 0$, see, e.g., \cite{ilyasov2}.
	The situation in the subhomogeneous regime $q<p$ appears to be more delicate since not only the sign of $\intO a \varphi_p^q \, dx$ matters, but also a nontrivial relation between the exponents $p$ and $q$ starts to play a significant role.
	In Theorems \ref{thm:1}, \ref{thm:3} and Theorem \ref{NoSol} below, we provide two groups of opposite results in this direction, for different relations between $p$ and $q$. These theorems are among our main results.

	Since our primary interest concerns nonnegative solutions of \eqref{eq:P}, it will be convenient to work with the following \textit{truncated} energy functional:
	\begin{equation}\label{def:functionals-t} 
		\wI (u):=\frac{1}{p}\,\widetilde{E}_\lambda(u)-\frac{1}{q}\intO a u_+^q\,dx,
		\quad \text{where} \quad 
		\widetilde{E}_\lambda(u):=\|\nabla u\|_p^p-\lambda \|u_+\|_p^p.
	\end{equation} 
	Here, we denote $u_\pm =\max\{\pm u, 0\}$, i.e., $u = u_+-u_-$.
	Notice that $ \wI \in C^1(\W,\mathbb{R})$ and $\wI$ is weakly lower semicontinuous.
	Any critical point $u \in \W$ of $ \wI$ is a \textit{nonnegative} solution to \eqref{eq:P}, i.e., $u_- \equiv 0$ in $\Omega$, which follows from the fact that 
	$
	0 
	= 
	\langle \wI'(u),u_-\rangle
	=
	-\|\nabla u_-\|_p^p.
	$
	We consider the corresponding truncated Nehari manifold: 
	\begin{equation*}\label{eq:nehari-t}
		\wN :=\left\{u \in \W \setminus\{0\}:~ 
		\wE(u)=\intO a u_+^q\,dx\,\right\},
	\end{equation*}
	and denote by $\widetilde{M}(\lambda)$ the minimal level of $\wI$ over $\wN$, i.e.,\footnote{Throughout this work, the diacritic ``tilde'' over a capital letter always corresponds to the presence of the truncated integrals $\intO u_+^p\,dx$ and $\intO a u_+^q\,dx$ instead of their untruncated counterparts $\intO |u|^p\,dx$ and $\intO a |u|^q\,dx$.}
	\begin{equation*}\label{eq:wM}
		\widetilde{M}(\lambda)
		:=
		\inf\{ \wI (u):~ u\in \wN \}.
	\end{equation*}
	Hereinafter, we will use the following notion, cf.\ Definition \ref{def:ground}.
	\begin{definition}\label{def:Ileast}
		We say that a nonzero critical point $u$ of $\wI$ is a \textit{least $\wI$-energy solution} of \eqref{eq:P} if $\wI(u) \leq  \wI(v)$ for any nonzero critical point $v$ of $\wI$. 
	\end{definition}
	\vspace{-7pt}
	\noindent
	In other words, a least $\wI$-energy solution is 
	a {\it nonnegative} solution with the least energy 
	{\it among all nonnegative solutions}, since 
	any nonnegative solution is a critical point of $\wI$. 
	
	\medskip
	The difference between this notion and the notion of ground state of \eqref{eq:P} (given by Definition \ref{def:ground} via the untruncated functional $\I$) is subtle. 
	Evidently, $\I$ has more critical points than $\wI$ since it also includes sign-changing solutions to \eqref{eq:P}. 
	In particular, while least $\wI$-energy solutions are always nonnegative, there might exist sign-changing ground states of \eqref{eq:P}, see the discussion at the beginning of the paper. 
	Nevertheless, we show in Proposition~\ref{prop:wm=m} below that these two notions correspond to the same critical level at least when either $M(\lambda)$ or $\widetilde{M}(\lambda)$ is attained.
	We provide several other results on the relation between $\I$ and $\wI$ in Section~\ref{sec:relation}.

	The attainability of $M(\lambda^*)$ described in Theorem \ref{NoGroundStates} \ref{NoGroundStates:1}, \ref{NoGroundStates:2} is a cornerstone for the following two main theorems which contain a nontrivial multiplicity information.
	
	\begin{theorem}\label{thm:1}
		\marginnote{{\scriptsize Proof on\\ p.\pageref{sec:thm1}}}
		Let one of the following assumptions be satisfied:
		\begin{enumerate}[label={\rm(\Roman*)}]
			\item\label{thm:1:1} $\intO a\varphi_p^q\,dx<0$.
			\item\label{thm:1:2} $\intO a\varphi_p^q\,dx=0$, $p>2q$, and $\partial\Omega$ is connected provided $N \geq 2$.
		\end{enumerate}
		Then there exist 
		$\Lambda$ and $\Lambda^*$ such that $\lambda^*<\Lambda^* \leq \Lambda < +\infty$ and 
		the following assertions hold:
		\begin{enumerate}[label={\rm(\roman*)}]
			\item\label{thm:1:r1}
			Let $\lambda \in (\lambda^*, \Lambda)$.
			Then \eqref{eq:P} possesses a nonnegative solution $u_\lambda$ such that $u_\lambda>0$ in $\Omega_a^+$ and $\wI(u_\lambda) < 0$.
			Moreover, \eqref{eq:P} possesses a least $\wI$-energy solution $w_\lambda$ such that $w_\lambda>0$ in some connected component of $\Omega_a^+$, and $\wI(w_\lambda) \leq \wI(u_\lambda) < 0$.
			\item\label{thm:1:r2}
			Let $\lambda \in (\lambda^*, \Lambda^*)$. 
			Then $u_\lambda$ is a local minimum point of $\wI$, and \eqref{eq:P} possesses another nonnegative solution $v_\lambda$ ($v_\lambda \neq u_\lambda, w_\lambda$) such that $v_\lambda>0$ in some connected component of $\Omega_a^+$, $v_\lambda$ is a mountain pass critical point of $\wI$, and $\wI(w_\lambda) \leq \wI(u_\lambda) < \wI(v_\lambda) < 0$.
			\item\label{thm:1:r4}
			Let $\lambda > \Lambda$.
			Then \eqref{eq:P} possesses no nonnegative solution which is positive in $\Omega_a^+$.
		\end{enumerate}
	\end{theorem}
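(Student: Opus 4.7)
The plan is to use the minimizer $u_{\lambda^*}$ of $M(\lambda^*)$, whose existence and negative energy are guaranteed by Theorem~\ref{NoGroundStates}\ref{NoGroundStates:1}--\ref{NoGroundStates:2}, as an anchor from which to continue solutions $u_\lambda$ into a right neighborhood of $\lambda^*$. Proposition~\ref{prop:charact} applied at $\lambda=\lambda^*$ gives that $u_{\lambda^*}$ is a local minimum point of $I_{\lambda^*}$ that is strictly of one sign in $\Omega_a^+$; since $I_{\lambda^*}(u)=I_{\lambda^*}(|u|)$, we may assume $u_{\lambda^*}\ge 0$, and then it is also a local minimum of $\widetilde{I}_{\lambda^*}$ with $\widetilde{I}_{\lambda^*}(u_{\lambda^*})<0$.

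For assertion~\ref{thm:1:r1}, I would fix a ball $B_R\subset\W$ with $u_{\lambda^*}$ well inside and consider the constrained infimum $\widetilde{m}_R(\lambda):=\inf\{\wI(u):\|u\|_{1,p}\le R\}$. Weak lower semicontinuity of $\wI$ on $\overline{B}_R$ yields a minimizer $u_\lambda$ with $\wI(u_\lambda)\to \widetilde{I}_{\lambda^*}(u_{\lambda^*})<0$ as $\lambda\to\lambda^*+0$. The key step is to verify that $u_\lambda$ lies in the interior of $B_R$, so that it is an unconstrained critical point of $\wI$. This should follow from a fibering analysis of $t\mapsto \wI(tu)$: on each ray with $\intO a u_+^q\,dx>0$ and $\wE(u)>0$ the function has a unique strict minimum, while on rays with $\wE(u)<0$ it tends to $-\infty$; quantifying this shows that any would-be boundary minimizer can be pushed inward with strictly lower energy provided $\lambda$ is sufficiently close to $\lambda^*$. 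Positivity $u_\lambda>0$ in $\Omega_a^+$ follows from $C^{1,\beta}$-regularity (Remark~\ref{rem:reg}), convergence $u_\lambda\to u_{\lambda^*}$ in $\C$ along a subsequence via standard compactness, and the strong maximum principle applied inside $\Omega_a^+$. The least $\wI$-energy solution $w_\lambda$ is then produced by direct minimization over the nonzero critical set of $\wI$---nonempty since it contains $u_\lambda$---with boundedness of any minimizing sequence coming from the Nehari-type identity $p\wI(u)-\langle \wI'(u),u\rangle=(p/q-1)\intO a u_+^q\,dx$ together with the fibering; positivity of $w_\lambda$ in some connected component of $\Omega_a^+$ follows componentwise from the strong maximum principle.

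For assertion~\ref{thm:1:r2}, with $u_\lambda$ a local minimum and $\wI(u_\lambda)<0=\wI(0)$, the functional $\wI$ exhibits mountain-pass geometry because $\wI(T\varphi_p)\to-\infty$ as $T\to+\infty$ when $\lambda>\lambda^*\ge \lambda_1(p)$. I would then apply the mountain-pass theorem to obtain a third nonnegative solution $v_\lambda$ with $\wI(v_\lambda)>\wI(u_\lambda)$: the Palais--Smale condition reduces, via the Nehari identity above, to boundedness of PS sequences at negative levels, followed by the $(S_+)$ property of $-\Delta_p$ to extract a strongly convergent subsequence whose limit is nonzero because its energy level is strictly negative. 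The strict inequality $\wI(v_\lambda)<0$---which guarantees $v_\lambda\ne 0,u_\lambda,w_\lambda$---is the most delicate estimate: it requires exhibiting an explicit connecting path whose maximum energy stays strictly below zero, and this is precisely where the hypothesis $p>2q$ under~\ref{thm:1:2} enters to close the quantitative bookkeeping. $\Lambda^*$ is then defined as the supremum of $\lambda>\lambda^*$ for which the full mountain-pass argument succeeds.

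For assertion~\ref{thm:1:r4} I would set $\Lambda$ to be the supremum of $\lambda\in\mathbb{R}$ for which \eqref{eq:P} admits a nonnegative solution positive in $\Omega_a^+$, so that nonexistence above $\Lambda$ is tautological. Finiteness $\Lambda<+\infty$ follows from a Picone-type inequality: testing \eqref{eq:P} with $\varphi_p^p/(u+\varepsilon)^{p-1}$, passing to $\varepsilon\to 0$, and using $a<0$ on $\Omega_a^-$ yields an upper bound on $\lambda$ depending only on $\Omega$ and $a$. The principal obstacles I foresee are: (a)~showing that the constrained minimizer $u_\lambda$ is strictly interior, since $\wI$ is unbounded below globally for $\lambda>\lambda^*$; (b)~establishing the Palais--Smale condition at the mountain-pass level; and (c)~maintaining the mountain-pass level strictly below zero, where the sharpness of $p>2q$ in case~\ref{thm:1:2} becomes crucial.
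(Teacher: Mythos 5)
Your plan hinges on anchoring at a single minimizer $u_{\lambda^*}$ of $M(\lambda^*)$ and recovering, by constrained minimization over a ball, an interior minimizer of $\wI$ for $\lambda$ slightly above $\lambda^*$. This step has a genuine gap: $u_{\lambda^*}$ is a local minimum of $\widetilde I_{\lambda^*}$, but nothing guarantees it is a \emph{strict} (or isolated) local minimum, so there is no strict energy gap between $u_{\lambda^*}$ and the boundary of a small ball around it, and the persistence-under-perturbation argument collapses; moreover, for a large ball (needed to contain $u_{\lambda^*}$ ``well inside'') the constrained infimum in case \ref{thm:1:1} is attained \emph{on} the boundary in the $\varphi_p$-direction, since $\wI(t\varphi_p)\to-\infty$ already for $\lambda>\lambda_1(p)$, so the constrained minimizer is unrelated to $u_{\lambda^*}$ and its energy does not converge to $\widetilde I_{\lambda^*}(u_{\lambda^*})$. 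The paper circumvents exactly this by working with the full compact set $K^*$ of nonnegative minimizers of $\widetilde M(\lambda^*)$, proving $\widetilde M(\lambda^*)<\inf_{\partial K^*_\delta}\widetilde I_{\lambda^*}$ (Lemma \ref{lem:3}) and then perturbing in $\lambda$ uniformly on the compact set $K^*_\delta$; this is the missing key idea. A second gap: with your definition of $\Lambda$ as the supremum of the existence set, assertion \ref{thm:1:r1} requires a solution positive in $\Omega_a^+$ for \emph{every} $\lambda\in(\lambda^*,\Lambda)$, and your construction only covers a small right neighborhood of $\lambda^*$; the paper fills the whole interval by a sub/supersolution argument (a solution at some $\bar\lambda>\lambda$ is a supersolution at $\lambda$, $0$ is a subsolution, and one minimizes over the order interval via Struwe's theorem). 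Also, boundedness of a minimizing sequence of nonnegative solutions does not follow from the Nehari identity you quote (it only bounds $\wE$ and $\intO a u_+^q\,dx$, not the gradient norm); the correct mechanism is that unbounded sequences force $\lambda=\lambda_1(p)$ (Lemma \ref{lem:bdd-PS}), excluded since $\lambda>\lambda^*\geq\lambda_1(p)$.

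Two further points go wrong in \ref{thm:1:r2} and \ref{thm:1:r4}. The strict negativity of the mountain-pass level is not where $p>2q$ enters: in the paper it is obtained for \emph{all} $\lambda>\lambda^*$ (both cases \ref{thm:1:1} and \ref{thm:1:2}) via the explicit path $\xi(s)=((1-s)u^q+s\,\omega^q)^{1/q}$, which keeps $\intO a\,\xi(s)_+^q\,dx>0$ by linearity in $s$, followed by the fibering projection $t_\lambda(\xi(s))\xi(s)$ and, if $\widetilde E_\lambda$ vanishes along the path, the observation that $J_\lambda(\xi(s))\to-\infty$ there; the hypothesis $p>2q$ is used only earlier, to make $M(\lambda^*)$ finite and attained in case \ref{thm:1:2} (Theorem \ref{NoGroundStates} \ref{NoGroundStates:2}), i.e.\ to produce the anchor set $K^*$ at all. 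You also need the Pucci--Serrin version of the mountain-pass theorem (or the strict separation encoded in the definition \eqref{eq:Lambda*} of $\Lambda^*$) to guarantee $v_\lambda\neq u_\lambda$ when the local minimum is not strict. Finally, your finiteness argument for $\Lambda$ fails as stated: testing with $\varphi_p^p/(u+\varepsilon)^{p-1}$ globally, the term $\int_{\Omega_a^-} a\,u^{q-p}\varphi_p^p\,dx$ is negative and unbounded in terms of $a$ and $\Omega$ alone, so ``using $a<0$ on $\Omega_a^-$'' works in the wrong direction and yields no bound on $\lambda$. The correct Picone argument localizes to $\Omega_a^+$: test with $\varphi^p/u^{p-1}$, $\varphi\in C_0^\infty(\Omega_a^+)$ nonnegative, where $u\geq c>0$ on $\operatorname{supp}\varphi$ and $a\geq0$ lets the weight term be dropped, giving $\Lambda\leq\lambda_1(p;\Omega_a^+)<+\infty$ as in Proposition \ref{prop:bounded}.
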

	
	\begin{remark}
		We anticipate that the solutions $u_\lambda$ and $w_\lambda$ obtained in Theorem \ref{thm:1} \ref{thm:1:r1} coincide, at least for  $\lambda$ close to $\lambda^*$.
		Moreover, we expect that $w_\lambda$ and $v_\lambda$ are positive in the whole $\Omega_a^+$.
		See Figure \ref{fig:bifur} for the graphical representation of the results of Theorem \ref{thm:1}.
	\end{remark}

	\begin{figure}[h!]
		\center{
			\includegraphics[width=0.55\linewidth]{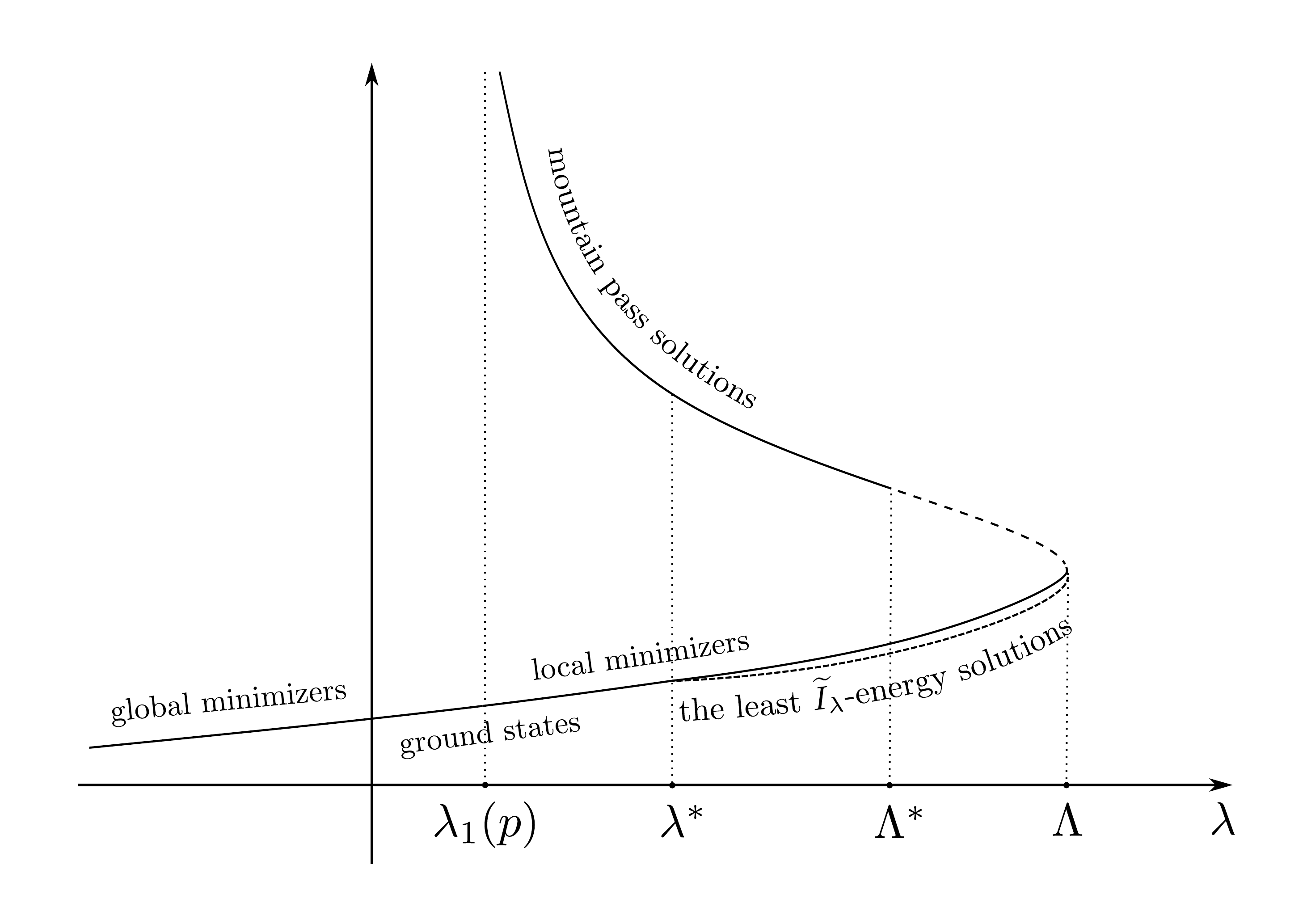}
		}
		\caption{A schematic $L^\infty(\Omega)$-bifurcation diagram provided by Theorems \ref{thm:GS} and \ref{thm:1} under the assumption $\intO a \varphi_p^q \,dx < 0$.}
		\label{fig:bifur}
	\end{figure}
	
	\begin{remark}
		We do not know whether the equality $\Lambda^* = \Lambda$ always takes place, or the inequality $\Lambda^* < \Lambda$ can happen in some regimes. 
		In \cite{Alama1}, in the linear case $p=2$ and under the Neumann boundary conditions, the author obtains the equality under several specific assumptions on the weight $a$, namely, that the set $\text{Int} (\Omega_a^+ \cup \Omega_a^0)$ has a finite number of connected components, each of which is $C^2$-smooth and connected to $\Omega_a^+$, see \cite[(1.5)-(1.7)]{Alama1}. 
		The proofs of \cite{Alama1} rely in a principal way on the strong comparison principle which is known to be a difficult (and, in many essential cases, open) issue in the general nonlinear case $p>1$, see, e.g., \cite{CT}.
		The problem is compounded by the fact that nonnegative solutions of \eqref{eq:P} do not necessarily obey the Hopf maximum principle, which makes it hard to apply approaches known for ``good'' nonlinear cases.
		Thus, the establishment of the equality $\Lambda^* = \Lambda$ under some assumptions for $p>1$ or under significantly weaker assumptions than \cite[(1.5)-(1.7)]{Alama1} for $p=2$, or the construction of examples when $\Lambda^* < \Lambda$, are interesting open problems.
	\end{remark}
	
	Take now any nonnegative $b \in C(\overline{\Omega}) \setminus \{0\}$ and define the weight $a_\mu := a + \mu b$ for $\mu > 0$.
	Consider the boundary value problem analogous to \eqref{eq:P}:
	\begin{equation}\label{eq:Pm}
		\tag{$P_{\lambdaM}^{\mu}$}
		\left\{
		\begin{aligned}
			-\Delta_p u &= \lambda |u|^{p-2}u+ a_\mu(x)|u|^{q-2}u 
			&&\text{in}\ \Omega, \\
			u&=0 &&\text{on}\ \partial \Omega,
		\end{aligned}
		\right.
	\end{equation}
	and denote by $\wIm$ the corresponding truncated energy functional, i.e., \eqref{def:functionals-t} with $a_\mu$ instead of $a$.
	Clearly, if $\intO a \varphi_p^q \,dx=0$, then $\intO a_\mu \varphi_p^q \,dx>0$.
	Moreover, we have $\Omega_{a}^+ \subset \Omega_{a_\mu}^+$.
	
	\begin{theorem}\label{thm:3}
		\marginnote{{\scriptsize Proof on\\ p.\pageref{sec:thm3}}}
		Let $p>2q$, $\partial\Omega$ be connected provided $N \geq 2$, the weight $a$ be such that $\intO a \varphi_p^q \,dx=0$, and $b \in C(\overline{\Omega}) \setminus \{0\}$ be a nonnegative function. 
		Then there exists $\widehat{\mu}>0$  such that for any $\mu \in [0,\widehat{\mu})$ there exist
		$\Lambda^* = \Lambda^*(\mu)$ and $\Lambda = \Lambda(\mu)$ satisfying $\lambda_1(p) <\Lambda^* \leq \Lambda < +\infty$ such that the assertions \ref{thm:1:r1}-\ref{thm:1:r4} of Theorem \ref{thm:1} hold for the problem \eqref{eq:Pm} and the corresponding functional $\wIm$.
		Moreover, for any $\mu \in (0,\widehat{\mu})$ there exists $\epsilon = \epsilon(\mu)>0$ such that for any $\lambda \in (\lambda_1(p)-\epsilon, \lambda_1(p))$ the problem \eqref{eq:Pm} possesses at least three distinct  nonnegative solutions with negative energy - global minimum, local minimum, and mountain pass critical point of $\wIm$. The first two critical points are positive in $\Omega_{a_\mu}^+$, while the third one is positive at least in one connected component of $\Omega_{a_\mu}^+$.
	\end{theorem}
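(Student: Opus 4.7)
The plan proceeds in two stages: first, extend Theorem~\ref{thm:1} from $a$ to $a_\mu$ for $\mu \in [0,\widehat{\mu})$; second, for each fixed $\mu \in (0,\widehat{\mu})$, produce three distinct nonnegative solutions in a left neighborhood of $\lambda_1(p)$ by combining a global minimum, the extended local minimum, and a mountain pass critical point.

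\textit{Stage 1 (extension of Theorem~\ref{thm:1}).} The case $\mu=0$ is Theorem~\ref{thm:1}(II). For $\mu \in (0,\widehat{\mu})$ one has $\intO a_\mu \varphi_p^q\,dx > 0$ and hence $\lambda^*(a_\mu)=\lambda_1(p)$ by Proposition~\ref{prop:+}, so neither assumption of Theorem~\ref{thm:1} applies literally. The idea is a perturbation argument whose key input is the minimizer $u_0$ of $M(\lambda_1(p))$ provided by Theorem~\ref{NoGroundStates}(ii). Via the equivalence $M=\widetilde{M}$ (Section~\ref{sec:relation}) and Proposition~\ref{prop:charact}, $u_0$ is a strict local minimum of $\widetilde{I}_{\lambda_1(p)}^{0}$ with negative energy and $u_0>0$ in $\Omega_a^+$. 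Since $\widetilde{I}_\lambda^\mu(u) = \widetilde{I}_\lambda^{0}(u) - (\mu/q)\intO b\,u_+^q\,dx$ depends continuously on $(\lambda,\mu)$, for $\widehat{\mu}$ small this strict local minimum structure persists on a fixed closed ball around $u_0$ for all $(\lambda,\mu)$ near $(\lambda_1(p),0)$, yielding the local minimizer $u_\lambda^\mu$ of Theorem~\ref{thm:1}(i)--(ii) with negative energy and $u_\lambda^\mu>0$ in $\Omega_{a_\mu}^+$. The upper bound $\Lambda(\mu)<+\infty$ follows from Proposition~\ref{prop:bounded} applied to $a_\mu$, and the least-$\widetilde{I}$-energy solution $w_\lambda$ and the mountain pass critical point $v_\lambda$ are produced exactly as in the proof of Theorem~\ref{thm:1}.

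\textit{Stage 2 (three solutions for $\lambda$ slightly below $\lambda_1(p)$).} Fix $\mu\in(0,\widehat{\mu})$. For any $\lambda<\lambda_1(p)=\lambda^*(a_\mu)$, $\widetilde{E}_\lambda$ is coercive on $\W$, so $\widetilde{I}_\lambda^\mu$ is coercive and weakly lower semicontinuous and admits a global minimizer $w_\lambda$ with $\widetilde{I}_\lambda^\mu(w_\lambda)<0$ and $w_\lambda>0$ in $\Omega_{a_\mu}^+$ (Theorem~\ref{thm:GS}(i) and Proposition~\ref{prop:charact}). The same perturbation argument of Stage~1 works for $\lambda$ slightly below $\lambda_1(p)$, producing a strict local minimizer $u_\lambda^\mu$ close to $u_0$ with negative energy and $u_\lambda^\mu>0$ in $\Omega_{a_\mu}^+$. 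Direct computation of $\widetilde{I}_\lambda^\mu(t\varphi_p)$ and its minimization in $t>0$ shows that the global minimum value of $\widetilde{I}_\lambda^\mu$ tends to $-\infty$ as $\lambda\to\lambda_1(p)-$, so shrinking $\epsilon(\mu)$ forces $w_\lambda$ to leave any fixed ball around $u_0$; in particular $w_\lambda\ne u_\lambda^\mu$. The third critical point $v_\lambda$ comes from the mountain pass theorem applied to $\widetilde{I}_\lambda^\mu$ with wells at the strict local minima $w_\lambda$ and $u_\lambda^\mu$; Palais--Smale follows from coercivity, and strict local minimality at $u_\lambda^\mu$ ensures $v_\lambda$ is distinct from both. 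To see $\widetilde{I}_\lambda^\mu(v_\lambda)<0$, I would exhibit a connecting path from $u_\lambda^\mu$ to $w_\lambda$ along which $\widetilde{I}_\lambda^\mu$ stays negative; both endpoints lie in the truncated Nehari manifold associated to $a_\mu$ with $\intO a_\mu u_+^q\,dx>0$, where $\widetilde{I}_\lambda^\mu=(1/p-1/q)\intO a_\mu u_+^q\,dx<0$, so a Nehari-scaled interpolation produces such a path. Positivity of $v_\lambda$ in at least one connected component of $\Omega_{a_\mu}^+$ follows by standard strong maximum principle arguments applied locally in that open set.

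\textit{Main obstacle.} The principal technical difficulty is the quantitative persistence of the strict local minimum $u_\lambda^\mu$ as $\lambda$ crosses the critical value $\lambda_1(p)=\lambda^*(a_\mu)$, where the global minimum $w_\lambda$ appears with energy diverging to $-\infty$ as $\lambda\to\lambda_1(p)-$. Equally delicate is guaranteeing that the mountain pass value is strictly negative: since $\widetilde{I}_\lambda^\mu(0)=0$, naive contracting paths through zero fail, and a careful Nehari-scaled construction confined to the negative sublevel set is required.
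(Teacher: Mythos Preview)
Your overall two-stage strategy matches the paper's, and your Stage~2 is essentially correct. The genuine gap lies in Stage~1, where you invoke Proposition~\ref{prop:charact} to assert that a chosen minimizer $u_0$ of $M(\lambda_1(p))$ is a \emph{strict} local minimum of $\widetilde{I}_{\lambda_1(p)}^{0}$. Proposition~\ref{prop:charact} only yields that $u_0$ is a local minimum point; strictness is not established anywhere in the paper, and indeed the remark following Lemma~\ref{lem:3} explicitly says that it is not known a priori whether these minimizers are strict. Without strictness at a single point, the ``persistence on a fixed closed ball around $u_0$'' step is unjustified: under perturbation in $(\lambda,\mu)$ the infimum over that ball may well be attained on its boundary, and no interior critical point is produced. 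The same unproved strictness is used again in Stage~2 to separate $v_\lambda$ from $u_\lambda^\mu$.

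The paper avoids this by replacing the single minimizer $u_0$ with the full compact set $K^*$ of all nonnegative minimizers of $\widetilde{M}(\lambda^*)$ (Section~\ref{sec:K*}). Lemma~\ref{lem:3} proves the set-level strict inequality
\[
\widetilde{M}(\lambda^*)<\inf\{\widetilde{I}_{\lambda^*}(u):u\in\partial K^*_\delta\},
\]
and because $K^*_\delta$ is compact in $L^p(\Omega)$ and $L^q(\Omega)$ (Lemma~\ref{lem:2}), this gap survives uniformly under the continuous perturbation $\widetilde{I}_\lambda^\mu-\widetilde{I}_{\lambda^*}^{0}$, yielding \eqref{eq:proof:1m} for all $(\lambda,\mu)$ close to $(\lambda_1(p),0)$. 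The interior minimizer $u_\lambda^\mu$ over the weakly closed set $K^*_\delta$ then exists regardless of whether any individual minimizer of $\widetilde{M}(\lambda^*)$ is strict, and \eqref{eq:proof:1m} simultaneously gives the separation of the mountain pass level from $\wIm(u_\lambda^\mu)$. Your Nehari-scaled path idea for negativity of the mountain pass value is essentially the construction carried out in the proof of Theorem~\ref{thm:2}; so once you swap the single-point ball for the $K^*_\delta$ framework, your argument goes through.
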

	
	\begin{remark}
		Let us notice that $\inf\{\Lambda^*(\mu): \mu \in [0,\widehat{\mu})\}>\lambda_1(p)$, which follows from the proof of Theorem \ref{thm:3}.
		We refer to Figures \ref{fig:p2q}, \ref{fig:bifur2} for a schematic graphical representation of the results of Theorems \ref{thm:1} and \ref{thm:3}. 
	\end{remark}
	
	\begin{remark}
		In essence, the proofs of Theorems \ref{thm:1} and \ref{thm:3} are based on the observation that the set of nonnegative minimizers of $M(\lambda^*)$ (being nonempty by Theorem \ref{NoGroundStates} \ref{NoGroundStates:1}, \ref{NoGroundStates:2}) has a strict local minimum type geometry which is stable under continuous perturbations of $\wI$.
		In particular, our arguments on the existence of a local minimum point are not restricted solely to $\wIm$, but applicable to \textit{any} continuous perturbation of the functional $\widetilde{I}_{\lambda^*}$. 
	\end{remark}

	\begin{figure}[h!]
		\center{
			\includegraphics[width=0.55\linewidth]{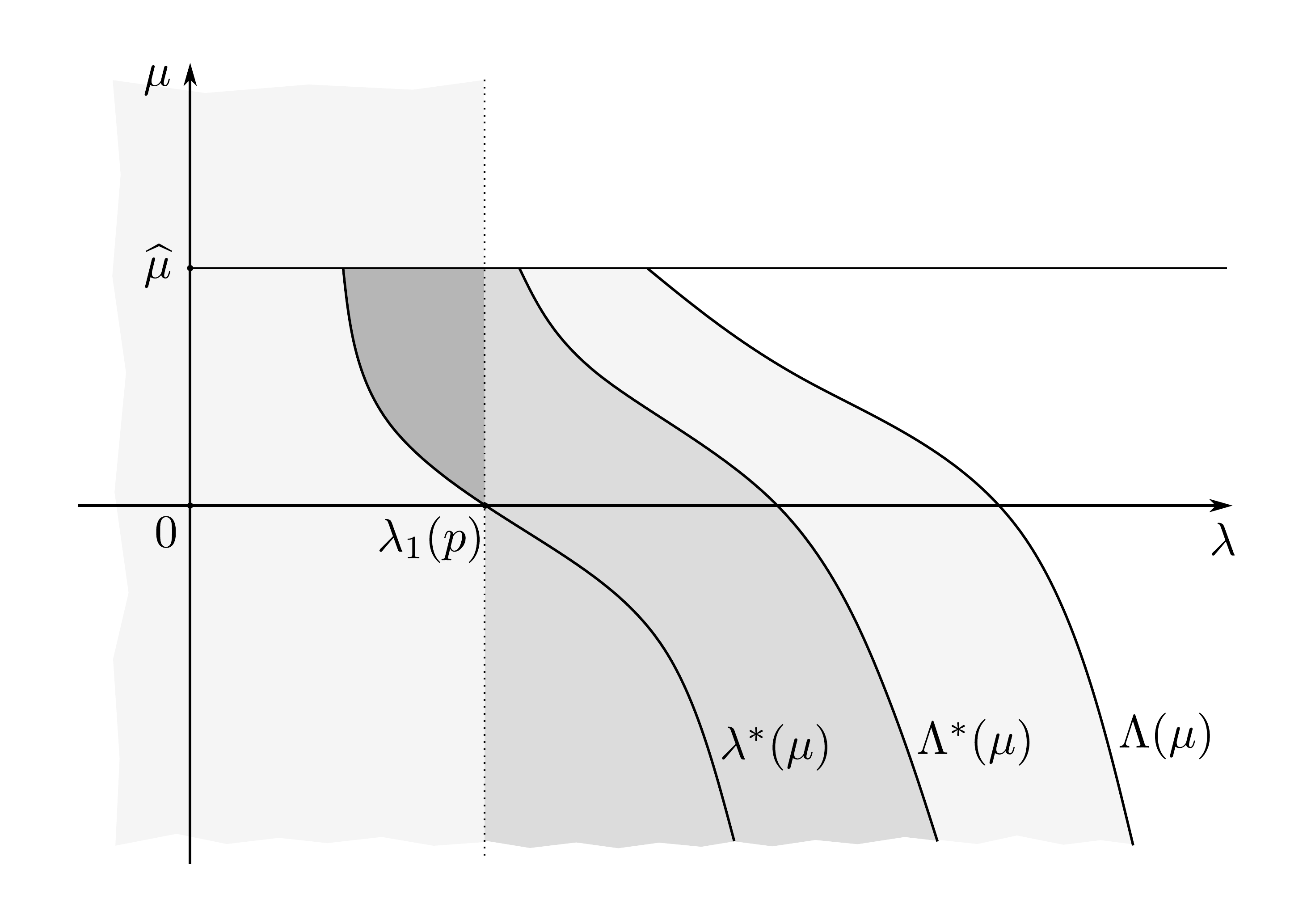}
		}
		\caption{A schematic picture of the regions in the $(\lambda,\mu)$-plane provided by Theorem~\ref{thm:1} (for $\mu \leq 0$) and Theorem \ref{thm:3} (for $\mu \in (0,\widehat{\mu})$).
		Light gray - (at least) one solution. 
		Gray - two solutions.
		Dark gray - three solutions.
	}
		\label{fig:p2q}
	\end{figure}

	\begin{figure}[h!]
		\center{
			\includegraphics[width=0.55\linewidth]{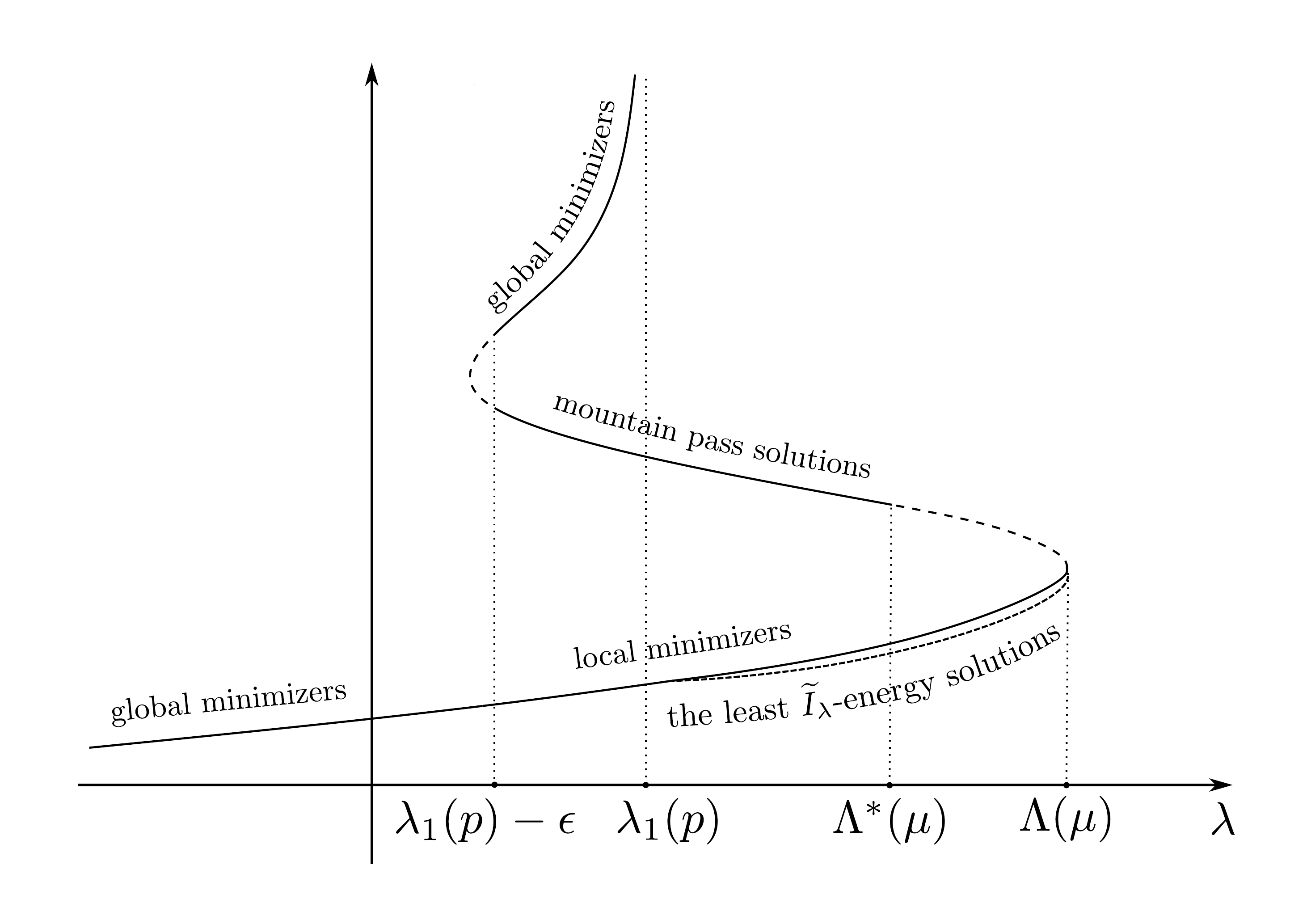}
		}
		\caption{A schematic $L^\infty(\Omega)$-bifurcation diagram provided by Theorem \ref{thm:3} for $\mu \in (0,\widehat{\mu})$.}
		\label{fig:bifur2}
	\end{figure}

	Recently, the literature has been enriched with a few results on the local continuation of the branch of nonnegative solutions to various problems with respect to the parameter beyond a critical value of the type $\lambda^*$ characterizing the limit of applicability of the Nehari manifold method, see, e.g., \cite{BT2021,IS,QS,SM}.
	To the best of our knowledge, the first contribution in this direction was made in \cite{IS} for the problem \eqref{eq:P} in the superhomogeneous regime $q>p$. 
	A similar approach was suggested in \cite{QS} with application to \eqref{eq:P} in the subhomogeneous case $q<p$.
	Unlike \cite{IS,QS}, our arguments do not depend on a particular structure of the Nehari manifold of the perturbed problem, which makes them more universal.

	\medskip
	Our last main result provides information on the nonexistence, in contrast to Theorem~\ref{thm:1}.
	\begin{theorem}\label{NoSol}
	\marginnote{{\scriptsize Proof on\\ p.\pageref{sec:nonexistence}}}
		Assume, in addition to $1<q<p$, that 
		\begin{equation}\label{eq:Picone-0} 
			(q-1) s^p + q s^{p-1} - (p-q) s + (q-p+1) \geq 0  
			~~\text{for all}~~  s \geq 0.
		\end{equation}
		Let, moreover, one of the following assumptions be satisfied:
		\begin{enumerate}[label={\rm(\roman*)}]
			\item\label{NoSol:1} $\intO a\varphi_p^q\,dx= 0$ and $\lambda>\lambda_1(p) \,(=\lambda^*)$.
			\item\label{NoSol:2} $\intO a\varphi_p^q\,dx>0$ and $\lambda\ge \lambda_1(p) \,(=\lambda^*)$.
		\end{enumerate}	
		Then there exists no nonzero nonnegative solution $u$ 
		of \eqref{eq:P} such that $u>0$ in $\Omega_a^+$.
	\end{theorem}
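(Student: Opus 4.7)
The plan is to argue by contradiction. Suppose $u\in\W$ is a nonzero nonnegative solution of \eqref{eq:P} with $u>0$ in $\Omega_a^+$. The strategy is to derive the integrated Picone-type estimate
\begin{equation*}
(\lambda-\lambda_1(p))\intO u^{p-q}\varphi_p^q\,dx + \intO a\varphi_p^q\chi_{\{u>0\}}\,dx \leq 0,
\end{equation*}
and then to read off a contradiction by analyzing the signs of the two summands under (i) and (ii).

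First I would test the weak formulation of \eqref{eq:P} against the regularized Picone test function $\psi_\epsilon:=\varphi_p^q/(u+\epsilon)^{q-1}\in\W$ (for $\epsilon>0$, admissible since $\varphi_p\in C^1_0(\overline{\Omega})$ and $u+\epsilon\geq\epsilon$), which yields
\begin{equation*}
q\intO \frac{\varphi_p^{q-1}}{(u+\epsilon)^{q-1}}|\nabla u|^{p-2}\nabla u\cdot\nabla\varphi_p\,dx - (q-1)\intO \frac{\varphi_p^q}{(u+\epsilon)^q}|\nabla u|^p\,dx = \lambda\intO\frac{u^{p-1}\varphi_p^q}{(u+\epsilon)^{q-1}}\,dx + \intO\frac{a u^{q-1}\varphi_p^q}{(u+\epsilon)^{q-1}}\,dx.
\end{equation*}
In parallel I would test the eigenvalue equation $-\Delta_p\varphi_p=\lambda_1(p)\varphi_p^{p-1}$ against the companion function $\Phi_\epsilon:=(u+\epsilon)^{p-q}\varphi_p^{q-p+1}$; its admissibility in $\W$ for each $\epsilon>0$ follows from Hopf's boundary lemma applied to $\varphi_p$ together with integrability of the weight $\varphi_p^{q-p}|\nabla\varphi_p|^p$ near $\partial\Omega$ (a consequence of the implicit constraint $q>p-1$ embedded in \eqref{eq:Picone-0}). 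This produces
\begin{equation*}
(p-q)\intO(u+\epsilon)^{p-q-1}\varphi_p^{q-p+1}|\nabla\varphi_p|^{p-2}\nabla\varphi_p\cdot\nabla u\,dx + (q-p+1)\intO (u+\epsilon)^{p-q}\varphi_p^{q-p}|\nabla\varphi_p|^p\,dx = \lambda_1(p)\intO(u+\epsilon)^{p-q}\varphi_p^q\,dx.
\end{equation*}

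The Picone step applies the bivariate form of \eqref{eq:Picone-0}, namely $(q-1)\alpha^p+q\alpha^{p-1}\beta-(p-q)\alpha\beta^{p-1}+(q-p+1)\beta^p\geq 0$ for all $\alpha,\beta\geq 0$ (obtained from \eqref{eq:Picone-0} by setting $s=\alpha/\beta$), pointwise with $\alpha=((u+\epsilon)/\varphi_p)^{(p-q)/p}|\nabla\varphi_p|$ and $\beta=(\varphi_p/(u+\epsilon))^{q/p}|\nabla u|$. A direct computation verifies that the four monomials $\alpha^p,\alpha^{p-1}\beta,\alpha\beta^{p-1},\beta^p$ carry precisely the weights $\bigl(\varphi_p/(u+\epsilon)\bigr)^{?}$ appearing in the four dissipative integrals of the two tested identities above. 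Combining the integrated pointwise inequality with both identities and absorbing the cross-terms via the Cauchy-Schwarz bounds $|\nabla u|^{p-2}\nabla u\cdot\nabla\varphi_p\leq|\nabla u|^{p-1}|\nabla\varphi_p|$ and $|\nabla\varphi_p|^{p-2}\nabla\varphi_p\cdot\nabla u\leq|\nabla\varphi_p|^{p-1}|\nabla u|$, after rearrangement one arrives at
\begin{equation*}
\lambda\intO\frac{u^{p-1}\varphi_p^q}{(u+\epsilon)^{q-1}}\,dx + \intO\frac{a u^{q-1}\varphi_p^q}{(u+\epsilon)^{q-1}}\,dx \leq \lambda_1(p)\intO(u+\epsilon)^{p-q}\varphi_p^q\,dx.
\end{equation*}

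Finally I would send $\epsilon\to 0^+$ by dominated convergence (each integrand is uniformly bounded by constants depending on $\|u\|_\infty$, $\|\varphi_p\|_\infty$, $\|a\|_\infty$, with pointwise limits $u^{p-1}/(u+\epsilon)^{q-1}\to u^{p-q}$, $au^{q-1}/(u+\epsilon)^{q-1}\to a\chi_{\{u>0\}}$, and $(u+\epsilon)^{p-q}\to u^{p-q}$), obtaining the claimed estimate. Since $u>0$ on $\Omega_a^+$ and $a\leq 0$ on $\Omega\setminus\Omega_a^+$, one has $\intO a\varphi_p^q\chi_{\{u>0\}}\,dx\geq\intO a\varphi_p^q\,dx$, so that $(\lambda-\lambda_1(p))\intO u^{p-q}\varphi_p^q\,dx + \intO a\varphi_p^q\,dx\leq 0$. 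Under (i), $\lambda>\lambda_1(p)$ and $\intO u^{p-q}\varphi_p^q\,dx>0$ (since $u>0$ on $\Omega_a^+\ne\emptyset$) make the first summand strictly positive while the second vanishes — a contradiction; under (ii), the first summand is nonnegative while the second is strictly positive — again a contradiction. The main obstacle I anticipate is the bookkeeping in the Picone step: only the specific coefficient ratio $(q-1):q:-(p-q):(q-p+1)$ in \eqref{eq:Picone-0} aligns the four integrals of the bivariate inequality with the four dissipative integrals produced by the two tested identities so that the cross-terms cancel cleanly. A secondary technical issue is the admissibility of $\Phi_\epsilon$ near $\partial\Omega$, handled by the Hopf lemma for $\varphi_p$ combined with $q>p-1$, while the limit $\epsilon\to 0^+$ is controlled by the $L^\infty$ regularity of solutions recorded in Remark \ref{rem:reg}.
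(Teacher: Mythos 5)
Your overall architecture is exactly the paper's: test \eqref{eq:P} against $\varphi_p^q/(u+\varepsilon)^{q-1}$, test the eigenvalue equation against $\varphi_p^{q-p+1}/(u+\varepsilon)^{q-p}$, compare the two gradient terms via a Picone-type pointwise inequality, let $\varepsilon\to 0$, and read off the contradiction from the signs of $(\lambda_1(p)-\lambda)\intO u^{p-q}\varphi_p^q\,dx$ and $\int_{\{u>0\}}a\varphi_p^q\,dx\ge\intO a\varphi_p^q\,dx$. The final limiting estimate and the endgame under (i) and (ii) are correct and identical to the paper's.

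The one step you chose to derive inline rather than cite is where the argument breaks. Expanding the two tested identities, the quantity you must show is nonnegative is
\begin{equation*}
(q-1)\tfrac{\varphi_p^q}{(u+\varepsilon)^q}|\nabla u|^p \;-\; q\tfrac{\varphi_p^{q-1}}{(u+\varepsilon)^{q-1}}|\nabla u|^{p-2}\nabla u\cdot\nabla\varphi_p \;+\;(p-q)\tfrac{\varphi_p^{q-p+1}}{(u+\varepsilon)^{q-p+1}}|\nabla\varphi_p|^{p-2}\nabla\varphi_p\cdot\nabla u\;+\;(q-p+1)\tfrac{\varphi_p^{q-p}}{(u+\varepsilon)^{q-p}}|\nabla\varphi_p|^p.
\end{equation*}
The two cross terms carry the \emph{same} inner product $\nabla u\cdot\nabla\varphi_p$ with coefficients of \emph{opposite} sign ($-q$ and $+(p-q)$). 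Applying your two Cauchy--Schwarz bounds to each term separately therefore substitutes $\nabla u\cdot\nabla\varphi_p$ by $+|\nabla u||\nabla\varphi_p|$ in one and effectively by $-|\nabla u||\nabla\varphi_p|$ in the other, and the resulting one-variable polynomial is $(q-1)s^p - qs^{p-1} - (p-q)s + (q-p+1)$, which equals $2(q-p)<0$ at $s=1$; it is \emph{not} the polynomial in \eqref{eq:Picone-0} and is not nonnegative, so the claimed cancellation does not occur. To reach \eqref{eq:Picone-0} one must group the two cross terms into a single coefficient of $\nabla u\cdot\nabla\varphi_p$ before estimating; the worst case is then anti-parallel gradients when that combined coefficient is positive (giving exactly \eqref{eq:Picone-0}) and parallel gradients otherwise (requiring $(q-1)s^p-qs^{p-1}+(p-q)s+(q-p+1)\ge 0$ in the complementary regime). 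This case analysis is precisely the content of the cited \cite[Theorem 1.8]{BT_Picone}, which the paper invokes as a black box; your proof is repaired by doing the same. Relatedly, your substitution is mismatched: your $\alpha^p$ equals $\tfrac{\varphi_p^{q-p}}{(u+\varepsilon)^{q-p}}|\nabla\varphi_p|^p$, which is the term carrying the coefficient $q-p+1$, not $q-1$, so $\alpha$ and $\beta$ must be interchanged. The remaining technical points (admissibility of the test functions, the limit $\varepsilon\to0$) are handled essentially as you describe.
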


	The proof of Theorem \ref{NoSol} is based on the application of a generalized Picone inequality established by the present authors in \cite[Theorem 1.8]{BT_Picone}.
	A description and some properties of the set of exponents $p$ and $q$ for which \eqref{eq:Picone-0} holds are discussed in \cite[Lemma 1.6 and Remark 1.7]{BT_Picone}, and several sufficient assumptions can be also found therein.
	Let us explicitly emphasize two properties.
	First, if $p > 2q \,(>2)$, then \eqref{eq:Picone-0} is not satisfied, and hence there is no contradiction with the existence results provided by Theorem \ref{thm:1} \ref{thm:1:2}, see Figure \ref{fig:1}.
	Second, if $p=2$, then \eqref{eq:Picone-0} holds for all $q \in (1,2)$, and hence Theorem \ref{thm:1} \ref{thm:1:2} cannot be extended to the case $q<p=2$. 
	Finally, we recall that if $\intO a \varphi_p^q \,dx \geq 0$ and $q>p$, then \eqref{eq:P} has no positive solution for $\lambda > \lambda^*$, see \cite{ilyasov2}.
	That is, the results of Theorem \ref{thm:1} in the case $\intO a \varphi_p^q \,dx = 0$ are of a purely nonlinear and subhomogeneous nature. 
	
	\begin{remark}
		Theorem \ref{NoSol} implies that if, under the imposed assumptions, there exists a nonzero nonnegative solution $u$ of \eqref{eq:P}, then it must exhibit a dead core in $\Omega_a^+$, that is, $u \equiv 0$ in a connected component of $\Omega_a^+$.
	\end{remark}
	
	\begin{remark}
		We notice that the assumption \eqref{eq:Picone-0} can be relaxed to $p\le q+1$ provided any nonzero nonnegative solution $u$ of \eqref{eq:P} with $u>0$ in $\Omega_a^+$ satisfies $\nabla u \nabla \varphi_p \geq 0$ in $\Omega$. 
		The proof of Theorem \ref{NoSol} under these assumptions follows along the same lines, by applying \cite[Theorem 1.8 (ii)]{BT_Picone}.
	\end{remark}
	
	\begin{figure}[h!]
		\center{
		\includegraphics[width=0.5\linewidth]{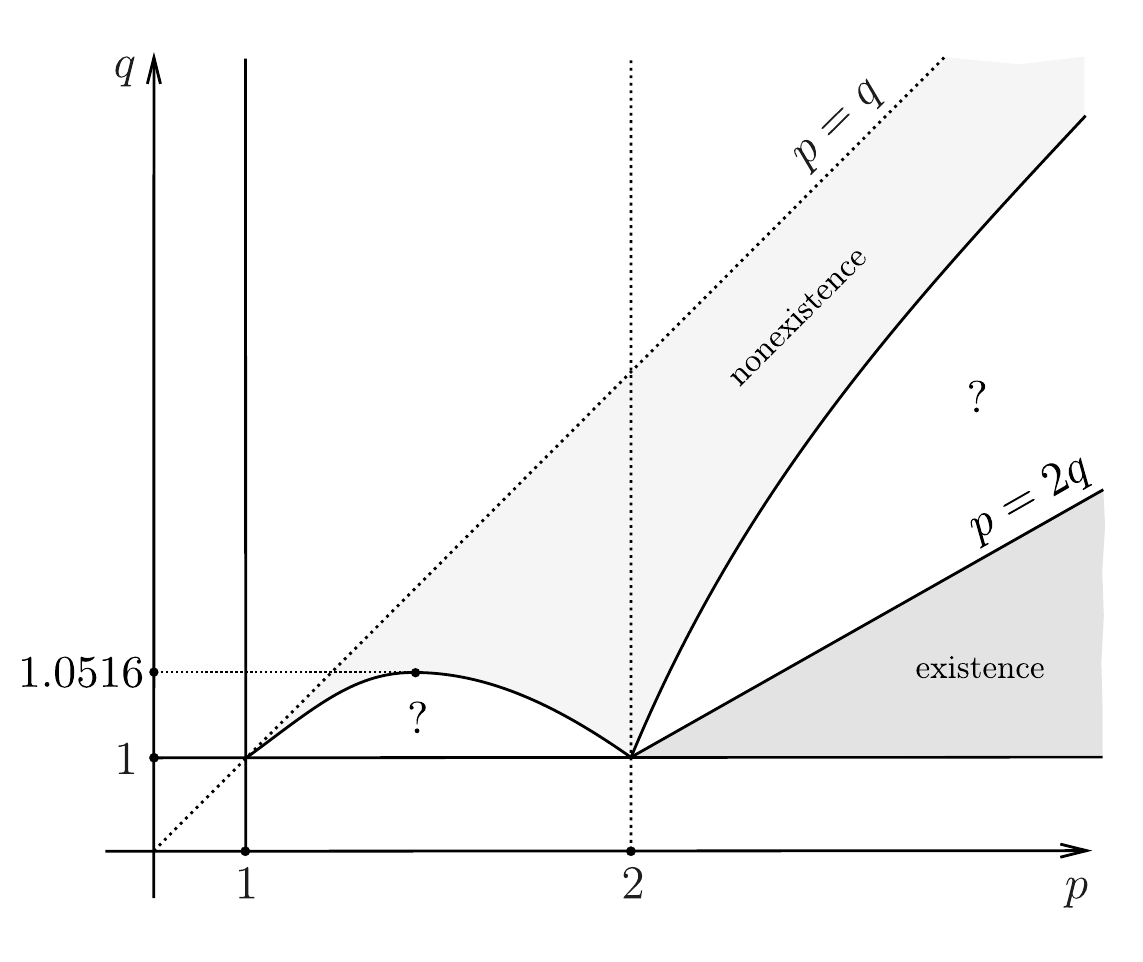}
		}
		\caption{A schematic picture of regions in the $(p,q)$-plane for the validity of Theorem~\ref{thm:1}~\ref{thm:1:2} (gray set) and Theorem~\ref{NoSol}~\ref{NoSol:1} (light gray set)
		in the case $\intO a \varphi_p^q \,dx =0$ and $\lambda>\lambda_1(p)$.
		}
		\label{fig:1}
	\end{figure}

	The rest of the work is organized as follows.
	In Section \ref{sec:auxiliary}, we provide various auxiliary results on the properties of the energy functionals $\I$ and $\wI$.
	Section \ref{sec:proof:NoGroundS} is devoted to the proof of Theorem \ref{NoGroundStates} and to the properties of the set of nonnegative minimizers of $M(\lambda^*)$.
	In Section \ref{sec:qualitative}, we prove Propositions \ref{prop:properties-m}, \ref{prop:PCV}, as well as several other properties of $M$ and $M^-$ and their minimizers.
	In Sections  \ref{sec:thm1} and \ref{sec:thm3}, we present the proofs of Theorems \ref{thm:1} and \ref{thm:3}, respectively.
	Finally, in Section \ref{sec:nonexistence}, we prove of Theorem \ref{NoSol}.

	\section{Auxiliary results}\label{sec:auxiliary}
	
	In this section, we collect several auxiliary results, especially on properties of the energy functionals $\I$ and $\wI$ defined by \eqref{def:functionals} and \eqref{def:functionals-t}, respectively, and on their critical points. 
	Most of the results will be used in the proofs of our main theorems, while several facts also have an independent interest.
	
	\subsection{Special values of parameter} 
	In order to provide finer analysis, we introduce the following critical values of $\lambda$ in addition to $\lambda^*$, and study their relations in brief:
	\begin{align} 
		\label{eq:l+}
		\lambda_\pm^* &:=\inf\left\{\dfrac{\|\nabla u\|_p^p}{\|u\|_p^p}:~
		u\in\W,\ \pm \intO a|u|^q\,dx >0\,\right\},
		\\ 
		\notag
		\lambda_0^* &:=\inf\left\{\dfrac{\|\nabla u\|_p^p}{\|u\|_p^p}:~
		u\in\W\setminus\{0\},\ \intO a|u|^q\,dx=0\,\right\}.
	\end{align}
	
	\begin{proposition}\label{prop:+} 
		The following assertions hold:
		\begin{enumerate}[label={\rm(\roman*)}]
			\item\label{prop:+:1} Let $\intO a\varphi_p^q\,dx>0$. Then 
			\begin{equation}\label{eq:lll}
				\lambda_1(p)=\lambda^*=\lambda_+^*
				<\lambda_0^*=\lambda_-^*.
			\end{equation}
			Moreover, 
			$\lambda^*$ and $\lambda_+^*$ are attained only by $t\varphi_p$ 
			($t \neq 0$), 
			$\lambda_0^*$ is attained, and $\lambda_-^*$ is not attained.
			\item\label{prop:+:2} Let $\intO a\varphi_p^q\,dx=0$. Then 
			$$
			\lambda_1(p)=\lambda^*=\lambda_0^*
			=\lambda_\pm^*.
			$$
			Moreover, 
			$\lambda^*$ and $\lambda_0^*$ are attained only by $t\varphi_p$ 
			($t \neq 0$), 
			while $\lambda_\pm^*$ is not attained.
			
			\item\label{prop:+:3} Let $\intO a\varphi_p^q\,dx<0$. Then 
			$$
			\lambda_1(p)=\lambda_-^*
			<\lambda_0^*=\lambda_+^*=\lambda^*.
			$$
			Moreover, $\lambda_-^*$ 
			is attained only by $t\varphi_p$ 
			($t \neq 0$), 
			$\lambda_0^*$ and $\lambda^*$ are attained, and $\lambda_+^*$ is not attained. 
		\end{enumerate}
	\end{proposition}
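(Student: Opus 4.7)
The plan is to deduce all three cases from two ingredients: test-function admissibility of $\varphi_p$ (or a small perturbation thereof) in the relevant infimum, and a weak-compactness argument applied to minimizing sequences, combined with the simplicity of $\lambda_1(p)$. Writing $R(u) := \|\nabla u\|_p^p/\|u\|_p^p$, I first record the elementary inclusions $\lambda_1(p)\le\lambda^*\le\lambda_0^*$, $\lambda_1(p)\le\lambda^*\le\lambda_+^*$, and $\lambda_1(p)\le\lambda_-^*$ coming from nesting of admissible sets, together with the identity $\lambda^*=\min\{\lambda_+^*,\lambda_0^*\}$ from the decomposition of $\{u\ne 0:\intO a|u|^q\,dx\ge 0\}$ into the subsets where the integral is strictly positive or zero.

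In case \ref{prop:+:1}, since $\varphi_p$ is admissible for $\lambda_+^*$, one has $\lambda_+^*\le R(\varphi_p)=\lambda_1(p)$; combined with the reverse inclusions this gives $\lambda^*=\lambda_+^*=\lambda_1(p)$, attained only by multiples of $\varphi_p$ by simplicity. To handle $\lambda_0^*$ and $\lambda_-^*$ I would take a minimizing sequence $\{u_n\}$ for either, normalized by $\|u_n\|_p=1$, and extract by compact embedding a weak limit $u$ with $\|u\|_p=1$, $R(u)\le\liminf R(u_n)$, and $\intO a|u|^q\,dx\le 0$ (respectively $=0$). If $\intO a|u|^q\,dx<0$, then $u$ would minimize $R$ over an open set and hence unconstrainedly, forcing $u=c\varphi_p$ and contradicting $\intO a\varphi_p^q\,dx>0$; thus $\intO a|u|^q\,dx=0$, which yields $\lambda_0^*\le R(u)\le\lambda_-^*$ (attainment of $\lambda_0^*$) and, applied to any putative minimizer of $\lambda_-^*$, its nonattainment. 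For the reverse $\lambda_-^*\le\lambda_0^*$: given $u$ admissible for $\lambda_0^*$, I consider $u+t\phi$ with $t\to 0$. Either some $\phi\in\W$ makes $\intO a|u|^{q-2}u\phi\,dx\ne 0$ (so the first-order expansion pushes $\intO a|u+t\phi|^q\,dx$ into the negatives for small $t$ of the right sign), or $u$ vanishes identically on $\Omega_a^+\cup\Omega_a^-$, in which case I take $\phi\in C_c^\infty(\Omega_a^-)$ nonnegative and compute $\intO a|u+t\phi|^q\,dx=t^q\intO a\phi^q\,dx<0$ by disjoint supports. In both subcases $R(u+t\phi)\to R(u)$, giving $\lambda_-^*\le\lambda_0^*$. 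The strict inequality $\lambda^*<\lambda_0^*$ comes from the same simplicity argument: a minimizing sequence for $\lambda_0^*$ reaching $\lambda_1(p)$ would converge in $L^q$ to $\pm\varphi_p$, but then $\intO a|u_n|^q\,dx\to\intO a\varphi_p^q\,dx>0$ contradicts the $=0$ constraint.

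Case \ref{prop:+:2} is easier because $\varphi_p$ is admissible for both $\lambda^*$ and $\lambda_0^*$, forcing $\lambda_0^*=\lambda^*=\lambda_1(p)$ attained only by $t\varphi_p$. For $\lambda_\pm^*\le\lambda_1(p)$ I perturb $\varphi_p$ by $t\phi$ with $\phi\in C_c^\infty(\Omega_a^\pm)$ nonnegative and nontrivial: the first-order change $tq\intO a\varphi_p^{q-1}\phi\,dx$ has the correct sign, and $R(\varphi_p+t\phi)\to\lambda_1(p)$. Nonattainment of $\lambda_\pm^*$ follows as before: any minimizer must be $c\varphi_p$, incompatible with $\intO a|c\varphi_p|^q\,dx=0$ against the strict sign constraint. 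Case \ref{prop:+:3} is fully symmetric to \ref{prop:+:1}: now $\varphi_p$ is admissible for $\lambda_-^*$, giving $\lambda_-^*=\lambda_1(p)$ attained uniquely up to scalar, and the roles of $\lambda_+^*$ and $\lambda_-^*$ are swapped throughout.

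The principal obstacle I foresee is the perturbation step for $\lambda_0^*=\lambda_\mp^*$ in cases \ref{prop:+:1} and \ref{prop:+:3}: one must treat the borderline possibility that $u$ vanishes on $\Omega_a^+\cup\Omega_a^-$, where the Gateaux derivative of $u\mapsto\intO a|u|^q\,dx$ is identically zero. The compactly-supported bump fallback repairs this through the $t^q$ term, exploiting $q>1$; every remaining step relies only on routine weak lower semicontinuity of $R$ and simplicity of $\lambda_1(p)$.
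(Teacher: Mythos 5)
Your overall strategy coincides with the paper's: obtain $\lambda_1(p)=\lambda^*=\lambda_+^*$ (resp.\ $\lambda_-^*$) from admissibility of $\varphi_p$ plus simplicity, get $\lambda_0^*\le\lambda_\mp^*$ from a normalized minimizing sequence and compactness, and get $\lambda_\mp^*\le\lambda_0^*$ by perturbing a $\lambda_0^*$-minimizer, with the compactly supported bump in $\Omega_a^\mp$ as fallback when the derivative of $u\mapsto\intO a|u|^q\,dx$ vanishes identically — that dichotomy is exactly the paper's argument. There is, however, a genuine gap in the step you use twice in case (i) (and, by your declared symmetry, in case (iii)): the claim that if the weak limit $u$ of a minimizing sequence for $\lambda_-^*$ satisfies $\intO a|u|^q\,dx<0$, then $u$ ``minimizes $R$ over an open set and hence unconstrainedly, forcing $u=c\varphi_p$'', where $R(u)=\|\nabla u\|_p^p/\|u\|_p^p$. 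Minimizing $R$ over an \emph{open} subset of $\W\setminus\{0\}$ does not make $u$ a global minimizer of $R$; it only makes $u$ an interior local minimizer, hence a critical point of $R$, i.e.\ an eigenfunction of the Dirichlet $p$-Laplacian at the level $\lambda_-^*$, which a priori could be a sign-changing higher eigenfunction. For such a $u$ there is no contradiction with $\intO a\varphi_p^q\,dx>0$, so as written neither the nonattainment of $\lambda_-^*$ nor the identification $\intO a|u|^q\,dx=0$ for the weak limit is established. Simplicity of $\lambda_1(p)$, the only tool you invoke, does not suffice here; contrast this with your case (ii), where the same kind of reasoning is legitimate precisely because there the constrained infimum equals the global minimum $\lambda_1(p)$, so simplicity alone pins the minimizer down.

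The repair is short and is what the paper does: since $\|\nabla|u|\|_p=\|\nabla u\|_p$, $\||u|\|_p=\|u\|_p$, and the constraint depends only on $|u|$, you may replace $u$ by $|u|$; the interior minimizer is then a \emph{nonnegative} eigenfunction, and by Anane's theorem that the first eigenfunction is the only sign-constant one (a fact stronger than mere simplicity, and cited in the paper) it must be a positive multiple of $\varphi_p$, contradicting $\intO a\varphi_p^q\,dx>0$ in case (i) (resp.\ $<0$ in case (iii), with $\lambda_+^*$ in place of $\lambda_-^*$). With this insertion your proof closes; the remaining steps — attainment of $\lambda_0^*$, the perturbation argument for $\lambda_\mp^*\le\lambda_0^*$ including the degenerate case supported in $\Omega_a^0$, the strict inequality $\lambda_1(p)<\lambda_0^*$, and all of case (ii) — are correct and essentially identical to the paper's proof.
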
 
	\begin{proof} 
		We will prove only the assertion \ref{prop:+:1}. The assertions \ref{prop:+:2} and \ref{prop:+:3} can be proved in much the same way.
		Clearly, the first two equalities in \eqref{eq:lll} and the attainability of $\lambda^*$, $\lambda_+^*$ follow directly from the simplicity of $\lambda_1(p)$. 
		The simplicity also yields $\lambda_1(p)<\lambda_0^*, \, \lambda_-^*$.
		The attainability of $\lambda_0^*$ is  evident.
		
		Let us show that $\lambda_-^*$ is not attained and $\lambda_0^*=\lambda_-^*$.
		If we suppose that $\lambda_-^*$ is attained by some $v$, then it is attained also by $|v|$.
		Since $\intO a|v|^q\,dx<0$, we see that $|v|$ is a local minimum point of the Rayleigh quotient, which means that $|v|$ is a nonnegative eigenfunction of the $p$-Laplacian
		corresponding to $\lambda_-^*$. 
		However, the only sign-constant eigenfunction is the first one (see, e.g., \cite{anane1987}).
		Hence, $\lambda_-^*$ is not attained, and the corresponding minimization sequence converges to a function $v$ satisfying $\intO a|v|^q\,dx=0$.
		In particular, we deduce that $\lambda_0^* \leq \lambda_-^*$.	
		
		To complete the proof, let us obtain the converse inequality $\lambda_-^* \leq \lambda_0^*$. 
		Let $u_0$ be a minimizer of $\lambda_0^*$, and hence we have $u_0 \not\equiv 0$ a.e.\ in $\Omega$ and $\intO a |u_0|^q \,dx=0$.
		Suppose first that $u_0$ is not a critical point of the functional $u \mapsto \intO a|u|^q\,dx$. 
		That is, we can find $v\in\W$ such that 
		$\intO a|u_0|^{q-2}u_0v\,dx<0$. 
		So, for sufficiently small $\varepsilon>0$ we get
		$$
		\intO a|u_0+\varepsilon v|^{q}\,dx
		=
		q
		\intO \int_0^\varepsilon \, a|u_0+t v|^{q-2}(u_0+t v)v\,dtdx <0, 
		$$
		whence $u_0+\varepsilon v$ is an admissible function for $\lambda_-^*$.
		Letting $\varepsilon\to 0$, we obtain $\lambda_-^*\le \lambda_0^*$. 
		
		Suppose now that $\intO a|u_0|^{q-2}u_0v\,dx = 0$ for any $v \in \W$. 
		The fundamental lemma of the calculus of variations yields $a|u_0|^{q-2}u_0 \equiv 0$ a.e.\ in $\Omega$, which implies, in its turn, that $u_0 \equiv 0$ a.e.\ in $\Omega_a^\pm$.
		Taking any $v \in C_0^\infty(\Omega) \setminus \{0\}$ with the support in $\Omega_a^-$ and recalling that $\intO a |u_0|^q \,dx=0$, we obtain
		$$
		\intO a|u_0+\varepsilon v|^{q}\,dx=
		\intO a|u_0|^{q}\,dx
		+
		\varepsilon^q
		\intO a|v|^{q}\,dx
		=
		\varepsilon^q
		\int_{\Omega_a^-} a|v|^{q}\,dx 
		< 0
		$$
		for any $\varepsilon>0$. Consequently, $u_0+\varepsilon v$ is admissible for $\lambda_-^*$, and letting $\varepsilon \to 0$ as above, we obtain the desired inequality $\lambda_-^*\le \lambda_0^*$.
	\end{proof} 
	
	\begin{remark}
		In view of the even nature of the Rayleigh quotient, the critical values $\lambda^*$, $\lambda_\pm^*$, $\lambda_0^*$ can be equivalently characterized via the truncated integrals $\intO u_+^p\,dx$ and $\intO a u_+^q\,dx$.
	\end{remark}
	
	\begin{remark}
		Several sufficient assumptions guaranteeing that the minimizer of $\lambda_0^*$ generates a critical point of $\I$ are presented in \cite[Section 3.1]{QS}.
	\end{remark}

	
	\subsection{Fibered functionals}\label{sec:fiber}
	
	The following results on the fibered functionals associated to $\I$ and $\wI$ are standard and can be found, e.g., in \cite{brown} in the linear case $p=2$, or in \cite{drabek-poh,IS} in the superhomogeneous case $q>p$.
	
	Take any $u \in \W$ such that $E_\lambda(u) \cdot \int_\Omega a |u|^q \,dx > 0$. 
	Then there exists a unique critical point  $t_\lambda(u) > 0$  of the function $t \mapsto \I(tu)$ for $t>0$. 
	We easily see that 
	\begin{equation}\label{tu}
		t_\lambda(u)= \left(\frac{\int_\Omega a |u|^q\,dx}{E_\lambda(u)}\right)^{\frac{1}{p-q}} = 
		\frac{\left|\int_\Omega a |u|^q \,dx\right|^{\frac{1}{p-q}}}
		{\left|E_\lambda(u)\right|^{\frac{1}{p-q}}}
	\end{equation}
	and
	\begin{equation}\label{eq:INeh}
		\I(t_\lambda(u)u) 
		= 
		-\frac{p-q}{pq}\,E_\lambda(t_\lambda(u)u)
		=
		-t_\lambda^q(u) \,\frac{p-q}{pq}\, \intO a|u|^q\,dx.
	\end{equation}
	Consequently, if $E_\lambda(u)>0$ and $\int_\Omega a |u|^q \,dx > 0$, then $t_\lambda(u)$ is the global minimum point of the function $t \mapsto \I(tu)$ for $t>0$, and $\I(t_\lambda(u)u)<0$.
	Analogously, if $E_\lambda(u)<0$ and $\int_\Omega a |u|^q \,dx < 0$, then $t_\lambda(u)$ is the global maximum point of the function $t \mapsto \I(tu)$ for $t>0$, and $\I(t_\lambda(u)u)>0$.
	
	Noting that $E_\lambda(u)$ and $\int_\Omega a |u|^q \,dx$ are $p$- and $q$-homogeneous, respectively, we deduce that
	\begin{equation}\label{eq:Poh}
		\J(u) :=  
		\I(t_\lambda(u) u) 
		= 
		-\text{sign}(E_\lambda(u))\, \frac{p-q}{p q}\, 
		\frac{\left|\int_\Omega a |u|^q \,dx\right|^{\frac{p}{p-q}}}{\left|E_\lambda(u)\right|^{\frac{q}{p-q}}}. 
	\end{equation}
	The functional $\J$ is $0$-homogeneous and it is called \textit{fibered} functional. 
	This functional will serve as a convenient tool in the study of attainability of $M(\lambda)$.
	
	Expressions analogous to \eqref{tu} and \eqref{eq:Poh} hold true for the truncated functional $\wI$ defined by \eqref{def:functionals-t}.
	In particular, we denote by $\widetilde{J}_\lambda$ the truncated fibered functional.

	\subsection{Nehari manifold}
	Let us provide an additional information on the Nehari manifold $\N$ defined by \eqref{eq:nehari}.
	If $E_\lambda(u) \cdot \int_\Omega a |u|^q \,dx > 0$ for some $u \in \W$, then $t_\lambda(u)u \in \N$, where $t_\lambda(u)>0$ is given by \eqref{tu}.
	We have
	\begin{equation}\label{eq:EonN1-0} 
		\I(u)
		=
		-\frac{p-q}{pq}\,E_\lambda(u)= 
		-\frac{p-q}{pq}\,\intO a|u|^q\,dx \quad 
		{\rm for~ any}\ u\in\N,
	\end{equation}
	cf.\ \eqref{eq:INeh}.
	Recall the definitions \eqref{eq:A+} and \eqref{eq:A-} of the sets $\mathcal{A}^+$ and $\mathcal{A}^-$, respectively:
	$$
	\A^\pm
	:=\left\{u \in \W:~ 
	\pm \intO a |u|^q\,dx>0\,\right\}.
	$$
	\begin{lemma}\label{lem:NehariEmpty}
		The following assertions hold:
		\begin{enumerate}[label={\rm(\roman*)}]
			\item\label{lem:NehariEmpty:1}
			$\N \cap \mathcal{A}^+\not=\emptyset$ for any $\lambda\in\mathbb{R}$. 
			\item\label{lem:NehariEmpty:2}
			$\N \cap \mathcal{A}^-\not=\emptyset$ if and only if $\lambda>\lambda_-^*$. 
		\end{enumerate}
	\end{lemma}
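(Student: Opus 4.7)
The plan is to exploit the fibered analysis from Section \ref{sec:fiber}, specifically the fact that if $E_\lambda(u) \cdot \intO a |u|^q\,dx > 0$, then $t_\lambda(u) u \in \N$ with the same sign of $\intO a|u|^q\,dx$ as $u$. Thus, producing elements of $\N \cap \A^\pm$ reduces to producing functions whose energy $E_\lambda$ and weighted integral $\intO a|u|^q\,dx$ share a sign.

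For \ref{lem:NehariEmpty:1}, I would fix $\lambda \in \mathbb{R}$ and select a nonzero test function $u_0 \in C_0^\infty(\Omega)$ with $\mathrm{supp}\, u_0 \subset \Omega_a^+$; this is possible because $\Omega_a^+$ is a nonempty open subset of $\Omega$ by continuity of $a$. Clearly $\intO a |u_0|^q\,dx > 0$. To force $E_\lambda(u_0) > 0$, I would rescale by concentration: either replace $u_0$ by a suitably scaled translate supported in a tiny ball inside $\Omega_a^+$, or invoke the fact that the Rayleigh quotient $\|\nabla u\|_p^p/\|u\|_p^p$ can be made arbitrarily large on $C_0^\infty$ functions localized in smaller and smaller sets. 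Once $E_\lambda(u_0) > \lambda \|u_0\|_p^p$ is achieved, \eqref{tu} gives a well-defined $t_\lambda(u_0) > 0$ with $t_\lambda(u_0)u_0 \in \N \cap \A^+$.

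For \ref{lem:NehariEmpty:2}, the ``only if'' direction is immediate: if $u \in \N \cap \A^-$, then $E_\lambda(u) = \intO a |u|^q\,dx < 0$ forces $\|\nabla u\|_p^p/\|u\|_p^p < \lambda$, while the admissibility of $u$ in the infimum defining $\lambda_-^*$ in \eqref{eq:l+} gives $\|\nabla u\|_p^p/\|u\|_p^p \geq \lambda_-^*$. Combining these yields $\lambda > \lambda_-^*$. For the ``if'' direction, assume $\lambda > \lambda_-^*$; by the definition of $\lambda_-^*$ as an infimum, there exists $v \in \W$ with $\intO a|v|^q\,dx < 0$ and $\|\nabla v\|_p^p/\|v\|_p^p < \lambda$, so $E_\lambda(v) < 0$. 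Hence $E_\lambda(v) \cdot \intO a |v|^q\,dx > 0$, and \eqref{tu} again supplies $t_\lambda(v) > 0$ with $t_\lambda(v)v \in \N \cap \A^-$.

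I do not anticipate any real obstacle: both assertions are essentially structural consequences of the fibered construction combined with the variational characterization of $\lambda_-^*$. The only point requiring slight care is the concentration argument in \ref{lem:NehariEmpty:1} ensuring $E_\lambda(u_0) > 0$ for every $\lambda$, but this is standard and can be made explicit by rescaling $u_0(\,\cdot\,/\varepsilon)$ for small $\varepsilon$ to inflate the Rayleigh quotient beyond any prescribed threshold.
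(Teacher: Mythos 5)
Your proposal is correct and follows essentially the same route as the paper: part \ref{lem:NehariEmpty:1} via a test function supported in $\Omega_a^+$ concentrated to force $E_\lambda>0$ (the paper uses the explicit rescaling $v_n(\cdot)=n^{N/p-1}v(n(\cdot-x_0))$), and part \ref{lem:NehariEmpty:2} via the definition of $\lambda_-^*$ combined with the fibered map $t_\lambda$. No gaps.
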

	\begin{proof} 
		\ref{lem:NehariEmpty:1}
		Since $a \in C(\overline{\Omega})$ and $\Omega_a^+ \neq \emptyset$, for any $x_0 \in \Omega_a^+$ there exists a ball $B(x_0)$ centered at $x_0$ such that $a>0$ in $B(x_0)$. 
		Choose any nonnegative $v\in C_0^\infty(\Omega) \setminus \{0\}$ with the support in $B(x_0)$ and set $v_n(\cdot)=n^{N/p-1} v(n(\cdot-x_0))$. 
		Then $E_\lambda(v_n)>0$ for any sufficiently large $n$, and $\intO a v_n^q\,dx>0$.
		As a result, $t_\lambda(v_n)v_n\in \mathcal{N}_\lambda\cap \mathcal{A}^+$.
		
		\ref{lem:NehariEmpty:2}
		It follows from the definition of $\lambda_-^*$ that if $\lambda \leq \lambda_-^*$, then $E_\lambda(u) \geq 0$ for any $u$ satisfying $\intO a |u|^q \,dx < 0$, and hence $\N \cap \mathcal{A}^- = \emptyset$. 
		On the other hand, if $\lambda>\lambda_-^*$, then there exists $u$ such that $\intO a |u|^q \,dx < 0$ and $\lambda > \|\nabla u\|_p^p/\|u\|_p^p \geq \lambda_-^*$. 
		Consequently, we have $E_\lambda(u)<0$, which yields $t_\lambda(u)u \in \N \cap \mathcal{A}^-$.
	\end{proof} 
	
	\begin{remark}\label{rem:m<0}
		In view of \eqref{eq:Poh}, \eqref{eq:EonN1-0}, and the fact that $\N\cap\mathcal{A}^+\neq\emptyset$ for any $\lambda$, 
		we see that 
		\begin{equation*} 
			M(\lambda)=\inf_{u\in \N} \I(u)=
			\inf_{u\in \N\cap\mathcal{A}^+} \I(u)
			=\inf_{u\in \mathcal{N}_{\lambda}\cap\mathcal{A}^+}\J(u)<0. 
		\end{equation*}
	\end{remark}

	The following result can be proved by standard arguments based on the Lagrange multipliers rule (see, e.g. \cite[Theorem 48.B and Corollary 48.10]{zeidler}).
	\begin{lemma}\label{lemm:crit}
		Let $u \in \N$ be a critical point of $\I$ over $\N$.
		Assume that $E_\lambda(u)\not=0$ (or, equivalently, $\intO a|u|^q\,dx\not=0$). 
		Then $u$ is a critical point of $ \I$ (over $\W$). 
	\end{lemma}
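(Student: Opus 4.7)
The plan is the standard Lagrange multiplier argument. Define the constraint functional $G \in C^1(\W, \mathbb{R})$ by $G(u) := \langle \I'(u), u \rangle = E_\lambda(u) - \intO a |u|^q\,dx$, so that $\N = G^{-1}(0) \setminus \{0\}$. I would first compute the derivative of $G$ acting on $u$ itself. A direct calculation gives
\[
\langle G'(u), u \rangle = p\,E_\lambda(u) - q\intO a|u|^q\,dx,
\]
and using the Nehari constraint $E_\lambda(u) = \intO a|u|^q\,dx$ this simplifies to $\langle G'(u), u\rangle = (p-q)\,E_\lambda(u) = (p-q)\intO a|u|^q\,dx$.

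Next, the assumption $E_\lambda(u) \neq 0$ (equivalently $\intO a |u|^q \,dx \neq 0$, as these quantities coincide on $\N$) combined with $p>q$ gives $\langle G'(u), u\rangle \neq 0$. In particular $G'(u) \neq 0$, so $\N$ is a $C^1$-manifold of codimension one in a neighborhood of $u$. Since $u$ is a critical point of the restriction $\I|_\N$, the Lagrange multiplier rule (as stated, e.g., in \cite[Theorem 48.B and Corollary 48.10]{zeidler}) yields some $\mu \in \mathbb{R}$ with $\I'(u) = \mu\,G'(u)$ in $\W^{-1,p'}(\Omega)$.

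To conclude, I would test the equality $\I'(u) = \mu G'(u)$ against $u$ itself. Because $u \in \N$, the left-hand side produces $\langle \I'(u), u\rangle = 0$ by definition of the Nehari manifold, while the right-hand side equals $\mu(p-q)E_\lambda(u)$, which is nonzero as soon as $\mu \neq 0$. Hence $\mu = 0$ and therefore $\I'(u) = 0$ in $\W^{-1,p'}(\Omega)$, meaning $u$ is a critical point of $\I$ over the whole $\W$. No serious obstacle is expected: the only delicate point is ensuring $G'(u) \neq 0$ so that the Lagrange rule applies, and this is precisely guaranteed by the hypothesis $E_\lambda(u) \neq 0$ together with the subhomogeneity $q<p$.
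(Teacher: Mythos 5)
Your argument is correct and is exactly the route the paper intends: the statement is given there without a written-out proof, only the remark that it follows ``by standard arguments based on the Lagrange multipliers rule'' with a citation to Zeidler, and your computation $\langle G'(u),u\rangle=(p-q)E_\lambda(u)\neq 0$ followed by testing $\I'(u)=\mu G'(u)$ against $u$ to force $\mu=0$ is precisely that standard argument. No gaps.
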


	\begin{remark}
		Analogs of Lemmas \ref{lem:NehariEmpty} and \ref{lemm:crit} are also valid in the truncated case.
	\end{remark}

	\subsection{Properties of \texorpdfstring{$\I$}{I} and \texorpdfstring{$\wI$}{I-tilde}}\label{sec:relation}
	
	In this subsection, we provide several results on the relation between the functionals $\I$ and $\wI$ and on properties of their critical points.
	
	The following result, in combination with Proposition \ref{prop:charact}, asserts that ground states and least $\wI$-energy solutions of \eqref{eq:P} correspond to the same critical level whenever either $M(\lambda)$ or $\widetilde{M}(\lambda)$ is attained.	
	\begin{proposition}\label{prop:wm=m}
		Let $\lambda \in \mathbb{R}$.
		Then ${M}(\lambda)$ is attained if and only if $\widetilde{M}(\lambda)$ is attained.
		Moreover, ${M}(\lambda) = \widetilde{M}(\lambda) < 0$ for any $\lambda \in \mathbb{R}$. 
	\end{proposition}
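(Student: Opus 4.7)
\begin{proof*}{Proposition \ref{prop:wm=m} (plan)}
The plan pivots on two elementary identities tying $\wI$ to $\I$. First, since $\nabla u_+$ and $\nabla u_-$ have disjoint supports, one has $\wE(u)=E_\lambda(u_+)+\|\nabla u_-\|_p^p$, and therefore $\wI(u)=\I(u_+)+\tfrac{1}{p}\|\nabla u_-\|_p^p\ge \I(u_+)$ for every $u\in\W$. Second, since taking the absolute value preserves the Dirichlet norm, $\wI(|u|)=\I(|u|)=\I(u)$ for every $u\in\W$.

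The inequality $\widetilde{M}(\lambda)\le M(\lambda)$ is then immediate: for $u\in\N$ the Nehari relations yield $|u|\in\wN$ (since $\wE(|u|)=E_\lambda(u)=\intO a|u|^q\,dx=\intO a|u|_+^q\,dx$) with $\wI(|u|)=\I(u)$, so taking the infimum gives the bound. Combined with Remark \ref{rem:m<0}, this also forces $\widetilde{M}(\lambda)\le M(\lambda)<0$.

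For the reverse inequality, I will fix any $u\in\wN$ with $\wI(u)<0$ (which I may assume since $\widetilde{M}(\lambda)<0$). The Nehari identity gives $\wI(u)=-\tfrac{p-q}{pq}\intO a u_+^q\,dx$, whence $\intO a u_+^q\,dx>0$, so $u_+\in\mathcal{A}^+$. If $E_\lambda(u_+)>0$, then the fibered construction of Section \ref{sec:fiber} applied to $u_+$ produces $t:=t_\lambda(u_+)\in(0,\infty)$ with $tu_+\in\N$. The key identity $E_\lambda(u_+)=\intO a u_+^q\,dx-\|\nabla u_-\|_p^p\le \intO a u_+^q\,dx$ yields $t\ge 1$, and by \eqref{eq:INeh},
\begin{equation*}
M(\lambda)\le \I(tu_+)=-\frac{p-q}{pq}\,t^q\intO a u_+^q\,dx\le -\frac{p-q}{pq}\intO a u_+^q\,dx=\wI(u).
\end{equation*}
If instead $E_\lambda(u_+)\le 0$, then $u_+$ is admissible for $\lambda_+^*$ with Rayleigh quotient at most $\lambda$, hence $\lambda\ge\lambda_+^*$, and a case-by-case inspection of Proposition \ref{prop:+} excludes the boundary equality $\lambda=\lambda_+^*$: in the regimes $\intO a\varphi_p^q\,dx=0$ and $\intO a\varphi_p^q\,dx<0$ the value $\lambda_+^*$ is not attained at all, while in the regime $\intO a\varphi_p^q\,dx>0$ any minimizer of $\lambda_+^*$ is a positive multiple of $\varphi_p$, which would force $u_+=c\varphi_p>0$ in $\Omega$, hence $u_-\equiv 0$, hence $E_\lambda(u_+)=\wE(u)=\intO a u_+^q\,dx>0$, contradicting $E_\lambda(u_+)\le 0$. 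Therefore $\lambda>\lambda_+^*=\lambda^*$, so $M(\lambda)=-\infty$ by Theorem \ref{thm:GS}\ref{thm:GS:2} and the inequality is trivial.

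Taking the infimum over $u\in\wN$ yields $M(\lambda)\le\widetilde{M}(\lambda)$, completing the equality $M(\lambda)=\widetilde{M}(\lambda)<0$. The equivalence of attainability then drops out from the same two constructions: a minimizer $u_0$ of $M(\lambda)$ gives a minimizer $|u_0|$ of $\widetilde{M}(\lambda)$, and conversely a minimizer $\tilde u_0$ of $\widetilde{M}(\lambda)$ (necessarily with $E_\lambda((\tilde u_0)_+)>0$ by the dichotomy above, as attainability forces $\widetilde{M}(\lambda)>-\infty$) yields $t_\lambda((\tilde u_0)_+)(\tilde u_0)_+\in\N$ realising $M(\lambda)$. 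The main obstacle is the handling of the sub-case $E_\lambda(u_+)\le 0$, which requires using Proposition \ref{prop:+} in all three sign regimes of $\intO a\varphi_p^q\,dx$ to rule out the borderline equality $\lambda=\lambda_+^*$ and thereby push $\lambda$ strictly above $\lambda^*$.
\end{proof*}
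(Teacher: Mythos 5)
Your argument is correct and uses the same core ingredients as the paper's proof: the decomposition $\wI(u)=\I(u_+)+\tfrac{1}{p}\|\nabla u_-\|_p^p$, the dichotomy on the sign of $E_\lambda(u_+)$, the fibering map applied to $u_+$, and Proposition \ref{prop:+} (together with Theorem \ref{thm:GS}) to dispose of the case $E_\lambda(u_+)\le 0$. The only organizational difference is that the paper argues by contradiction from the assumption $\widetilde{M}(\lambda)<M(\lambda)$ with $\lambda\le\lambda^*$ (exploiting $u_-\not\equiv 0$ and the strict inequality), whereas you obtain the pointwise bound $M(\lambda)\le\wI(u)$ directly via the observation $t_\lambda(u_+)\ge 1$ and you spell out the attainability equivalence in both directions, which the paper's written proof leaves largely implicit; both routes are sound.
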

	\begin{proof}
		Observe that $\widetilde{M}(\lambda) \leq M(\lambda)$ for any $\lambda \in \mathbb{R}$. 
		Indeed, if $u \in \N$, then $|u| \in \N$ and hence $|u| \in \wN$.
		Since $\wI(|u|) = \I(|u|) =  \I(u)$, the inequality $\widetilde{M}(\lambda) \leq M(\lambda)$ follows.
		As a consequence, we have $\widetilde{M}(\lambda) = M(\lambda) = -\infty$ for any $\lambda>\lambda^*$, see Theorem \ref{thm:GS} \ref{thm:GS:2}.
		Moreover, $\widetilde{M}(\lambda), M(\lambda) < 0$ for any $\lambda \in \mathbb{R}$, see Remark \ref{rem:m<0}.
		
		Suppose, by contradiction, that $\widetilde{M}(\lambda) < M(\lambda)$ for some $\lambda \leq \lambda^*$.
		That is, there exists $u \in \wN$ such that $ \wI(u) < M(\lambda) <0$.
		Clearly, we have $u_+ \not\equiv 0$ a.e.\ in $\Omega$.
		We also conclude that $u_- \not\equiv 0$ a.e.\ in $\Omega$, since in the case $u \geq 0$ a.e.\ in $\Omega$ we would get $u \in \N$ and  $\wI(u) =  \I(u) \geq M(\lambda)$.
		The assumptions on $u$ yield
		\begin{equation}\label{eq:int>0}
		\wE(u) =  \int_\Omega |\nabla u_-|^p \,dx + E_\lambda(u_+)  = \intO a u_+^q \, dx > 0.
		\end{equation}
		If $E_\lambda(u_+) > 0$, then $t_\lambda(u_+) u_+ \in \N$ and
		$$
		\I(u_+) 
		\geq
		\min_{t > 0} \I(t u_+)
		=
		\I(t_\lambda(u_+) u_+) \geq M(\lambda),
		$$
		which is impossible since
		$$
		M(\lambda) >  \wI(u) 
		= 
		\frac{1}{p} \int_\Omega |\nabla u_-|^p \,dx 
		+ 
		\I(u_+) 
		> 
		\I(u_+).
		$$	
		Therefore, we have $E_\lambda(u_+) \leq 0$ and hence, recalling that $u_+ \not\equiv 0$ a.e.\ in $\Omega$, we obtain
		\begin{equation}\label{eq:l*+min}
		\frac{\int_\Omega |\nabla u_+|^p \,dx}{\int_\Omega u_+^p \,dx} 
		\leq 
		\lambda 
		\leq 
		\lambda^* 
		=
		\lambda^*_+
		\leq \frac{\int_\Omega |\nabla u_+|^p \,dx}{\int_\Omega u_+^p \,dx}.
		\end{equation}
		Here, the equality is given by Proposition \ref{prop:+}, and the last inequality follows from the definition \eqref{eq:l+} of $\lambda^*_+$ and the fact that $\intO a u_+^q \, dx > 0$, see \eqref{eq:int>0}.
		We conclude from \eqref{eq:l*+min} that  $u_+$ is a minimizer of $\lambda^*_+$.
		However, according to Proposition \ref{prop:+}, $\lambda^*_+$ is either not attained, or attained by $t\varphi_p$ with some $t \neq 0$, which contradicts the fact that $u_- \not\equiv 0$ a.e.\ in $\Omega$.
	\end{proof}
	
	\begin{remark}
		In general, $M(\lambda)$ might possess sign-changing minimizers (since the strong maximum principle is not applicable to nonnegative solutions of \eqref{eq:P}), while the set of minimizers of $\widetilde{M}(\lambda)$ consists solely of nonnegative functions.
		That is, the set of minimizers of $\widetilde{M}(\lambda)$ is contained in that of $M(\lambda)$.
	\end{remark}
	
	\begin{remark}\label{rem:positivity} 
		Let $u$ be a {\it nonnegative} solution of \eqref{eq:P}. 
		If $u$ has a zero point $x_0$ in $\Omega_a^+$, then, 
		according to the strong maximum principle, 
		$u\equiv 0$ in a connected component of $\Omega_a^+$ which contains $x_0$. 
		In other words, if $u\not\equiv 0$ in some connected component of $\Omega_a^+$, 
		then $u>0$ in that connected component. 
		An additional assumption on $u$ guaranteeing that $u> 0$ in the whole open set $\Omega_a^+$ is presented in Lemma \ref{lem:positivity}. 
	\end{remark}

	The proofs of all remaining results of this subsection will be given only for the untruncated functional $\I$.
	The case of $\wI$ can be treated analogously.
	\begin{lemma}\label{lem:positivity}
		Let $u$ be a local minimum point of $ \I$ or of $ \wI$ which is nonnegative in $\Omega_a^+$.
		Then $u>0$ in $\Omega_a^+$.
	\end{lemma}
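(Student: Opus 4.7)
The plan is to combine Remark~\ref{rem:positivity}, which already handles positivity on those connected components of $\Omega_a^+$ where $u$ does not vanish identically, with a perturbation argument---the one genuine use of the local-minimum hypothesis---that rules out $u\equiv 0$ on any connected component of $\Omega_a^+$.

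First, I would recall that, since $u$ is a local minimum point, it is in particular a critical point of $\I$ (or $\wI$), hence a nonnegative-in-$\Omega_a^+$ weak solution of \eqref{eq:P}. Let $\omega$ be any connected component of $\Omega_a^+$. By Remark~\ref{rem:positivity}, applied to $\omega$, either $u\equiv 0$ in $\omega$ or $u>0$ in $\omega$. It therefore suffices to exclude the first alternative.

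Suppose, towards a contradiction, that $u\equiv 0$ on some connected component $\omega$ of $\Omega_a^+$. Fix any nonnegative $\psi\in C_0^\infty(\omega)\setminus\{0\}$ and consider the family $v_\varepsilon:=u+\varepsilon\psi\in\W$ for $\varepsilon>0$. Because $u\equiv 0$ on $\omega$ and $\mathrm{supp}\,\psi\subset\omega$, we have $v_\varepsilon=\varepsilon\psi\ge 0$ on $\omega$ and $v_\varepsilon=u$ elsewhere. A direct computation---valid for both $\I$ and $\wI$, since $u_+\equiv 0$ on $\omega$ as well---yields
$$
\I(v_\varepsilon)-\I(u)
=\frac{\varepsilon^p}{p}\int_\omega|\nabla\psi|^p\,dx
-\frac{\lambda\varepsilon^p}{p}\int_\omega\psi^p\,dx
-\frac{\varepsilon^q}{q}\int_\omega a\psi^q\,dx.
$$
Since $1<q<p$ and $\int_\omega a\psi^q\,dx>0$ (recall $a>0$ on $\omega$), the last term dominates as $\varepsilon\to 0^+$, so $\I(v_\varepsilon)<\I(u)$ for all sufficiently small $\varepsilon>0$. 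As $v_\varepsilon\to u$ in $\W$, this contradicts the local minimum property. Hence $u\not\equiv 0$ on every connected component of $\Omega_a^+$, and by Remark~\ref{rem:positivity} we conclude $u>0$ in the whole of $\Omega_a^+$.

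The main obstacle is the perturbation step: one must ensure that the energy strictly decreases along some admissible small variation supported in the putative ``dead'' component. The key point is a separation of scales in $\varepsilon$---the decisive contribution comes from the $\varepsilon^q$ weight term, which beats the $\varepsilon^p$ Dirichlet and eigenvalue terms as $\varepsilon\to 0^+$. The subhomogeneous exponent $q<p$ is precisely what enables this; in the critical case $q=p$ the argument would collapse.
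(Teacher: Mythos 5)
Your proof is correct and follows essentially the same route as the paper: reduce via the strong maximum principle to the case $u\equiv 0$ on a connected component of $\Omega_a^+$, then perturb by a small nonnegative bump supported there and use that the $\varepsilon^q$ weight term dominates the $\varepsilon^p$ terms (the paper phrases the same computation as $\I(u+t\varphi)=\I(u)+\I(t\varphi)<\I(u)$). No gaps.
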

	\begin{proof}
		Suppose, contrary to our claim, that $u(x_0)=0$ for some $x_0 \in \Omega_a^+$. 
		By the strong maximum principle, we have $u \equiv 0$ in a connected component $A$ of $\Omega_a^+$ containing $x_0$.
		Consider any nonnegative $\varphi \in C_0^\infty(\Omega) \setminus \{0\}$ with the support in $A$. 
		Since $q<p$ and $\intO a \varphi^q \,dx >0$, we have $ \I(t\varphi)<0$ for any sufficiently small $t>0$.
		This yields
		$$
		\I(u) 
		\leq 
		\I(u+t\varphi)
		=
		\I(u) +  \I(t\varphi)
		< \I(u),
		$$
		which contradicts our assumption that $u$ is a local minimum point of $\I$.
	\end{proof}
	
	\begin{remark}
		One could wonder whether it is possible to rid of the nonnegativity assumption in Lemma \ref{lem:positivity}.
		In general, this assumption is vital. In \cite[Section 5]{BDSWPT}, it is shown that there are domains for which a least energy \textit{nodal} solution to the zero Dirichlet problem for the equation $-\Delta u = |u|^{q-2}u$, i.e., \eqref{eq:P} with $a \equiv 1$ and $\lambda=0$, is a point of local minimum.
	\end{remark}
	
	\begin{remark}
		It would be interesting to know whether the result of Lemma \ref{lem:positivity} remains valid for ground states or least $\wI$-energy solutions of \eqref{eq:P}, cf.\ Lemma \ref{lem:negative} below.
	\end{remark}

	Another general results are given in the following two lemmas.
	\begin{lemma}\label{lem:local-negative}
		Let $u$ be a local minimum point of $\I$ (resp.~$ \wI$).
		Then $ \I(u)<0$ (resp.~$ \wI(u)<0$).
	\end{lemma}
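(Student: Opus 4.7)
The plan is to split on the sign of $\intO a|u|^q\,dx$ and, in the degenerate case where the Nehari fiber collapses, to perturb in a direction transverse to it. I will treat $\I$; the argument for $\wI$ is verbatim identical. Since any nonzero critical point of $\I$ lies in $\N$, the identity \eqref{eq:EonN1-0} gives $\I(u) = -\frac{p-q}{pq}\intO a|u|^q\,dx$, so $\I(u) < 0$ is equivalent to $\intO a|u|^q\,dx > 0$, and it suffices to rule out the three alternatives $u=0$, $\intO a|u|^q\,dx<0$, and $\intO a|u|^q\,dx=0$.

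First I would dispatch the two easier cases by contradiction with local minimality. If $u=0$, I pick a nonnegative $v \in C_0^\infty(\Omega)\setminus\{0\}$ with support in $\Omega_a^+$; since $q<p$ and $\intO a v^q\,dx > 0$, the fiber $t \mapsto \I(tv)$ is strictly negative for small $t>0$, contradicting $\I(0)=0$ being a local minimum. If instead $u \neq 0$ and $\intO a|u|^q\,dx < 0$, the fibered functional analysis of Section \ref{sec:fiber} shows that $t=1$ is the global \emph{maximum} of $t \mapsto \I(tu)$ on $(0,+\infty)$, so $\I(tu) < \I(u)$ for $t$ close to $1$, again ruling out a local minimum.

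The hard part will be the degenerate case $\intO a|u|^q\,dx = 0$, in which $u \in \N$ forces $E_\lambda(u) = 0$ and $\I(u) = 0$, so $t\mapsto \I(tu)$ is identically zero and one cannot decrease $\I$ by rescaling alone. I would subdivide further. If $u \equiv 0$ throughout $\Omega_a^+$, a disjoint-support perturbation with nonnegative $w \in C_0^\infty(\Omega)\setminus\{0\}$, $\operatorname{supp} w \subset \Omega_a^+$, works: by the $C^1$-regularity (Remark \ref{rem:reg}), $\nabla u$ also vanishes on $\operatorname{supp} w$, so $\I(u+sw)$ splits as $\I(u)$ plus the energy of $sw$ alone, which becomes negative for small $s>0$ because $q<p$ and $\intO a w^q\,dx > 0$. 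Otherwise there exists $x_0 \in \Omega_a^+$ with $u(x_0) > 0$ (after possibly replacing $u$ by $-u$, which preserves local minimality by evenness of $\I$), and I would combine the fiber parameter $t$ with a transverse perturbation by studying $f(t,s) := \I(t(u+s\varphi))$ for a nonnegative $\varphi \in C_0^\infty(\Omega)\setminus\{0\}$ supported in a small ball around $x_0$ inside $\{u>0\}\cap\Omega_a^+$. Using that $u$ is critical, a short computation yields $\partial_s f(t,0) = (t^p - t^q)\intO a|u|^{q-2}u\,\varphi\,dx$, with the integrand strictly positive on $\operatorname{supp}\varphi$; choosing $t$ slightly below $1$ makes this derivative negative, so $f(t,s) < 0 = \I(u)$ for small $s>0$, while $t(u+s\varphi)$ lies in any prescribed neighborhood of $u$. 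This contradicts local minimality and finishes the argument.
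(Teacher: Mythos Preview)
Your proof is correct and follows essentially the same approach as the paper: both dispatch the nondegenerate cases via the fiber $t\mapsto\I(tu)$ and, when $E_\lambda(u)=\intO a|u|^q\,dx=0$, use a two-parameter perturbation combining scaling with a transverse direction $v$ for which $\intO a|u|^{q-2}uv\,dx\neq 0$ (the paper obtains such $v$ by invoking Lemma~\ref{lem:positivity} to exclude $u\equiv 0$ on $\Omega_a^+$, while you handle that subcase directly by a disjoint-support splitting and then construct $\varphi$ explicitly). One small caveat: the argument for $\wI$ is not quite ``verbatim identical'' since $\wI$ is not even, but this is harmless---any local minimum of $\wI$ is automatically a nonnegative solution, so your $u\mapsto -u$ reflection step is simply unnecessary there.
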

	\begin{proof}
		Observe that $u$ is a solution of \eqref{eq:P}, and hence 
		$u \in \N$ and $t_\lambda(u)=1$.
		Suppose first that $ \I(u)>0$. 
		By \eqref{eq:INeh}, $t_\lambda(u)$ is the global maximum point of the function $t \mapsto \I(tu)$, $t>0$, and it is clear that $\I(u)>\I(tu)$ for any $t \neq 1$.
		This contradicts our assumption that 
		$u$ is a local minimizer of $\I$. 
		
		Suppose now that $ \I(u)=0$.
		We get from \eqref{eq:EonN1-0} that $E_\lambda(u)=0=\intO a |u|^q \,dx$. 
		Clearly, $u \not\equiv 0$ in $\Omega$ since $0$ is not a local minimum point of $\I$ in view of the assumption $q<p$ and the fact that
		$\mathcal{A}^+\not=\emptyset$ (see also Lemma \ref{lem:positivity}).
		Moreover, $u$ is not a critical point of the functional $w \mapsto \intO a |w|^q \,dx$. 
		Indeed, if $u$ is such a critical point, then $\intO a |u|^{q-2} u v \,dx =0$ for all $v \in \W$. 
		This yields $a |u|^{q-2} u \equiv 0$ in $\Omega$ and hence $u \equiv 0$ in $\Omega_a^+$.
		However, Lemma \ref{lem:positivity} implies that $u>0$ in $\Omega_a^+$, which is a contradiction. 	
		Thus, we can find $v \in \W$ such that $\intO a |u|^{q-2} u v \,dx < 0$. 
		Since $u$ is a solution of \eqref{eq:P}, we have  $\frac{1}{p}\langle E_\lambda'(u),v\rangle = \intO a |u|^{q-2} u v \,dx < 0$.
		Therefore, we see that $tu$ with $t>1$ is not a critical point of $ \I$:
		\begin{align*}
			\langle  \I'(tu),v\rangle 
			&=
			t^{q-1} \left(\frac{t^{p-q}}{p} \langle E_\lambda'(u),v\rangle - \intO a |u|^{q-2} u v \, dx \right)
			\\
			&=
			t^{q-1} \left(t^{p-q} - 1\right) \intO a |u|^{q-2} u v \,dx
			< 0.
		\end{align*}
		At the same time, we have $ \I(tu)=0$ for any $t>0$ since $E_\lambda(u)=0=\intO a |u|^q \,dx$. 
		Thus, by the mean value theorem, for any $t>1$ and for any sufficiently small $s>0$ there exists $s_0 \in (0,s)$ such that
		$$
		\I(tu + sv) =  \I(tu) + s \langle  \I'(tu+s_0v),v\rangle 
		=
		s \langle  \I'(tu+s_0v),v\rangle
		< 0.
		$$
		Consequently, taking $(t-1)$ and $s$ small enough, we conclude that $tu + sv$ belongs to a small neighborhood of the local minimum point $u$, but $ \I(tu + sv) < 0=\I(u)$. 
		A contradiction.
	\end{proof}
	
	\begin{remark}
		Notice that in Lemmas \ref{lem:positivity} and \ref{lem:local-negative} the local minimum point $u$ of $\I$ is \textit{not} required to be of constant sign in $\Omega$.
	\end{remark}

	\begin{lemma}\label{lem:negative}
		Let $u$ be a nonzero critical point of $ \I$ (resp.\ of $\wI$) such that $u \equiv 0$ in $\Omega_a^+$. 
		Let at least one of the following assumptions be satisfied:
		\begin{enumerate}[label={\rm(\roman*)}]	
			\item\label{lem:negative:1} $\Omega_a^0$ has empty interior.
			\item\label{lem:negative:2} $u$ has a constant sign in $\Omega$. 
			\item\label{lem:negative:3} $p=2$.
			\item\label{lem:negative:4} $N=1$.
		\end{enumerate}
		Then $ \I(u) > 0$ (resp.\ $\wI(u)>0$). 
	\end{lemma}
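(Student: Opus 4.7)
The plan is to use $u \in \N$ together with \eqref{eq:EonN1-0} to obtain
$$\I(u) = -\frac{p-q}{pq}\,\intO a|u|^q\,dx = -\frac{p-q}{pq}\,\int_{\Omega_a^-} a|u|^q\,dx \geq 0,$$
where the second equality uses the hypothesis $u\equiv 0$ in $\Omega_a^+$. Thus it suffices to exclude equality, and I would argue by contradiction: suppose $\I(u)=0$. Since $a<0$ in $\Omega_a^-$, the vanishing of $\int_{\Omega_a^-} a|u|^q\,dx$ forces $u\equiv 0$ a.e.\ in $\Omega_a^-$ as well, so that $u$ vanishes a.e.\ on the open set $\Omega_a^+\cup\Omega_a^-$ and $a|u|^{q-2}u\equiv 0$ a.e.\ in $\Omega$. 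The weak formulation of \eqref{eq:P} then reduces to
$$\intO |\nabla u|^{p-2}\nabla u\nabla \varphi\,dx = \lambda \intO |u|^{p-2} u\varphi\,dx \quad \text{for all } \varphi\in \W,$$
i.e., $u$ is a nontrivial eigenfunction of the Dirichlet $p$-Laplacian at $\lambda$ which vanishes on the nonempty open set $\Omega_a^+$.

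Having reduced the problem to this eigenvalue setup, I would dispose of each subcase separately. Under \ref{lem:negative:1}, the set $\Omega_a^+\cup \Omega_a^-$ is dense in $\Omega$ since $\Omega_a^0$ has empty interior; as $u\in C^{1,\beta}_0(\overline{\Omega})$ by Remark \ref{rem:reg} and vanishes on this dense open set, continuity gives $u\equiv 0$ in $\Omega$, a contradiction. Under \ref{lem:negative:2}, $u$ is an eigenfunction of constant sign, and by the characterization of sign-constant eigenfunctions of $-\Delta_p$ (cf.\ \cite{anane1987}) one has $u=t\varphi_p$ for some $t\neq 0$; but $\varphi_p>0$ in $\Omega$, contradicting $u\equiv 0$ in $\Omega_a^+\neq\emptyset$. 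Under \ref{lem:negative:3}, the equation becomes $-\Delta u=\lambda u$, to which the classical unique continuation principle (Aronszajn) applies, forcing $u\equiv 0$. Under \ref{lem:negative:4}, the equation reduces to the one-dimensional ODE $-(|u'|^{p-2}u')'=\lambda|u|^{p-2}u$; at any interior point of an interval where $u\equiv 0$ one has $u=u'=0$, and the standard uniqueness theory for this ODE yields $u\equiv 0$.

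The case of $\wI$ is identical, since any critical point of $\wI$ is a nonnegative weak solution of \eqref{eq:P} and satisfies the same Nehari identity \eqref{eq:EonN1-0} with $\wN$ and $u_+^q = |u|^q$. The main conceptual obstacle is that unique continuation for the $p$-Laplacian is not available in full generality when $p\neq 2$ and $N\geq 2$, which is precisely why the four separate hypotheses \ref{lem:negative:1}--\ref{lem:negative:4} are imposed: each one bypasses the failure of unique continuation in a different way, either by forcing the zero set of $u$ to be large (\ref{lem:negative:1}), by invoking the simplicity of $\lambda_1(p)$ (\ref{lem:negative:2}), or by appealing to tractable regimes ($p=2$ or $N=1$) in which unique continuation is classical.
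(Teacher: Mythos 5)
Your proof is correct and follows essentially the same route as the paper's: the Nehari identity \eqref{eq:EonN1-0} gives $\I(u)\geq 0$, and equality forces $u$ to be a nontrivial eigenfunction of the Dirichlet $p$-Laplacian vanishing on the open set $\Omega_a^+$, which is then excluded case by case. The only (harmless) deviations are in case \ref{lem:negative:2}, where you invoke the classification of sign-constant eigenfunctions while the paper applies the strong maximum principle directly, and in case \ref{lem:negative:4}, where you appeal to ODE uniqueness from zero Cauchy data (which, given the non-Lipschitz nature of the equation for $p\neq 2$, is most cleanly justified via the first integral $|u'|^p/p'+\lambda|u|^p/p=\mathrm{const}$ or, as the paper does, via the explicit $\sin_p$-structure of one-dimensional eigenfunctions).
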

	\begin{proof}	
		Since $u \equiv 0$ in $\Omega_a^+$, we have $\intO a |u|^q \,dx \leq 0$.
		Thus, $ \I(u) \geq 0$ by \eqref{eq:EonN1-0}.
		Suppose, by contradiction, that $ \I(u)=0$, and hence $\intO a |u|^q \,dx = 0$.
		This yields $\text{supp}\, u \subset\Omega_a^0$. 
		Recalling that  $u \in C^{1,\beta}_0(\overline{\Omega})$ for some $\beta\in(0,1)$ by Remark \ref{rem:reg}, we conclude that $u\equiv 0$ in $\Omega$ under the assumption \ref{lem:negative:1}, which is impossible.
		Since $\text{supp}\, u \subset\Omega_a^0$, we see that $u$  weakly solves the equation $-\Delta_p u = \lambda |u|^{p-2} u$ in $\Omega$, and $u$ is zero in an open subset of $\Omega$. 
		Under the assumption \ref{lem:negative:2}, we have $u \equiv 0$ in $\Omega$ thanks to the strong maximum principle, a contradiction.
		If the assumption \ref{lem:negative:3} holds, then $u$ obeys the unique continuation property (see, e.g., \cite{muller}) and hence $u$ cannot vanish in an open subset of $\Omega$.
		Finally, under the assumption \ref{lem:negative:4}, all eigenfunctions of the $p$-Laplacian have explicit structure and cannot vanish in open intervals as well, see, e.g., \cite{DM}. 
	\end{proof}
	\begin{remark}
		The proof of Lemma \ref{lem:negative} in the case $p=2$ relies of the unique continuation property (UCP).
		Assuming the UCP is true for some $p \neq 2$, the statement of Lemma \ref{lem:negative} can be generalized accordingly.
		However, to the best of our knowledge, the validity of the UCP is not known for sign-changing eigenfunctions of the $p$-Laplacian in higher dimensions $N \geq 2$.
	\end{remark}
	
	The following result complements \cite[Proposition 3.9 (1)]{KQU}.
	\begin{proposition}\label{NoCriticalValue} 
		Assume that $\lambda\le \lambda_-^*$. 
		Then, $\I$ and $\wI$ have no positive critical values. 
	\end{proposition}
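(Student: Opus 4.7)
The plan is a direct argument via the Nehari identity \eqref{eq:EonN1-0} combined with Lemma \ref{lem:NehariEmpty}. Suppose, towards a contradiction, that $u$ is a critical point of $\I$ with $\I(u) > 0$. Since $\I(0) = 0$, we must have $u \neq 0$, and since every nonzero critical point of $\I$ lies on $\N$, we have $u \in \N$. By the Nehari identity \eqref{eq:EonN1-0},
$$
\I(u) = -\frac{p-q}{pq}\, E_\lambda(u) = -\frac{p-q}{pq}\,\intO a|u|^q\,dx,
$$
so the assumption $\I(u) > 0$ together with $q < p$ forces $E_\lambda(u) < 0$ and $\intO a |u|^q\,dx < 0$; that is, $u \in \N \cap \mathcal{A}^-$.

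By Lemma \ref{lem:NehariEmpty} \ref{lem:NehariEmpty:2}, the set $\N \cap \mathcal{A}^-$ is nonempty if and only if $\lambda > \lambda_-^*$. Hence the assumption $\lambda \le \lambda_-^*$ yields $\N \cap \mathcal{A}^- = \emptyset$, contradicting the existence of such $u$. Consequently, $\I$ admits no positive critical value.

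The argument for $\wI$ proceeds in exactly the same way: a critical point $u$ of $\wI$ is necessarily nonnegative (see Section \ref{sec:relation}), so $u_+ = u$ and the analog of \eqref{eq:EonN1-0} over $\wN$ reads $\wI(u) = -\frac{p-q}{pq}\wE(u) = -\frac{p-q}{pq}\intO a u_+^q\,dx$; one then invokes the truncated analog of Lemma \ref{lem:NehariEmpty}, noting (as observed in the remark following Proposition \ref{prop:+}) that $\lambda_-^*$ is unchanged under the truncation. There is no real obstacle here — the only mild point worth verifying is that the equivalence of the signs of $E_\lambda(u)$ and $\intO a|u|^q\,dx$ on $\N$ is genuine (which is immediate from the defining identity $\E(u) = \intO a |u|^q\,dx$ in \eqref{eq:nehari}), so that $\I(u)>0$ cannot be achieved without landing in $\mathcal{A}^-$.
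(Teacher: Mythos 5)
Your proof is correct and is essentially the paper's argument: both rest on the Nehari identity \eqref{eq:EonN1-0} together with Lemma \ref{lem:NehariEmpty}\,\ref{lem:NehariEmpty:2}, the only difference being that you phrase it as a contradiction ($\I(u)>0$ would force $u\in\N\cap\A^-=\emptyset$) while the paper argues directly ($\N\cap\A^-=\emptyset$ gives $\intO a|u|^q\,dx\ge 0$, hence $\I(u)\le 0$). For the truncated functional the paper simply observes that a nonzero critical point of $\wI$ is a nonnegative solution and hence a critical point of $\I$, which is marginally shorter than invoking the truncated analogs, but your route works equally well.
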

	\begin{proof} 
		Let $u$ be a nonzero critical point of $ \I$, and hence $u\in \N$. 
		Since $\N \cap \mathcal{A}^-=\emptyset$ for $\lambda \leq \lambda_-^*$ (see Lemma \ref{lem:NehariEmpty}), 
		we have $\intO a|u|^p \,dx \geq 0$, which yields $\I(u) \le 0$ by \eqref{eq:EonN1-0}. 
		If $u$ is a nonzero critical point of $\wI$, then $u$ is a nonzero nonnegative solution of \eqref{eq:P} (see Section~\ref{sec:mainresults}). 
		That is, $u$ is a nonzero critical point of $\I$, and the conclusion follows as above.
	\end{proof} 
	
	In contrast to Proposition \ref{NoCriticalValue}, the functionals $\I$ and $\wI$ might possess positive critical values for sufficiently large $\lambda$.
	\begin{proposition}\label{prop:nonlipschitz} 
		Assume that $a\equiv \text{const}<0$ in some open ball $B \subset \Omega_a^-$.
		Then there exists a sufficiently large $\bar{\lambda}$ such that \eqref{eq:P} possesses a nonnegative solution with compact support in $B$ and positive energy for any $\lambda > \bar{\lambda}$.
	\end{proposition}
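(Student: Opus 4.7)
Write $B = B_R(x_0)$, and let $c := -a_0 > 0$, where $a_0$ is the constant value of $a$ on $B$. The strategy is a scaling reduction: we seek a single nonzero nonnegative weak solution $v$ of the parameter-free model equation
\begin{equation*}
-\Delta_p v = v^{p-1} - c\, v^{q-1} \quad \text{in } \mathbb{R}^N,
\end{equation*}
with compact support $\mathrm{supp}\,v \subset \overline{B_{R_0}(0)}$ for some $R_0 > 0$. Given such a $v$, a direct chain-rule computation shows that the rescaled function
\[
u_\lambda(x) := \lambda^{-1/(p-q)}\, v\bigl(\lambda^{1/p}(x-x_0)\bigr)
\]
is a nonzero nonnegative weak solution of $-\Delta_p u = \lambda u^{p-1} - c\, u^{q-1}$ in $\mathbb{R}^N$, supported in $\overline{B_{R_0\lambda^{-1/p}}(x_0)}$. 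Setting $\bar\lambda := (R_0/R)^p$, for every $\lambda > \bar\lambda$ this support is compactly contained in $B$, and since $a \equiv -c$ on $B$ while $u_\lambda$ vanishes outside $B$, the extension of $u_\lambda$ by zero is a nonzero nonnegative element of $\W$ weakly solving \eqref{eq:P}. As $u_\lambda$ is nonzero, of constant sign, and identically zero in $\Omega_a^+$, Lemma~\ref{lem:negative}~\ref{lem:negative:2} yields $\I(u_\lambda) > 0$.

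The substance of the proof is therefore the construction of $v$. I would fix $R_1 > 0$ large enough that $\lambda_1(p, B_{R_1}) < 1$ (possible since $\lambda_1(p, B_R) = O(R^{-p})$ by the scaling of the first eigenvalue) and apply the mountain pass theorem to the truncated functional
\[
\mathcal{J}(v) := \frac{1}{p}\|\nabla v\|_p^p - \frac{1}{p}\|v_+\|_p^p + \frac{c}{q}\|v_+\|_q^q
\]
on $W_0^{1,p}(B_{R_1})$. Because $q < p$, the nonnegative subhomogeneous term $\|v_+\|_q^q$ dominates on small $W^{1,p}$-scales, so $\mathcal{J}$ has a strict local minimum at $0$; along the positive first eigenfunction $\varphi$ on $B_{R_1}$, $\mathcal{J}(t\varphi) \to -\infty$ as $t\to +\infty$ thanks to $\lambda_1(p, B_{R_1}) < 1$; and the Palais--Smale condition follows from the standard $p$-Laplacian arguments given the superhomogeneous character of the dominant term at infinity. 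This produces a nonzero nonnegative critical point $v \in C_0^{1,\beta}(\overline{B_{R_1}})$, i.e., a nonnegative weak solution of the model equation in $B_{R_1}$ vanishing on $\partial B_{R_1}$.

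The principal obstacle is to show that $v$ in fact vanishes in a full neighborhood of $\partial B_{R_1}$---only then does its extension by zero to $\mathbb{R}^N$ weakly solve the model equation on the whole space. This is the classical \emph{dead-core} (flatness) phenomenon for subhomogeneous equations with a sink: near $\partial B_{R_1}$ the negative term $-c\,v^{q-1}$ dominates the positive term $v^{p-1}$, because $q < p$ and $v$ is small there, and a localized energy argument on annuli $B_{R_1}\setminus B_r$, combined with a Sobolev-type inequality, leads to a differential inequality of the form $\Phi'(r) \geq C\,\Phi(r)^\theta$ with $\theta < 1$ on a left neighborhood of $R_1$, which forces $\Phi \equiv 0$ there. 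Alternatively, one may compare $v$ with an explicit radial supersolution of the form $W(r) = A(R_1 - r)_+^{p/(p-q)}$ via a weak comparison principle; either variant, as carried out in \cite{diaz,DH,DHI1,FLS} in closely related settings, gives $\mathrm{supp}\,v \Subset B_{R_1}$, at which point the extension of $v$ by zero is a weak solution of the model equation on $\mathbb{R}^N$ with the required compact support, completing the construction and hence the proof.
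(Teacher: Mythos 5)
Your outer skeleton is exactly the paper's: a compactly supported nonnegative solution of the autonomous model equation $-\Delta_p v = v^{p-1}-c\,v^{q-1}$, rescaled by $u_\lambda(x)=\lambda^{-1/(p-q)}v(\lambda^{1/p}(x-x_0))$ so that for $\lambda$ large its support fits inside $B$, extended by zero to $\Omega$, with positive energy supplied by Lemma~\ref{lem:negative}~\ref{lem:negative:2}; the scaling exponents and the support radius $R_0\lambda^{-1/p}$ are correct. The paper, however, takes the existence of a compactly supported \emph{radial} solution of the model problem directly from \cite[Theorem 1]{DH} ($N=1$) and \cite[Proposition 2]{FLS} ($N\geq 2$), where flatness is built into the ODE construction, and only imports the rescaling from \cite[Proposition 5.1]{DHI1}. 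You instead try to manufacture the model solution yourself, by a mountain pass on a large ball $B_{R_1}$ followed by a dead-core argument near $\partial B_{R_1}$, and that last step is where the proposal has a genuine gap.

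The mountain pass geometry and the Palais--Smale condition are fine (the argument of Lemma~\ref{lem:PS} applies since $1>\lambda_1(p,B_{R_1})$ and the limit profile would have to be a nonnegative eigenfunction), but the claim that the resulting solution $v$ must vanish in a neighborhood of $\partial B_{R_1}$ does not follow from the local reasoning you sketch. The only smallness you have near the boundary is what continuity (indeed $C^{1,\beta}$ regularity) gives: $v\leq \epsilon_0\approx L\delta$ on a collar of thickness $\delta$. The dead-core barrier for $\Delta_p v \geq c\,v^{q-1}$ produces a vanishing region only at distance of order $\epsilon_0^{(p-q)/p}$ from the inner boundary of the collar, and since $(p-q)/p<1$ this distance is \emph{much larger} than $\delta$ as $\delta\to 0$; the quantitative hypothesis of the dead-core estimate is never met. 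The comparison with $W(r)=A(R_1-r)_+^{p/(p-q)}$ faces the same obstruction, compounded by the fact that the source term $+v^{p-1}$ destroys the monotonicity needed for a weak comparison principle unless one first proves smallness of $v$ on a sufficiently thick layer. In fact the statement your argument would prove --- that \emph{every} nonnegative solution of the model Dirichlet problem on a ball vanishes near the boundary --- is false: already in dimension one the phase-plane analysis of $-(|u'|^{p-2}u')'=u^{p-1}-c\,u^{q-1}$ produces, for suitable interval lengths, positive solutions that reach zero at an endpoint with nonzero slope, hence with no dead core. So compact support (or at least flatness, i.e.\ vanishing normal derivative) must be extracted from specific structural information about the particular solution, which is precisely what the radial shooting/ODE constructions in \cite{DH,FLS} (and the book \cite{diaz}) provide and what a generic mountain pass critical point does not. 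To repair the proposal, replace the variational construction of $v$ by a citation of those results, as the paper does, or supply an argument showing that the solution you construct is radial, monotone, and has zero boundary derivative before invoking the dead-core machinery.
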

	\begin{proof}
		Consider the Dirichlet problem 
		\begin{equation}\label{eq:Prad}
		\left\{
		\begin{aligned}
			-\Delta_p u &= |u|^{p-2}u - |u|^{q-2}u 
			&&\text{in}\ B_R, \\
			u&=0 &&\text{on}\ \partial B_R,
		\end{aligned}
		\right.
	\end{equation}
		where $B_R \subset \mathbb{R}^N$ is an open ball of some radius $R>0$ centered at the origin.
		It is known from, e.g., \cite[Theorem 1]{DH} (for $N=1$) and \cite[Proposition 2]{FLS} (for $N \geq 2$) that there exists $R>0$ such that \eqref{eq:Prad} has a radially symmetric nonnegative solution $u$ with compact support in $B_R$.
		Moreover, we have $u \in C_0^1(\overline{B_R})$, see Remark \ref{rem:reg}.
		By considering a function $v \in C_0^1(\overline{B})$ defined as $v(x) = A u(C (x-x_0))$ with appropriate constants $A,C,x_0>0$, it can be derived in much the same way as in \cite[Proposition 5.1]{DHI1} that $v$ is a compact support solution of \eqref{eq:P} in the ball $B$ (given in the statement of the proposition)
		for any sufficiently large $\lambda$. 
		Extending $v$ by zero outside of $B$, we obtain a nonnegative solution of \eqref{eq:P} in $\Omega$ (or even in the whole $\mathbb{R}^N$).
		Applying Lemma \ref{lem:negative}, we conclude that $v$ has positive energy.
	\end{proof}
	\begin{remark}
		The assumption of Proposition \ref{prop:nonlipschitz} on the weight $a$ can be weakened to cover some nonconstant weights, see, e.g., \cite[Remark 5.2 and Theorem 5.1]{DHI1} for the linear case $p=2$.
	\end{remark}

	\subsection{Palais--Smale type conditions}
	
	We provide two standard but useful compactness results.
	We denote by $\|\cdot\|_*$ the usual operator norm. 
	\begin{lemma}\label{lem:conv-gs-PS}
		Let $\{\lambda_n\} \subset \mathbb{R}$ converge to some $\lambda \in \mathbb{R}$.
		Let $\{u_n\}$ be a bounded sequence in $\W$ such that $\|I'_{\lambda_n}(u_n)\|_* \to 0$ as $n \to +\infty$.
		Then $\{u_n\}$ has a subsequence strongly convergent in $\W$ to a solution of \eqref{eq:P}. 
	\end{lemma}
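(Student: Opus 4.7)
The plan is to extract a weakly convergent subsequence, upgrade the weak convergence to strong via the $S_+$ property of the $p$-Laplacian, and then pass to the limit in the critical point equation.

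First, since $\{u_n\}$ is bounded in $\W$, after passing to a subsequence we may assume $u_n \rightharpoonup u$ in $\W$ for some $u \in \W$. By the Rellich--Kondrachov compact embeddings, $u_n \to u$ strongly in $L^p(\Omega)$ and in $L^q(\Omega)$, and also $u_n \to u$ a.e.\ in $\Omega$ (along a further subsequence). In particular, $\{|u_n|^{p-2}u_n\}$ is bounded in $L^{p'}(\Omega)$ with $|u_n|^{p-2}u_n \to |u|^{p-2}u$ a.e., which together with boundedness gives
\[
\intO |u_n|^{p-2} u_n (u_n-u)\,dx \to 0,
\qquad
\intO a |u_n|^{q-2} u_n (u_n-u)\,dx \to 0,
\]
using also that $a \in C(\overline{\Omega})$ is bounded.

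Next, I would test $\I_{\lambda_n}'(u_n)$ against $u_n - u$. Since $\|u_n-u\|_{\W}$ is bounded and $\|\I_{\lambda_n}'(u_n)\|_* \to 0$, we get $\langle \I_{\lambda_n}'(u_n), u_n-u\rangle \to 0$. Combined with the two convergences above (the $L^p$-term absorbs the bounded sequence $\{\lambda_n\}$ with no harm), this forces
\[
\intO |\nabla u_n|^{p-2} \nabla u_n \cdot \nabla(u_n-u)\,dx \to 0.
\]
Now the standard $S_+$ property of the negative Dirichlet $p$-Laplacian (i.e.\ weak convergence plus $\limsup \langle -\Delta_p u_n, u_n-u\rangle \le 0$ implies strong convergence) yields $u_n \to u$ strongly in $\W$.

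Finally, strong convergence in $\W$ gives $|\nabla u_n|^{p-2}\nabla u_n \to |\nabla u|^{p-2}\nabla u$ in $(L^{p'}(\Omega))^N$, and together with $\lambda_n \to \lambda$, $u_n \to u$ in $L^p$ and $L^q$, we can pass to the limit in $\langle \I_{\lambda_n}'(u_n), \varphi\rangle = o(1)\|\varphi\|_{\W}$ for each fixed $\varphi \in \W$ to obtain $\langle \I_\lambda'(u), \varphi\rangle = 0$; hence $u$ solves \eqref{eq:P}. The only mild technical point is the handling of the drift $\lambda_n \to \lambda$ in the $L^p$-term, but since $\{u_n\}$ is $L^p$-bounded and $u_n \to u$ in $L^p$, the contribution $(\lambda_n - \lambda)\intO |u_n|^{p-2}u_n(u_n-u)\,dx$ is $o(1)$ and poses no genuine obstacle; the heart of the proof is the $S_+$ step, which is classical for $-\Delta_p$.
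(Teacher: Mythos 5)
Your proof is correct and follows essentially the same route as the paper's: extract a weakly convergent subsequence, use the compact embedding and the vanishing of $\|I'_{\lambda_n}(u_n)\|_*$ tested against $u_n-u$ to show the $p$-Laplacian pairing tends to zero, invoke the $(S_+)$-property to upgrade to strong convergence in $\W$, and pass to the limit in the weak formulation. No gaps.
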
 
	\begin{proof} 
		Since $\{u_n\}$ is bounded, we may assume that it converges to some $u_0 \in \W$ weakly in $\W$ and strongly in $L^p(\Omega)$, up to a subsequence.
		We obtain from the convergence $\|I'_{\lambda_n}(u_n)\|_* \to 0$ that
		\begin{align*}
			\intO |\nabla u_n|^{p-2}&\nabla u_n\nabla (u_n-u_0)\,dx\\
			=
			&\lambda_n \intO |u_n|^{p-2} u_n (u_n-u_0)\,dx
			+\intO a |u_n|^{q-2} u_n (u_n-u_0)\,dx + o(1) \to 0 
		\end{align*}
		as $n\to +\infty$. 
		In view of the $(S_+)$-property of the $p$-Laplacian (see, e.g., \cite[Theorem 10]{dinica}), $u_n \to u_0$ strongly in $\W$. 
		As a consequence, we easily conclude that $u_0$ is a solution of \eqref{eq:P}. 
	\end{proof}
	
	\begin{lemma}\label{lem:PS} 
		Let $\lambda \neq \lambda_1(p)$. 
		Then 
		$ \wI$ satisfies the Palais--Smale condition. 
	\end{lemma}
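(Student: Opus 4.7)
The plan is to follow the usual two-step PS strategy: first show every Palais--Smale sequence is bounded (this is the delicate part), then deduce strong convergence by piggybacking on the argument of Lemma \ref{lem:conv-gs-PS}.

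For boundedness, let $\{u_n\} \subset \W$ satisfy $\wI(u_n) = O(1)$ and $\|\wI'(u_n)\|_* \to 0$. Combining these two relations to eliminate $\intO a (u_n)_+^q\,dx$ yields
\[
\widetilde{E}_\lambda(u_n)
=
\frac{p}{p-q}\bigl(\langle \wI'(u_n),u_n\rangle - q\,\wI(u_n)\bigr)
=
O(1) + o(\|u_n\|).
\]
Suppose for contradiction $\|u_n\| \to +\infty$, and set $v_n := u_n/\|u_n\|$, so $\|\nabla v_n\|_p = 1$. Dividing the displayed identity by $\|u_n\|^p$ and using $p>1$ gives $\widetilde{E}_\lambda(v_n) = \|\nabla v_n\|_p^p - \lambda\|(v_n)_+\|_p^p \to 0$. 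Passing to a subsequence, $v_n \rightharpoonup v_0$ in $\W$ and $v_n \to v_0$ in $L^p(\Omega)$, so $\|(v_n)_+\|_p \to \|(v_0)_+\|_p$. If $\lambda \leq 0$, this immediately forces $\|\nabla v_n\|_p \to 0$, contradicting $\|\nabla v_n\|_p = 1$. If $\lambda > 0$, the same relation gives $\|(v_0)_+\|_p^p = 1/\lambda > 0$, in particular $(v_0)_+ \not\equiv 0$.

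It remains to show $v_n$ actually converges strongly and that the limit $v_0$ leads to a contradiction with $\lambda \neq \lambda_1(p)$. Testing $\wI'(u_n)/\|u_n\|^{p-1}$ against $v_n - v_0$, the nonlinear term carries the factor $\|u_n\|^{q-p} \to 0$ and the remaining terms vanish in the limit by strong $L^p$-convergence, which yields
\[
\intO |\nabla v_n|^{p-2}\nabla v_n \nabla(v_n - v_0)\,dx \to 0.
\]
The $(S_+)$-property of $-\Delta_p$ (see \cite[Theorem 10]{dinica}) then gives $v_n \to v_0$ strongly in $\W$, so $v_0 \neq 0$, and passing to the limit in $\wI'(u_n)/\|u_n\|^{p-1}$ shows that $v_0$ weakly solves $-\Delta_p v_0 = \lambda (v_0)_+^{p-1}$ in $\Omega$ with zero boundary condition. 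Testing this equation with $(v_0)_-$ yields $\|\nabla (v_0)_-\|_p^p = 0$, hence $v_0 \geq 0$ in $\Omega$, and $v_0$ is a nonnegative eigenfunction of the Dirichlet $p$-Laplacian. By the characterization of the first eigenvalue (see, e.g., \cite{anane1987}), this forces $\lambda = \lambda_1(p)$, contradicting the hypothesis. Therefore $\{u_n\}$ is bounded.

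Once boundedness is established, the strong convergence of a subsequence follows verbatim from the argument of Lemma \ref{lem:conv-gs-PS}: pass to a weak limit $u_0$, test $\wI'(u_n)$ with $u_n - u_0$, note that the lower-order terms vanish by compactness of the embeddings $\W \hookrightarrow L^p, L^q$, and invoke the $(S_+)$-property to conclude $u_n \to u_0$ in $\W$. The main obstacle is the boundedness step in the range $\lambda > \lambda_1(p)$, where Poincaré-type inequalities give no information and the contradiction must be extracted from the limiting truncated eigenvalue equation for $v_0$, using the sign obtained by testing with $(v_0)_-$.
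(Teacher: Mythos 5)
Your proof is correct and follows essentially the same route as the paper: assume unboundedness, normalize, show $\widetilde{E}_\lambda(v_n)\to 0$, use the $(S_+)$-property to get strong convergence of $v_n$ to a nonzero nonnegative eigenfunction, and contradict $\lambda\neq\lambda_1(p)$ via the simplicity/sign characterization of $\lambda_1(p)$. The only cosmetic differences are that you extract $\widetilde{E}_\lambda(u_n)=O(1)+o(\|\nabla u_n\|_p)$ from the energy bound (the paper gets the same limit directly from $\langle \wI'(u_n),v_n\rangle$) and that you obtain $v_0\geq 0$ by testing the limiting equation with $(v_0)_-$ rather than testing $\wI'(u_n)$ with $(u_n)_-$; both are valid.
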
 
	\begin{proof}
		Let $\{u_n\}$ be any Palais--Smale sequence for $ \wI$, that is, $|\wI(u_n)|$ is bounded for all $n$ and $\| \wI'(u_n)\|_* \to 0$ as $n \to +\infty$. 
		Due to the $(S_+)$-property of the $p$-Laplacian (see, e.g., \cite[Theorem 10]{dinica}), $ \wI$ satisfies the Palais--Smale condition provided $\{u_n\}$ is bounded. 
		Suppose, contrary to our claim, that $\|\nabla u_n\|_p \to +\infty$, up to a subsequence, and consider the normalized functions $v_n:={u_n}/{\|\nabla u_n\|_p}$. 
		Let us show that $v_n \to \varphi_p$ strongly in $\W$.
		Since $\{v_n\}$ is bounded, it converges to some $v_0 \in \W$ weakly in $\W$ and strongly in $L^p(\Omega)$, up to a subsequence.
		We have
		\begin{equation}\label{eq:PS1}
			\left|
			\frac{1}{p}\langle \widetilde{E}_\lambda'(v_n), \xi\rangle
			-
			\frac{1}{\|\nabla u_n\|_p^{p-q}} \intO a (v_n)_+^{q-1} \xi \, dx
			\right|
			=
			\frac{|\langle  \wI'(u_n),\xi \rangle|}{\|\nabla u_n\|_p^{p-1}}
			\leq
			\frac{\|\nabla \xi\|_p\| \wI'(u_n)\|_*}{\|\nabla u_n\|_p^{p-1}}
		\end{equation}
		for any $\xi \in \W$.
		This yields $\langle \widetilde{E}_\lambda'(v_0), \xi\rangle = 0$ for any $\xi \in \W$. 
		Moreover, taking $\xi=v_n$ in \eqref{eq:PS1}, we get $\lim\limits_{n \to +\infty}\langle \widetilde{E}_\lambda'(v_n), v_n \rangle = 0$ and hence $v_0 \not\equiv 0$ in $\Omega$.
		That is, $v_0$ is an eigenfunction of the $p$-Laplacian corresponding to $\lambda$. 	
		Noting that
		$$
		\|\nabla (u_n)_-\|_p^p
		=
		|\langle  \wI^\prime (u_n),(u_n)_-\rangle|
		\leq
		\| \wI'(u_n)\|_*\|\nabla (u_n)_-\|_p,
		$$
		we conclude that $v_0\ge 0$ in $\Omega$. 
		Therefore, $\lambda=\lambda_1(p)$ and $v_0=\varphi_p$, since $\varphi_p$ is the only eigenfunction of the $p$-Laplacian with constant sign in $\Omega$ (see, e.g., \cite{anane1987}).
		This contradicts our assumption $\lambda \neq \lambda_1(p)$.
	\end{proof}

	\subsection{Behavior of sequences} 
	In this subsection, we collect several results on the behavior of functional sequences, which will be used in the proofs of our main theorems.
	
	\begin{lemma}\label{lem:bdd-PS}
		Let $\{\lambda_n\}\subset \mathbb{R}$ 
		and $\{u_n\}\subset \W$ be 
		sequences such that $u_n\ge 0$ a.e.\ in $\Omega$ for all $n \in \mathbb{N}$ and 
		\begin{equation*}\label{eq:bdd-PS-0}  
			\lambda_n\to\lambda \in \mathbb{R}, 
			\quad \|\nabla u_n\|_p \to +\infty,
			\quad 
			\|\In^\prime(u_n)\|_{*} \to 0 
			\quad \text{as}~ n \to +\infty.
		\end{equation*}
		Then $\lambda=\lambda_1(p)$ and 
		the sequence $\{v_n\}$, 
		where $v_n := u_n/\|\nabla u_n\|_p$, has a subsequence strongly convergent in $\W$ 
		to $\varphi_p$. 
	\end{lemma}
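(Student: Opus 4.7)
The proof would follow very closely the pattern already used in Lemma \ref{lem:PS}, so I would set $v_n := u_n/\|\nabla u_n\|_p$ right from the start (no contradiction argument is needed here since the blow-up is part of the hypothesis). Then $\|\nabla v_n\|_p = 1$, $v_n \geq 0$ a.e., so along a subsequence $v_n \rightharpoonup v_0$ weakly in $\W$, strongly in $L^p(\Omega)$, and pointwise a.e., with $v_0 \geq 0$ a.e.\ in $\Omega$.

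Next I would test the derivative relation. For any $\xi \in \W$, since $u_n \geq 0$,
\begin{equation*}
	\frac{\langle \In'(u_n),\xi\rangle}{\|\nabla u_n\|_p^{p-1}}
	= \intO |\nabla v_n|^{p-2}\nabla v_n\nabla\xi\,dx
	- \lambda_n\intO v_n^{p-1}\xi\,dx
	- \frac{1}{\|\nabla u_n\|_p^{p-q}}\intO a\,v_n^{q-1}\xi\,dx,
\end{equation*}
and the left-hand side is bounded in absolute value by $\|\nabla \xi\|_p\,\|\In'(u_n)\|_*/\|\nabla u_n\|_p^{p-1} \to 0$. Because $q < p$, the last term on the right vanishes as $n\to +\infty$ (the integral is bounded by boundedness of $\{v_n\}$ in $L^q$, while $\|\nabla u_n\|_p^{p-q}\to+\infty$). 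The central observation is then to take $\xi = v_n - v_0$: the three right-hand terms become, respectively, $\intO |\nabla v_n|^{p-2}\nabla v_n\nabla(v_n - v_0)\,dx$ plus two quantities that tend to zero (the $\lambda_n$-term by the strong $L^p$ convergence $v_n\to v_0$, the $a$-term as above). Consequently $\intO |\nabla v_n|^{p-2}\nabla v_n\nabla(v_n - v_0)\,dx \to 0$, and the $(S_+)$-property of the $p$-Laplacian (see \cite[Theorem 10]{dinica}, already invoked in the excerpt) upgrades weak to strong convergence $v_n \to v_0$ in $\W$.

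Strong convergence yields $\|\nabla v_0\|_p = 1$, in particular $v_0 \not\equiv 0$. Passing to the limit in the normalized identity above for arbitrary $\xi$, I obtain
\begin{equation*}
	\intO |\nabla v_0|^{p-2}\nabla v_0 \nabla\xi\,dx = \lambda \intO v_0^{p-1}\xi\,dx,
\end{equation*}
so $v_0$ is a nonzero nonnegative eigenfunction of the Dirichlet $p$-Laplacian associated with the eigenvalue $\lambda$. Since the only sign-constant eigenfunction is $\varphi_p$ (corresponding to $\lambda_1(p)$; see \cite{anane1987}), and the normalization $\|\nabla v_0\|_p = 1 = \|\nabla\varphi_p\|_p$ fixes the multiplicative constant, I conclude $\lambda = \lambda_1(p)$ and $v_0 = \varphi_p$.

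I do not foresee a genuine obstacle: the only care point is that the subhomogeneous nonlinearity $a|u|^{q-2}u$ becomes negligible after normalization precisely because $q<p$, which is exactly the mechanism that forces $v_0$ to be an eigenfunction rather than a solution of the full equation; everything else is a recycling of the $(S_+)$ argument from Lemmas \ref{lem:conv-gs-PS} and \ref{lem:PS}.
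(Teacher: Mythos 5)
Your proposal is correct and follows essentially the same route as the paper's proof: normalize, test the derivative with $(v_n-v_0)/\|\nabla u_n\|_p^{p-1}$ (noting the $a$-term and $\lambda_n$-term vanish since $q<p$ and $v_n\to v_0$ in $L^p$), apply the $(S_+)$-property to get strong convergence, and then identify the limit as a nonnegative normalized eigenfunction, forcing $\lambda=\lambda_1(p)$ and $v_0=\varphi_p$ by the sign and simplicity properties of the first eigenvalue.
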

	\begin{proof} 
		Since $\{v_n\}$ is bounded in $\W$, we may suppose that $\{v_n\}$ converges to some $v_0 \in \W$ weakly in $\W$ and strongly 
		in $L^p(\Omega)$ and $L^q(\Omega)$, up to a subsequence.
		Consequently, we get
		\begin{align*}
			o(1) &= 
			\bigg< \In^\prime(u_n),\frac{v_n-v_0}{\|\nabla u_n\|_p^{p-1}} 
			\bigg> \\
			&=\intO |\nabla v_n|^{p-2}\nabla v_n\nabla (v_n-v_0)\,dx 
			\\
			&~\quad -\lambda_n\intO |v_n|^{p-2}v_n(v_n-v_0)\,dx 
			-\frac{1}{\|\nabla u_n\|_p^{p-q}} \intO a |v_n|^{q-2}v_n(v_n-v_0)\,dx 
			\\
			&=\intO |\nabla v_n|^{p-1}\nabla v_n\nabla (v_n-v_0)\,dx +o(1). 
		\end{align*}
		Hence, the $(S_+)$-property of the $p$-Laplacian implies that $v_n\to v_0$ strongly in $\W$ (see, e.g., \cite[Theorem 10]{dinica}), 
		whence $\|\nabla v_0\|_p=1$ and $v_0\not\equiv 0$ a.e.\ in $\Omega$. 
		
		Considering now $\langle I'_{\lambda_n}(u_n),\xi/\|\nabla u_n\|_p^{p-1}\rangle$ for any $\xi\in \W$, we have 
		\begin{equation*}
			\intO |\nabla v_n|^{p-2}\nabla v_n\nabla \xi\,dx 
			-\lambda_n \intO |v_n|^{p-2}v_n\xi\,dx 
			-\frac{1}{\|\nabla u_n\|_p^{p-q}}\intO a |v_n|^{q-2}v_n\xi\,dx
			= o(1).
		\end{equation*}
		Letting $n\to+\infty$ and recalling that $\|\nabla v_0\|_p=1$ and 
		$v_n\ge 0$ a.e.\ in $\Omega$ for all $n$, we deduce that 
		$v_0$ is a nonnegative eigenfunction of the $p$-Laplacian.
		This yields $\lambda=\lambda_1(p)$, since any higher eigenfunction must be sign-changing (see, e.g., \cite{anane1987}). 
		Finally, the simplicity of $\lambda_1(p)$ ensures that
		$v_0=\varphi_p$. 
	\end{proof}

	\begin{lemma}\label{lem:conv-gs}
		Let $\{\lambda_n\} \subset \mathbb{R}$ 
		be such that $M(\lambda_n)$ is attained, and $\lambda_n \to \lambda \in \mathbb{R}$ as $n \to +\infty$.
		Let $u_n$ be a minimizer of $M(\lambda_n)$, and let $\{u_n\}$ be bounded in $\W$. 
		If $\liminf\limits_{n\to+\infty} \In(u_n)<0$, then $\{u_n\}$ has a subsequence strongly convergent in $\W$ to a minimizer of $M(\lambda)$.
	\end{lemma}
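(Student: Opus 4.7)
\begin{proof*}{Lemma \ref{lem:conv-gs} (plan)}
The plan is to first produce a strongly convergent subsequence via the standard $(S_+)$-machinery, then certify that its limit is nonzero using the assumption on the energies, and finally compare against an arbitrary element of $\N$ via the fibering map to conclude that the limit is itself a minimizer.

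First, since each $u_n$ is assumed to realize $M(\lambda_n)$, Proposition~\ref{prop:charact} says $u_n$ is a ground state of $(P_{\lambda_n})$, so $\In^\prime(u_n)=0$. In particular $\|\In^\prime(u_n)\|_{*}\to 0$, so the hypotheses of Lemma~\ref{lem:conv-gs-PS} hold and, after passing to a subsequence, $u_n\to u_0$ strongly in $\W$, with $u_0$ a solution of \eqref{eq:P}. Choosing the subsequence so that $\In(u_n)$ actually tends to its $\liminf$, strong convergence and $\lambda_n\to\lambda$ yield $\In(u_n)\to \I(u_0)<0$. Hence $u_0\not\equiv 0$, so $u_0\in \N$ and $\I(u_0)\ge M(\lambda)$.

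It remains to show the reverse inequality $\I(u_0)\le M(\lambda)$. By Remark~\ref{rem:m<0} it suffices to test against an arbitrary $w\in \N\cap \mathcal{A}^+$; such $w$ satisfies $\intO a|w|^q\,dx>0$ and $E_\lambda(w)=\intO a|w|^q\,dx>0$. Since $\lambda_n\to \lambda$, for $n$ large we still have $E_{\lambda_n}(w)>0$, and the fibering formula \eqref{tu} produces $t_n:=t_{\lambda_n}(w)>0$ with $t_n w\in \mathcal{N}_{\lambda_n}\cap\mathcal{A}^+$ and $t_n\to t_\lambda(w)=1$. Consequently $\In(t_n w)\to \I(w)$, and since $u_n$ minimizes $M(\lambda_n)$,
\begin{equation*}
\I(u_0)=\lim_{n\to+\infty}\In(u_n)\le \lim_{n\to+\infty}\In(t_n w)=\I(w).
\end{equation*}
Taking the infimum over $w\in \N\cap\mathcal{A}^+$ gives $\I(u_0)\le M(\lambda)$, so $u_0$ is a minimizer.

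The only delicate point I expect is the continuity step $t_n\to 1$ and $\In(t_n w)\to \I(w)$: it relies on the strict positivity of $E_\lambda(w)$ (so that $E_{\lambda_n}(w)$ stays bounded away from $0$ and the explicit formula \eqref{tu} passes to the limit). Everything else is a routine combination of the $(S_+)$-property, weak-to-strong convergence, and the fact that ground states are critical points, all of which are already recorded earlier in the paper.
\end{proof*}
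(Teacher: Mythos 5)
Your proposal is correct and follows essentially the same route as the paper: extract a strongly convergent subsequence via Lemma \ref{lem:conv-gs-PS} (the minimizers being critical points), use $\liminf_{n}\In(u_n)<0$ to rule out $u_0\equiv 0$, and compare with an arbitrary $w\in\N\cap\mathcal{A}^+$ through $t_{\lambda_n}(w)w\in\mathcal{N}_{\lambda_n}\cap\mathcal{A}^+$ together with Remark \ref{rem:m<0}. The only cosmetic difference is that the paper bypasses the limit $t_{\lambda_n}(w)\to 1$ by using $\In(t_{\lambda_n}(w)w)=\min_{t>0}\In(tw)\le\In(w)$, whereas you pass to the limit in $t_{\lambda_n}(w)$ directly; both are valid.
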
 
	\begin{proof} 
		Applying Lemma \ref{lem:conv-gs-PS}, we see that $\{u_n\}$ converges to a solution $u_0$ of \eqref{eq:P} strongly in $\W$, up to a subsequence. 
		The assumption $\liminf\limits_{n\to+\infty} \In(u_n)<0$ guarantees that $u_0 \not\equiv 0$ in $\Omega$, and hence $u_0 \in \N$.
		Let us prove that $u_0$ is a minimizer of $M(\lambda)$. 
		Fix any $w\in \N\cap \mathcal{A}^+$.
		By the continuity, the strict inequality $E_{\lambda_n}(w)>0$ holds for all sufficiently large $n$, and we obtain 
		$$
		\In (u_n)=M(\lambda_n)\le  \In(t_{\lambda_n}(w) w)
		=\min_{t > 0} \In(t w) \le  \In(w). 
		$$
		Letting $n\to+\infty$, we arrive at $ \I(u_0)\le  \I(w)$, and therefore 
		$u_0$ is a minimizer of $M(\lambda)$, see Remark~\ref{rem:m<0}.
	\end{proof}

	The following auxiliary result will be essential to prove Theorem \ref{NoGroundStates} \ref{NoGroundStates:2} and Lemma~\ref{lem:1}.
	
	\begin{lemma}\label{lem:convergence-to-eigenf}
		Let $p \geq 2q$.
		Assume $\partial \Omega$ to be connected when $N \geq 2$.
		Let $\lambda = \lambda_1(p)$ 
		and $\int_\Omega a \varphi_p^q \,dx = 0$. 
		Let $\{w_n\} \subset \W$ be such that $E_\lambda(w_n)>0$, $\intO a|w_n|^q \,dx >0$ for any $n \in \mathbb{N}$, and $w_n \to \varphi_p$ strongly in $\W$. 
		Then the following assertions hold:
		\begin{enumerate}[label={\rm(\roman*)}]	
			\item\label{lem:convergence-to-eigenf:1}
			If $p=2q$, then
			$$
			-\infty 
			<
			\liminf\limits_{n \to +\infty}  \I(t_\lambda(w_n) w_n)
			\leq 
			\limsup\limits_{n \to +\infty}  \I(t_\lambda(w_n) w_n) 
			\leq 0.
			$$ 
			\item\label{lem:convergence-to-eigenf:2}
			If $p>2q$, then $\lim\limits_{n \to +\infty}  \I(t_\lambda(w_n) w_n) = 0$.
		\end{enumerate}
	\end{lemma}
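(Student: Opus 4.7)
The approach is to apply \eqref{eq:Poh} along the sequence $\{w_n\}$, yielding
$$\I(t_\lambda(w_n) w_n) = -\frac{p-q}{pq}\,\frac{a_n^{p/(p-q)}}{e_n^{q/(p-q)}},$$
where $a_n := \intO a |w_n|^q\,dx > 0$ and $e_n := \E(w_n) > 0$ by assumption. Since $w_n \to \varphi_p$ strongly in $\W$ while $\intO a \varphi_p^q\,dx = 0$ and $\E(\varphi_p) = 0$, both $a_n, e_n \to 0^+$. The upper bound $\limsup_n \I(t_\lambda(w_n) w_n) \leq 0$ is then immediate from the sign. What remains in both cases is the lower bound, which amounts to showing that $a_n^p/e_n^q$ stays bounded (case \ref{lem:convergence-to-eigenf:1}) or tends to zero (case \ref{lem:convergence-to-eigenf:2}).

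To quantify the decay rates, I would write $w_n = s_n \varphi_p + g_n$ with $s_n \to 1$ and $g_n$ transversal to the $\varphi_p$-direction (for instance, satisfying $\intO \varphi_p^{p-1} g_n\,dx = 0$), so that $\|\nabla g_n\|_p \to 0$. For the numerator, expanding $|s_n \varphi_p + g_n|^q$ around $s_n \varphi_p$ with the elementary bound $\big||A+B|^q - A^q\big| \leq C(A^{q-1}|B| + |B|^q)$ for $A \geq 0$, together with the hypothesis $\intO a \varphi_p^q\,dx = 0$ that annihilates the $s_n \varphi_p$-contribution identically, one obtains $|a_n| \leq C \|\nabla g_n\|_p$. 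Thus only the transversal component $g_n$ contributes to first order, which is the whole reason the lemma has a chance to succeed.

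For the denominator, the technical crux is a quantitative coercivity estimate of the form $e_n \geq c \|\nabla g_n\|_p^2$ for small transversal $g_n$, capturing the nondegeneracy of $\varphi_p$ as a minimizer of the Rayleigh quotient in the directions orthogonal to $\varphi_p$. For $p \geq 2$ such an estimate can be derived from the standard convexity inequality for $|\cdot|^p$ applied to $|\nabla(s_n\varphi_p + g_n)|^p$, combined with the simplicity of $\lambda_1(p)$ and a spectral gap argument for the linearized problem; the connectedness of $\partial\Omega$ enters precisely here, underpinning the strong maximum principle and unique-continuation arguments needed to keep this second-order term strictly positive in the transversal directions. Combining the two estimates yields $a_n^p/e_n^q \leq C \|\nabla g_n\|_p^{p - 2q}$, which is uniformly bounded when $p = 2q$ and tends to $0$ when $p > 2q$, delivering both assertions. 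I expect the derivation of the quadratic coercivity estimate for $p > 2$ to be the principal obstacle, as the degeneracy of the $p$-Laplacian at critical points of $\varphi_p$ can make the effective second derivative vanish on a non-negligible set and forces one to exploit the transversality of $g_n$ rather carefully.
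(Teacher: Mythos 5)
Your overall strategy (fibered formula \eqref{eq:Poh}, decomposition along $\varphi_p$ plus a transversal part, matching upper bound for the numerator with a lower bound for $\E(w_n)$) is the same as the paper's, and your numerator estimate and the trivial bound $\limsup_n \I(t_\lambda(w_n)w_n)\le 0$ are fine. The gap is the denominator: the quadratic coercivity $\E(w_n)\ge c\,\|\nabla g_n\|_p^2$ that your exponent count relies on is not established by your sketch and is not to be expected for $p>2$ (and here $p\ge 2q>2$ always). The second variation of $u\mapsto \|\nabla u\|_p^p-\lambda_1(p)\|u\|_p^p$ at $\varphi_p$ is the \emph{degenerate} weighted form built on $|\nabla \varphi_p|^{p-2}$, which vanishes on the critical set of $\varphi_p$; it does not dominate $\|\nabla g\|_2^2$, let alone $\|\nabla g\|_p^2$, so ``convexity of $|\cdot|^p$ plus a spectral gap for the linearization'' cannot produce your estimate. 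Also, the connectedness of $\partial\Omega$ has nothing to do with maximum principles or unique continuation here: it is a hypothesis of the improved Poincar\'e inequality of Fleckinger-Pell\'e and Tak\'a\v{c} (for $p>2$), which is the tool the paper actually uses and which only yields $\E(w_n)\ge C\bigl(|\gamma_n|^{p-2}\intO |v_n|^2\,dx+\intO |v_n|^p\,dx\bigr)$ for the $L^2$-orthogonal decomposition $w_n=\gamma_n\varphi_p+v_n$.

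Once only this weaker lower bound is available, your numerator bound $|a_n|\le C\|\nabla g_n\|_p$ is mismatched: $\|v_n\|_2$ may be much smaller than $\|\nabla v_n\|_p$, so the ratio $a_n^{p/(p-q)}/e_n^{q/(p-q)}$ is not controlled. The repair, which is exactly the paper's argument, is to estimate the numerator by the \emph{same} quantity that appears in the improved Poincar\'e bound: by the mean value theorem and $\intO a\varphi_p^q\,dx=0$, one gets $0<\intO a|w_n|^q\,dx\le q\intO |a|\,|\gamma_n\varphi_p+\varepsilon_n v_n|^{q-1}|v_n|\,dx$, and then H\"older with exponents $(2,2)$, using $2(q-1)<p$ (this is where $p\ge 2q$ enters), gives $\intO a|w_n|^q\,dx\le C\bigl(\intO |v_n|^2\,dx\bigr)^{1/2}\le C\bigl(\intO |v_n|^2\,dx+\intO |v_n|^p\,dx\bigr)^{1/2}$. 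Combining the two bounds yields $\J(w_n)\ge -C\bigl(\intO |v_n|^2\,dx+\intO |v_n|^p\,dx\bigr)^{\frac{p-2q}{2(p-q)}}$, which gives boundedness for $p=2q$ and convergence to $0$ for $p>2q$. So the skeleton of your proof is right, but the key analytic ingredient must be the improved Poincar\'e inequality paired with an $L^2$-based numerator estimate, not a gradient-norm coercivity estimate.
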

	\begin{proof}
		Throughout the proof, we always denote by $C$ a positive constant independent of $n \in \mathbb{N}$. 
		We decompose $w_n$ as $w_n = \gamma_n \varphi_p + v_n$, where $\gamma_n \in \mathbb{R}$ and $v_n \in \W$ are chosen in such a way that $\gamma_n = \|\varphi_p\|_2^{-2} \intO w_n \varphi_p \, dx$ and $\intO v_n \varphi_p \, dx = 0$ for all $n$. 
		Notice that $v_n\not=0$ for all $n$ because $\E(w_n)>0=\E(\varphi_p)$. 
		Since $w_n \to \varphi_p$ strongly in $\W$, it is not hard to deduce that 
		\begin{equation}\label{eq:converge1}
			\gamma_n \to 1
			\quad \text{and} \quad
			\|\nabla v_n\|_p \to 0
			\quad \text{as} \quad
			n \to +\infty.
		\end{equation}
		Recalling now that $\partial \Omega$ is connected, we can use the improved Poincar\'e inequality from \cite{takac} (see \cite[Corollary 1.2]{takac}) to provide the lower bound
		\begin{align}
			\label{eq:estimate_for_H}
			E_\lambda(w_n) 
			\geq C\left(|\gamma_n|^{p-2} \intO |v_n|^2 \, dx + \intO |v_n|^p \, dx \right) 
			\geq \dfrac{C}{2}
			\left(
			\intO |v_n|^2 \, dx + \intO |v_n|^p \, dx \right) 
			>0
		\end{align} 
		for all sufficiently large $n$. 
		Let us now estimate $\int_\Omega a |w_n|^q \,dx$ from above. 
		Recalling that $\intO a \varphi_p^q \,dx =0$ and applying the mean value theorem, for each $n$ we can find $\varepsilon_n \in (0,1)$  such that
		\begin{align*}
			\notag
			0 < \int_\Omega a |w_n|^q \,dx
			&= 
			|\gamma_n|^{q} \int_\Omega a \varphi_p^q \,dx 
			+ 
			q \int_\Omega a |\gamma_n \varphi_p+\varepsilon_n v_n|^{q-2}(\gamma_n \varphi_p+\varepsilon_n v_n) v_n \,dx
			\\
			&\leq 
			q \int_\Omega |a| |\gamma_n \varphi_p+\varepsilon_n v_n|^{q-1} |v_n| \,dx.
		\end{align*}
		Since $p \geq 2q > 2(q-1)$ by our assumption, we use the H\"older inequality and the convergences \eqref{eq:converge1} to obtain the following upper bound for all $n$:
		\begin{align}
			\notag
			q\intO |a||\gamma_n \varphi_p + \varepsilon_n v_n|^{q-1}|v_n| \, dx 
			&\leq C\left(\intO |\gamma_n \varphi_p + \varepsilon_n v_n|^{2(q-1)} \, dx\right)^\frac{1}{2} \left(\intO |v_n|^2 \, dx\right)^\frac{1}{2}
			\\
			\label{eq:estimate_for_G}
			&\leq 
			C \left(\intO |v_n|^2 \, dx\right)^\frac{1}{2} 
			\leq 
			C \left(
			\intO |v_n|^2 \, dx
			+
			\intO |v_n|^p \, dx
			\right)^\frac{1}{2}.
		\end{align}
		Combining now \eqref{eq:estimate_for_H} and \eqref{eq:estimate_for_G}, and recalling that $p \geq 2q$, we deduce that
		\begin{align*}
			0 
			&\geq 
			\limsup_{n \to +\infty}  \I(t_\lambda(w_n) w_n)
			\geq
			\liminf_{n \to +\infty}  \I(t_\lambda(w_n) w_n)
			\\
			&=
			\liminf_{n \to +\infty} \J(w_n)
			\geq
			- C \, 
			\limsup_{n \to +\infty} 
			\left(
			\intO |v_n|^2 \, dx
			+
			\intO |v_n|^p \, dx \right)^\frac{p-2q}{2(p-q)} > -\infty.
		\end{align*}
		Moreover, if $p>2q$, then we see that $\lim\limits_{n \to +\infty}  \I(t_\lambda(w_n) w_n) = 0$. 
	\end{proof}

	\section{The least energy at \texorpdfstring{$\lambda^*$}{lambda-*}}\label{sec:proof:NoGroundS}
	
	In this section, we prove Theorem \ref{NoGroundStates} and provide several auxiliary results on the properties of the critical set of $M(\lambda^*)$ which will be crucial for the proof of Theorem \ref{thm:1} given in Section~\ref{sec:thm1} below.

	\subsection{Proof of Theorem \ref{NoGroundStates}}\label{sec:proof:NoGroundS:1}
	
	\begin{proof*}{the assertion \ref{NoGroundStates:1}} 
		As was mentioned in Remark \ref{rem:NoGS}, the proof can be found in \cite{QS}. 
		We provide alternative arguments for the sake of completeness and clarity.
		Let us choose an increasing sequence $\{\lambda_n\} \subset \mathbb{R}$ 
		convergent to $\lambda^*$, and for each $\lambda_n$ 
		we denote by $u_n$ a nonnegative ground state of {\renewcommand{\lambdaM}{\lambda_n}\eqref{eq:P}}, see Theorem \ref{thm:GS} for the existence. 
		Lemma~\ref{lem:bdd-PS} ensures the boundedness of 
		$\{u_n\}$ in $\W$ because 
		$\lambda^*>\lambda_1(p)$ by Proposition \ref{prop:+}. 
		Let us prove that $\liminf\limits_{n\to+\infty} \In(u_n)<0$.
		If this claim is established, then Lemma \ref{lem:conv-gs} implies the assertion \ref{NoGroundStates:1}. 
		Fix any $w\in\mathcal{N}_{\lambda^*}\cap \mathcal{A}^+$. 
		Since $E_{\lambda_n}(w)>0$ for any sufficiently large $n$, 
		we get $t_{\lambda_n}(w)w\in \mathcal{N}_{\lambda_n}\cap \mathcal{A}^+$, and hence
		\begin{equation}\label{eq:NGS-1} 
			\In(u_n)=M(\lambda_n)\le 
			\In(t_{\lambda_n}(w)w)
			=
			\min_{t > 0} \In(tw)
			\leq
			\In(w).
		\end{equation} 
		Recalling that $w\in\mathcal{N}_{\lambda^*}\cap \mathcal{A}^+$ and passing to the limit in \eqref{eq:NGS-1}, we arrive at 
		$$
		\limsup_{n\to+\infty}\In(u_n)=\limsup_{n\to+\infty}M(\lambda_n)\le 
		I_{\lambda^*}(w)<0.
		$$ 
		Applying Lemma \ref{lem:conv-gs}, we finish the proof.
	\end{proof*}

	\begin{proof*}{the assertion \ref{NoGroundStates:2}} 
		Assume first that $p<2q$. 
		We have $\lambda^*=\lambda_1(p)$ by Proposition~\ref{prop:+}, and so $E_{\lambda^*}(\varphi_p)=0$ and
		\begin{equation}\label{eq:noground1}
			\langle E_{\lambda^*}'(\varphi_p), \theta\rangle=0
			\quad\text{for any}~~ \theta \in C_0^{\infty}(\Omega).
		\end{equation}
		Since $\varphi_p>0$ in $\Omega$, we can find $\theta \in C_0^{\infty}(\Omega)$ satisfying $\int_\Omega a \varphi_p^{q-1} \theta \,dx	> 0$.
		By the continuity, there exist $\varepsilon_0>0$ and $C>0$ such that 
		\begin{equation}\label{g<0-2}
			q \int_\Omega a |\varphi_p + \varepsilon \theta|^{q-2} (\varphi_p+\varepsilon \theta) \theta \,dx \geq C > 0
			\quad \text{for all}~
			\varepsilon \in [-\varepsilon_0,\varepsilon_0]. 
		\end{equation}	
		Notice that $\theta\not\in\mathbb{R}\varphi_p$ in view of the assumption $\int_\Omega a \varphi_p^q \,dx = 0$. 
		Therefore, the simplicity of $\lambda_1(p)$ gives $E_{\lambda^*}(\varphi_p+\varepsilon\theta) > 0$ for any $\varepsilon \neq 0$.
		
		Fix any $\varepsilon\in(0,\varepsilon_0]$ and denote  $u_\varepsilon := \varphi_p + \varepsilon \theta$. According to the mean value theorem, there exist $\varepsilon_1\in (0,\varepsilon)$ and $\varepsilon_2\in(0,\varepsilon)$ such that
		\begin{align} 
			\label{eq:prop3.7:1}
			0 < E_{\lambda^*}(u_\varepsilon) &= 
			E_{\lambda^*}(\varphi_p) + 
			\varepsilon\langle E_{\lambda^*}'(\varphi_p+\varepsilon_1\theta), \theta \rangle 
			=\varepsilon \langle E_{\lambda^*}'(\varphi_p+\varepsilon_1\theta), \theta \rangle
		\end{align}
		and, by \eqref{g<0-2}, 
		\begin{equation}
			\label{eq:prop3.7:2}
			\int_\Omega a |u_\varepsilon|^q \,dx = 
			\int_\Omega a \varphi_p^q \,dx 
			+ 
			\varepsilon 
			q \int_\Omega a |\varphi_p + \varepsilon_2 \theta|^{q-2} (\varphi_p+\varepsilon_2 \theta) \theta \,dx
			\geq 
			\varepsilon C > 0. 
		\end{equation}
		Consequently, we have $t_{\lambda^*}(u_\varepsilon)u_\varepsilon\in \mathcal{N}_{\lambda^*} \cap \mathcal{A}^+$. 
		Our aim now is to study the behavior of $I_{\lambda^*}(t_{\lambda^*}(u_\varepsilon)u_\varepsilon)$ as $\varepsilon \to 0$. 
		To this end, we estimate the right-hand side of \eqref{eq:prop3.7:1} from above.
		It is known (see, e.g., \cite[Chapter 12]{lind}) that there exists $C_1>0$ such that 
		\begin{equation}\label{eq:inequality}
			0 \le \langle |x|^{p-2}x-|y|^{p-2}y, x-y\rangle_{\mathbb{R}^N} 
			\le 
			\left\{
			\begin{aligned}
				&C_1 |x-y|^p 					&&{\rm if}\ 1<p\le 2,\\ 
				&C_1 |x-y|^2(|x|+|y|)^{p-2}	&&{\rm if}\ p\ge 2,
			\end{aligned}
			\right.
		\end{equation}
		for all $x,y\in\mathbb{R}^N$.
		Recalling $0<\varepsilon_1<\varepsilon\le \varepsilon_0$ and applying \eqref{eq:noground1} and \eqref{eq:inequality}, 
		we obtain 
		\begin{align*} 
			\langle 
			E_{\lambda^*}'(\varphi_p+\varepsilon_1\theta), \theta 
			\rangle
			&= 
			\langle 
			E_{\lambda^*}'(\varphi_p+\varepsilon_1\theta), \theta 
			\rangle 
			-\langle 
			E_{\lambda^*}'(\varphi_p), \theta 
			\rangle \\
			&=
			\frac{1}{\varepsilon_1}
			\langle E_{\lambda^*}'(\varphi_p+\varepsilon_1\theta)-E_{\lambda^*}^\prime(\varphi_p), 
			(\varphi_p+\varepsilon_1\theta) - \varphi_p 
			\rangle 
			\\
			&\leq 
			\left\{
			\begin{aligned}
				&\frac{C_1 \varepsilon_1^p}{\varepsilon_1}\|\nabla \theta\|_p^p
				\leq C_2 \varepsilon_1^{p-1} &&\text{if}~ 1<p\le 2, 
				\\ 
				&\frac{C_1 \varepsilon_1^2}{\varepsilon_1}
				\intO |\nabla\theta|^2
				(2 |\nabla \varphi_p|+\varepsilon_1|\nabla \theta|)^{p-2}  \, dx
				\le C_2 \varepsilon_1 
				&&\text{if}~ p\ge 2,
			\end{aligned}
			\right.
		\end{align*} 
		where $C_2>0$ does not depend on $\varepsilon_1$.
		Thus, we conclude from \eqref{eq:prop3.7:1} that 
		\begin{equation}\label{eq:upperE}
		E_{\lambda^*}(u_\varepsilon)\leq 
		\left\{
		\begin{aligned}
		&C_2 \varepsilon^{p} &&\text{if}~ 1<p\le 2,\\
		&C_2 \varepsilon^{2} &&\text{if}~ p\ge 2. 
		\end{aligned}
		\right.
		\end{equation}
		Recalling that $t_{\lambda^*}(u_\varepsilon)u_\varepsilon\in \N\cap \mathcal{A}^+$ and using the estimates \eqref{eq:prop3.7:2} and \eqref{eq:upperE}, we arrive at
		\begin{align*} 
			M(\lambda^*) 
			&\leq I_{\lambda^*}(t_{\lambda^*}(u_\varepsilon)u_\varepsilon)\\
			&=J_{\lambda^*}(u_\varepsilon)
			=- \frac{p-q}{pq}\,
			\frac{(\int_\Omega a |u_\varepsilon|^q \,dx)^{\frac{p}{p-q}}}{\left(E_{\lambda^*}(u_\varepsilon)\right)^{\frac{q}{p-q}}} \le 
			\left\{
			\begin{aligned}
				&-C_3 \varepsilon^{\frac{p}{p-q} - \frac{pq}{p-q}}
				&&{\rm if}\ 1<p\le 2, 
				\\[3mm]
				&-C_3 \varepsilon^{\frac{p}{p-q} - \frac{2q}{p-q}}
				&&{\rm if}\ p\ge 2, 
			\end{aligned}
			\right.
		\end{align*}
		where $C_3>0$ is independent of $\varepsilon$. 
		Since $p<2q$, we get $M({\lambda^*}) = -\infty$ by letting $\varepsilon \to 0$.
		
		Assume now that $p \geq 2q$. 
		We start by showing that $M(\lambda^*)>-\infty$.
		Suppose, by contradiction, that there exists a sequence $\{u_n\} \subset \mathcal{N}_{\lambda^*}\cap \mathcal{A}^+$ satisfying $J_{\lambda^*}(u_n) \to -\infty$ as $n \to +\infty$. 
		For each $n$, we define the normalized function $w_n:=u_n/\|\nabla u_n\|_p$. 
		Since the fibered functional $J_{\lambda^*}$ is $0$-homogeneous, we have $J_{\lambda^*}(w_n) \to -\infty$.
		The boundedness of $\{w_n\}$ in $\W$ implies its weak convergence in $\W$ and strong convergence in $L^p(\Omega)$ to some $w_0$, up to a subsequence.
		Hence, recalling that ${\lambda^*}=\lambda_1(p)$, we get $0\le E_{\lambda^*}(w_0)\le \liminf\limits_{n\to+\infty} E_{\lambda^*}(w_n)$.
		On the other hand, the convergence  $J_{\lambda^*}(w_n) \to -\infty$ and the boundedness of $\{w_n\}$ imply that  $\liminf\limits_{n\to+\infty} E_{\lambda^*}(w_n)=0$, see \eqref{eq:Poh}. 
		Consequently, $E_{\lambda^*}(w_0)=0$, and hence $w_n \to w_0 = \varphi_p$ strongly in $\W$.
		Applying Lemma \ref{lem:convergence-to-eigenf}, we deduce that 
		$\{J_{\lambda^*}(w_n)\}$ has to be bounded from below, which gives a contradiction. 
		In conclusion, in view of Remark \ref{rem:m<0}, we have $M(\lambda^*) \in (-\infty, 0)$.
		
		Finally, we prove that $M(\lambda^*)$ is attained provided $p>2q$. 
		Let $\{u_n\}\subset \mathcal{N}_{\lambda^*}\cap \mathcal{A}^+$ 
		be a minimizing sequence for $M(\lambda^*)$. 
		Let us show the boundedness of $\{u_n\}$. 
		Suppose, contrary to our claim, that $\lim\limits_{n\to+\infty}\|\nabla u_n\|_p=+\infty$ along a subsequence.
		Denoting, as above, $w_n:=u_n/\|\nabla u_n\|_p$, we see that 
		$w_n \to w_0 \in \W$ weakly in $\W$ and strongly in $L^p(\Omega)$, up to a subsequence. 
		Since $\{u_n\}\subset \mathcal{N}_{\lambda^*}\cap \mathcal{A}^+$, each $w_n$ satisfies 
		$$
		1-\lambda^*\|w_n\|_p^p=E_{\lambda^*}(w_n)
		=\dfrac{1}{\|\nabla u_n\|_p^{p-q}}\,\intO a |w_n|^q \, dx.
		$$
		Letting $n\to +\infty$ and recalling that $\lambda^*=\lambda_1(p)$, we get $w_0\not\equiv0$ a.e.\ in $\Omega$ and 
		$E_{\lambda^*}(w_0)=0$, 
		whence $\{w_n\}$ converges to $\varphi_p$ strongly in $\W$. 
		According to Lemma~\ref{lem:convergence-to-eigenf}, we derive 
		the following contradiction:
		$$
		0=\lim_{n\to +\infty}I_{\lambda^*}(t_{\lambda^*}(w_n)w_n)
		=\lim_{n\to +\infty}I_{\lambda^*}(u_n)=M(\lambda^*)<0, 
		$$
		where we used the equality $t_{\lambda^*}(w_n)w_n=u_n$ by $u_n\in\mathcal{N}_{\lambda^*}$, $n \in \mathbb{N}$.
		That is, $\{u_n\}$ is bounded in $\W$, and hence $\{u_n\}$ converges to some $u_0 \in \W$ weakly $\W$ and strongly in $L^p(\Omega)$, up to a subsequence.
		Since $E_{\lambda^*}(u_0)\ge 0$ in view of $\lambda^*=\lambda_1(p)$, 
		the inequality $0>M(\lambda^*)\ge I_{\lambda^*}(u_0)$ 
		leads to $\intO a|u_0|^q\,dx>0$, and hence 
		$u_0\not\in\mathbb{R}\varphi_p$ because of the assumption 
		$\intO a\varphi_p^q\,dx=0$. 
		Hence, the simplicity of 
		$\lambda^*=\lambda_1(p)$ guarantees $E_{\lambda^*}(u_0)>0$, and we get
		$$
		M(\lambda^*)=\liminf_{n\to+\infty}I_{\lambda^*}(u_n)
		\ge I_{\lambda^*}(u_0)\ge I_{\lambda^*}(t_{\lambda^*}(u_0)u_0) 
		\ge M(\lambda^*), 
		$$
		which means that $u_0$ is a minimizer of $M(\lambda^*)$.
	\end{proof*}

	\begin{proof*}{the assertion \ref{NoGroundStates:3}} 
		Since $\intO a\varphi_p^q\,dx>0$, we have $\lambda^*=\lambda_1(p)$ by Proposition \ref{prop:+}, and hence $E_{\lambda^*}(\varphi_p)=0$. 
		Fix any $\theta \in \W \setminus \mathbb{R}\varphi_p$ and consider $w_\varepsilon := \varphi_p + \varepsilon \theta$ for $\varepsilon > 0$. 
		The simplicity of $\lambda_1(p)$ ensures that  $E_{\lambda^*}(w_\varepsilon) > 0$.
		Thus, we have $t_{\lambda^*}(w_\varepsilon)w_\varepsilon \in \mathcal{N}_{\lambda^*} \cap \A^+$ for any sufficiently small $\varepsilon>0$, and 
		\begin{align*} 
			M(\lambda^*) \leq  
			I_{\lambda^*}(t_{\lambda^*}(w_\varepsilon)w_\varepsilon)
			=J_{\lambda^*}(w_\varepsilon) 
			=-\, \frac{p-q}{p q}\, \frac{(\int_\Omega a |w_\varepsilon|^q \,dx)^{\frac{p}{p-q}}}{E_{\lambda^*}(w_\varepsilon)^{\frac{q}{p-q}}}
			\to -\infty 
		\end{align*}
		as $\varepsilon\to 0$, since 
		$\intO a|w_\varepsilon|^q\,dx\to \intO a\varphi_p^q\,dx>0$ and 
		$E_{\lambda^*}(w_\varepsilon) \to E_{\lambda^*}(\varphi_p)=0$. 
	\end{proof*}

	\subsection{Properties of the critical set of \texorpdfstring{$\widetilde{M}(\lambda^*)$}{M-tilde(lambda-*)}}\label{sec:K*}
	Throughout this section, we always assume that either of the following two assumptions is satisfied:
	\begin{enumerate}[label={\rm(\Roman*)}]
		\item\label{assump:1} $\intO a\varphi_p^q\,dx< 0$.
		\item\label{assump:2} $\intO a\varphi_p^q\,dx= 0$, $p>2q$, and $\partial \Omega$ is connected provided $N\ge 2$. 
	\end{enumerate}
	These assumptions coincide with the assumptions \ref{thm:1:1} and \ref{thm:1:2} of Theorem \ref{thm:1}, respectively.
	In both cases, Theorem \ref{NoGroundStates} asserts that $M(\lambda^*)$ is attained.
	Recall that $M(\lambda^*)=\widetilde{M}(\lambda^*)$, see Proposition \ref{prop:wm=m}. 
	
	In order to prove Theorem \ref{thm:1}, we need to establish several key properties of the set of minimizers of $\widetilde{M}(\lambda^*)$ (or, equivalently, the set of \textit{nonnegative} minimizers of $M(\lambda^*)$):
	\begin{equation}\label{eq:K}
		K^* 
		:=
		\{
		u \in \widetilde{\mathcal{N}}_{\lambda^*}:~ \widetilde{I}_{\lambda^*}(u)=\widetilde{M}(\lambda^*)
		\}.
	\end{equation}
	In particular, any $u \in K^*$ is a critical point of $\widetilde{I}_{\lambda^*}$. 
	\begin{lemma}\label{lem:1}
		$K^*$ is a compact set. 
		Moreover, 
		for any $u \in K^*$ there holds
		\begin{equation}\label{eq:lem1}
			\widetilde{E}_{\lambda^*}(u) 
			=
			\intO a u_+^q \,dx 
			=
			-\widetilde{M}(\lambda^*) \frac{pq}{p-q} > 0.
		\end{equation}
	\end{lemma}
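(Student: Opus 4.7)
The identity \eqref{eq:lem1} is the easy part, and is essentially algebraic. Fix $u \in K^*$. Since $u \in \widetilde{\mathcal{N}}_{\lambda^*}$ we have $\widetilde{E}_{\lambda^*}(u) = \intO a u_+^q\,dx$; substituting this into $\widetilde{I}_{\lambda^*}(u) = \widetilde{M}(\lambda^*)$ yields the truncated analogue of \eqref{eq:EonN1-0},
$$
\widetilde{I}_{\lambda^*}(u) = -\frac{p-q}{pq}\,\widetilde{E}_{\lambda^*}(u),
$$
which rearranges to \eqref{eq:lem1}. The strict positivity then follows from $\widetilde{M}(\lambda^*) = M(\lambda^*) \in (-\infty, 0)$, provided by Theorem~\ref{NoGroundStates}\ref{NoGroundStates:1},\ref{NoGroundStates:2} combined with Proposition~\ref{prop:wm=m}.

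For compactness, I would take an arbitrary sequence $\{u_n\} \subset K^*$. Each $u_n$ is a nonnegative critical point of $\widetilde{I}_{\lambda^*}$ (equivalently of $I_{\lambda^*}$) with $\widetilde{I}_{\lambda^*}(u_n) = \widetilde{M}(\lambda^*)$, so it suffices to extract a subsequence converging strongly in $\W$: the limit $u_0$ will then automatically be a nonzero ($\widetilde{I}_{\lambda^*}(u_0) = \widetilde{M}(\lambda^*) < 0$) nonnegative critical point of $\widetilde{I}_{\lambda^*}$, hence an element of $K^*$. Under assumption~\ref{assump:1}, one has $\lambda^* > \lambda_1(p)$ by Proposition~\ref{prop:+}, so Lemma~\ref{lem:PS} supplies the Palais--Smale condition for $\widetilde{I}_{\lambda^*}$, and since $\{u_n\}$ is trivially a Palais--Smale sequence, the result follows at once.

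The delicate case is~\ref{assump:2}, in which $\lambda^* = \lambda_1(p)$ and Lemma~\ref{lem:PS} is unavailable. Boundedness of $\{u_n\}$ in $\W$ has to be argued by contradiction. Assuming $\|\nabla u_n\|_p \to +\infty$, set $v_n := u_n/\|\nabla u_n\|_p$; dividing \eqref{eq:lem1} by $\|\nabla u_n\|_p^p$ forces $\widetilde{E}_{\lambda^*}(v_n) = 1 - \lambda^*\|v_n\|_p^p \to 0$. Along a subsequence, $v_n \rightharpoonup v_0$ in $\W$ and $v_n \to v_0$ in $L^p(\Omega)$, so $v_0 \geq 0$, $\|v_0\|_p^p = 1/\lambda^* \neq 0$, and $\|\nabla v_0\|_p^p \leq 1 = \lambda_1(p)\|v_0\|_p^p$; the definition and simplicity of $\lambda_1(p)$ then identify $v_0$ as a positive multiple of $\varphi_p$, and the equality of norms pins down $v_0 = \varphi_p$. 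Since additionally $\|\nabla v_n\|_p = \|\nabla \varphi_p\|_p = 1$, uniform convexity of $\W$ upgrades the weak convergence to $v_n \to \varphi_p$ strongly in $\W$. Noting that $E_{\lambda^*}(v_n) = \widetilde{E}_{\lambda^*}(v_n) > 0$ and $\intO a v_n^q\,dx > 0$ for every $n$ (both immediate from $u_n \in \widetilde{\mathcal{N}}_{\lambda^*}$ and \eqref{eq:lem1}), the hypotheses of Lemma~\ref{lem:convergence-to-eigenf}\ref{lem:convergence-to-eigenf:2} are met and it yields $I_{\lambda^*}(t_{\lambda^*}(v_n)v_n) \to 0$. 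Yet the $0$-homogeneity of $J_{\lambda^*}$ and $t_{\lambda^*}(u_n) = 1$ force
$$
I_{\lambda^*}(t_{\lambda^*}(v_n)v_n) = J_{\lambda^*}(v_n) = J_{\lambda^*}(u_n) = I_{\lambda^*}(u_n) = \widetilde{M}(\lambda^*) < 0,
$$
a contradiction. Once $\{u_n\}$ is bounded, Lemma~\ref{lem:conv-gs-PS} applied with the constant sequence $\lambda_n \equiv \lambda^*$ produces the required strongly convergent subsequence.

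The principal technical obstacle is exactly this contradiction argument: the failure of the Palais--Smale condition at the eigenvalue $\lambda^* = \lambda_1(p)$ in case~\ref{assump:2} must be defeated by controlling the precise rate at which $J_{\lambda^*}$ vanishes along normalized sequences approaching $\varphi_p$, and the sharp exponent assumption $p > 2q$ enters here through Lemma~\ref{lem:convergence-to-eigenf}\ref{lem:convergence-to-eigenf:2}.
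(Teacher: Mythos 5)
Your proof is correct and follows essentially the same route as the paper: the identity \eqref{eq:lem1} from the Nehari constraint together with Theorem~\ref{NoGroundStates}, the Palais--Smale condition (Lemma~\ref{lem:PS}) under assumption~\ref{assump:1}, and, under assumption~\ref{assump:2}, a contradiction to boundedness via convergence of the normalized sequence to $\varphi_p$ combined with Lemma~\ref{lem:convergence-to-eigenf}~\ref{lem:convergence-to-eigenf:2}. The only cosmetic difference is that you re-derive the strong convergence $v_n\to\varphi_p$ by hand (via the Rayleigh quotient and uniform convexity), whereas the paper simply invokes Lemma~\ref{lem:bdd-PS}, which applies here since each $u_n$ is an exact critical point.
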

	\begin{proof}
		If $\lambda^* > \lambda_1(p)$ (which is the case of  the assumption \ref{assump:1}), then the compactness of $K^*$ follows from Lemma \ref{lem:PS}.
		Suppose that $\lambda^* = \lambda_1(p)$, that is, we are under the assumption~\ref{assump:2}.
		As in Lemma \ref{lem:PS}, in view of the $(S_+)$-property of the $p$-Laplacian, it is sufficient to establish the boundedness of an arbitrary sequence $\{u_n\} \subset K^*$.
		Let $\{u_n\}$ be such sequence. 
		Suppose, by contradiction, that $\|\nabla u_n\|_p \to +\infty$ as $n \to +\infty$, up to a subsequence.
		Denoting $w_n := u_n/\|\nabla u_n\|_p$, we deduce from Lemma \ref{lem:bdd-PS} that 
		$w_n \to \varphi_p$ strongly in $\W$.
		Moreover, we evidently have $t_{\lambda^*}(w_n)w_n = u_n \in \mathcal{N}_{\lambda^*} \cap \mathcal{A}^+$.
		Applying Lemma \ref{lem:convergence-to-eigenf}, we get the following contradiction:
		$$
		0>\widetilde{M}(\lambda^*)
		=
		\widetilde{I}_{\lambda^*}(u_n)
		=
		\widetilde{I}_{\lambda^*}(t_{\lambda^*}(w_n)w_n) \to 0
		\quad
		\text{as}~ n \to +\infty.
		$$
		Finally, the equality \eqref{eq:lem1} directly follows from \eqref{eq:EonN1-0}.
	\end{proof}

	Let us now consider the closed $\delta$-neighborhood of $K^*$ with some $\delta>0$:
	\begin{equation}\label{eq:K*delta}
		K^*_\delta 
		:=
		\{
		u \in \W:~
		\text{dist}(u,K^*) \leq \delta
		\},
	\end{equation}
	where $\text{dist}(u,K^*) := \inf\{\|\nabla (u-v)\|_p: v \in K^*\}$.
	We notice that the infimum is achieved in view of the compactness of $K^*$.
	Hereinafter, by $B_\delta(u)$ we denote an open ball in $\W$ of radius $\delta$ centered at $u$.
	
	\begin{lemma}\label{lem:2}
		$K^*_\delta$ is weakly sequentially compact (and bounded). 
		Moreover, if $\delta>0$ is small enough, then there exists $C_1>0$ such that 
		\begin{equation}\label{eq:lem2}
			\widetilde{E}_{\lambda^*}(u) \geq C_1
			\quad \text{and} \quad
			\intO a u_+^q \,dx 
			\geq C_1
			\quad \text{for any} \quad  u \in K^*_\delta.
		\end{equation}
	\end{lemma}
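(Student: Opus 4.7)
The plan is to treat the two assertions separately, relying in both cases on the compactness of $K^*$ established in Lemma \ref{lem:1}.

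For weak sequential compactness, boundedness is immediate: since $K^*$ is compact, $R := \sup_{v \in K^*} \|\nabla v\|_p < +\infty$, and hence $\|\nabla u\|_p \leq R + \delta$ for every $u \in K^*_\delta$. Given any sequence $\{u_n\} \subset K^*_\delta$, the reflexivity of $\W$ provides a subsequence with $u_n \rightharpoonup u_0$ weakly in $\W$. For each $n$, pick $v_n \in K^*$ attaining the distance, so that $\|\nabla(u_n - v_n)\|_p \leq \delta$; by compactness of $K^*$, a further subsequence satisfies $v_n \to v_0 \in K^*$ strongly in $\W$. Then $u_n - v_n \rightharpoonup u_0 - v_0$, and the weak lower semicontinuity of $\|\nabla \cdot\|_p$ yields $\|\nabla(u_0 - v_0)\|_p \leq \liminf_{n \to +\infty} \|\nabla(u_n - v_n)\|_p \leq \delta$. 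Hence $u_0 \in K^*_\delta$.

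For the quantitative lower bounds, set $C_0 := -\widetilde{M}(\lambda^*)\,pq/(p-q)$, which is strictly positive by the definition \eqref{eq:K} of $K^*$ and the identity \eqref{eq:lem1}. I argue by contradiction: if no admissible $\delta$ existed, one could find $\delta_n \to 0$ and $u_n \in K^*_{\delta_n}$ with either $\widetilde{E}_{\lambda^*}(u_n) \to 0$ or $\intO a (u_n)_+^q\,dx \to 0$ along a subsequence. Choose $v_n \in K^*$ with $\|\nabla(u_n - v_n)\|_p \leq \delta_n$; compactness of $K^*$ produces $v_0 \in K^*$ with $v_n \to v_0$ strongly in $\W$ (along a subsequence), and therefore $u_n \to v_0$ strongly in $\W$ as well. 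Continuity of $\widetilde{E}_{\lambda^*}$ on $\W$ and of $u \mapsto \intO a u_+^q\,dx$ (using the compact embedding $\W \hookrightarrow L^q(\Omega)$ together with $a \in C(\overline{\Omega})$) then forces $\widetilde{E}_{\lambda^*}(u_n) \to \widetilde{E}_{\lambda^*}(v_0) = C_0$ and $\intO a (u_n)_+^q\,dx \to \intO a (v_0)_+^q\,dx = C_0$, contradicting the assumption that the limits were zero. Hence, for all sufficiently small $\delta>0$, one can take $C_1 := C_0/2$ (or any constant in $(0,C_0)$).

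No serious obstacle is expected here; the only delicate point is to make sure that the distance in \eqref{eq:K*delta} is realized (so that the selection $v_n$ is available), which is guaranteed by the compactness of $K^*$ shown in Lemma \ref{lem:1}. Everything else reduces to standard continuity and weak-compactness arguments in $\W$.
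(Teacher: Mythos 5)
Your proof is correct, and it rests on the same two ingredients as the paper's: the compactness of $K^*$ from Lemma \ref{lem:1} (which also justifies selecting the nearest point $v_n$) and the identity \eqref{eq:lem1} combined with the continuity of $\widetilde{E}_{\lambda^*}$ and of $u \mapsto \intO a u_+^q\,dx$. The first part is in substance identical to the paper's argument (the paper passes through the weak closedness of the balls $\overline{B_{\varepsilon+\delta}(v_0)}$, you use weak lower semicontinuity of the norm applied to $u_n-v_n$; these are the same observation). For the lower bounds, the paper argues directly: around each $v \in K^*$ it takes a closed ball on which both quantities stay above $-\tfrac{\widetilde{M}(\lambda^*)}{2}\tfrac{pq}{p-q}$, extracts a finite subcover by balls of half radius, and chooses $\delta$ smaller than half the minimal radius, so that $K^*_\delta$ sits inside the union of the full balls; you instead run a sequential-compactness contradiction with $\delta_n \to 0$. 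Both routes are equally valid and yield the same constant $C_1 = C_0/2$; the covering argument is constructive in the constant while yours is marginally shorter. One small imprecision: the negation of the claim only gives that, for each $n$, one of the two quantities is below the prescribed threshold (e.g.\ $1/n$ or $C_0/2$), and since these quantities may be negative on $K^*_\delta$ you cannot assert that they tend to $0$; but the contradiction with their convergence to $C_0 = -\widetilde{M}(\lambda^*)\,pq/(p-q) > 0$ goes through unchanged, so this does not affect the proof.
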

	\begin{proof}
		The boundedness simply follows from the compactness of $K^*$.
		Hence, any $\{u_n\} \subset K^*_\delta$ converges weakly in $\W$ to some $u_0 \in \W$, up to a subsequence.
		In view of the definition of $K^*_\delta$, for any $n \in \mathbb{N}$ there exists $v_n \in K^*$ such that $\|\nabla (u_n-v_n)\|_p \leq \delta$.
		Since $K^*$ is compact, $\{v_n\}$ converges to some $v_0 \in K^*$ strongly in $\W$, up to a subsequence.
		Thus, for any $\varepsilon>0$ and any sufficiently large $n$, $v_n \in B_\varepsilon(v_0)$. 
		Consequently, we have $u_n \in {B_{\varepsilon+\delta}(v_0)}$. 
		Since $\overline{B_{\varepsilon+\delta}(v_0)}$ is weakly closed, we conclude that $u_0 \in \overline{B_{\varepsilon+\delta}(v_0)}$. 
		Recalling that $\varepsilon>0$ was arbitrary, we deduce that $u_0 \in \overline{B_{\delta}(v_0)}$, and hence $u_0 \in K^*_\delta$. 
		
		Let us prove the second part of the lemma.
		By the continuity 
		and in view of Lemma \ref{lem:1}, for every $v \in K^*$ there exists $\varepsilon(v) >0$ such that 
		\begin{equation}\label{eq:lem2:2}
			\widetilde{E}_{\lambda^*}(u) \geq 
			-\frac{\widetilde{M}(\lambda^*)}{2} \frac{pq}{p-q}
			\quad \text{and} \quad
			\intO a u_+^q \,dx 
			\geq 
			-\frac{\widetilde{M}(\lambda^*)}{2} \frac{pq}{p-q}
		\end{equation}
		for any $u \in \overline{B_{\varepsilon(v)}(v)}$.
		Since $K^*$ is compact, there exist $v_1,\dots,v_k \in K^*$ such that $K^* \subset \cup_{j=1}^k B_{\varepsilon({v_j})/2}(v_j)$.
		Taking any $0<\delta<\min\{\varepsilon({v_1}),\dots,\varepsilon({v_k})\}/2$,
		we conclude that $K^*_\delta \subset \cup_{j=1}^k \overline{B_{\varepsilon(v_j)}(v_j)}$  since 
		$\varepsilon({v_j})/2 +\delta\le \varepsilon({v_j})$ for each $j$. 
		That is, \eqref{eq:lem2:2} remains valid for any $u \in K^*_\delta$.
	\end{proof}
	
	Finally, we consider the boundary of $K^*_\delta$:
	\begin{equation}\label{eq:K*delta-par}
		\partial K^*_\delta 
		:=
		\{
		u \in \W:~
		\text{dist}(u,K^*) = \delta
		\}.
	\end{equation}
	The main property of $\partial K^*_\delta$ is given in the following lemma which says that the elements of $\partial K^*_\delta$ have strictly higher energy than the elements of $K^*$. 
	\begin{lemma}\label{lem:3}
		Let $\delta>0$ be small enough to satisfy \eqref{eq:lem2}.
		Then
		$$
		\widetilde{M}(\lambda^*)<\inf\{\widetilde{I}_{\lambda^*}(u):~u\in \partial K^*_\delta\,\}. 
		$$
	\end{lemma}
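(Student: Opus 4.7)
The plan is to argue by contradiction: assume there is a sequence $\{u_n\}\subset\partial K^*_\delta$ with $\widetilde{I}_{\lambda^*}(u_n)\to\widetilde{M}(\lambda^*)$ and aim to show that $u_n\to v_0$ strongly in $\W$ for some $v_0\in K^*$, which contradicts $\mathrm{dist}(u_n,K^*)=\delta>0$.

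First I would exploit the fibering from Section~\ref{sec:fiber}. By Lemma~\ref{lem:2}, both $\widetilde{E}_{\lambda^*}(u_n)$ and $\intO a(u_n)_+^q\,dx$ are bounded below by $C_1>0$, so
$$t_n:=\left(\frac{\intO a(u_n)_+^q\,dx}{\widetilde{E}_{\lambda^*}(u_n)}\right)^{1/(p-q)}$$
is well-defined and stays in a compact subset of $(0,\infty)$, while $v_n:=t_n u_n$ lies in $\widetilde{\mathcal{N}}_{\lambda^*}$. The fibering inequality gives $\widetilde{M}(\lambda^*)\leq\widetilde{I}_{\lambda^*}(v_n)\leq\widetilde{I}_{\lambda^*}(u_n)$, so $\{v_n\}$ is a minimizing sequence for $\widetilde{M}(\lambda^*)$ on $\widetilde{\mathcal{N}}_{\lambda^*}$. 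Passing to subsequences I may assume $v_n\rightharpoonup v_0$ weakly in $\W$, $(v_n)_+\to (v_0)_+$ strongly in $L^p(\Omega)$ and $L^q(\Omega)$, and $t_n\to t^*\in(0,\infty)$.

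Next I would show that $v_0\in K^*$ and $v_n\to v_0$ strongly. From $\widetilde{I}_{\lambda^*}(v_n)=-\frac{p-q}{pq}\intO a(v_n)_+^q\,dx\to\widetilde{M}(\lambda^*)$ and strong $L^q$-convergence of $(v_n)_+$, we get $\alpha:=\intO a(v_0)_+^q\,dx=-\frac{pq}{p-q}\widetilde{M}(\lambda^*)>0$, and the Nehari identity gives $\widetilde{E}_{\lambda^*}(v_n)\to\alpha$. Under either of the assumptions \ref{assump:1}, \ref{assump:2}, Proposition~\ref{prop:+} implies $\lambda^*=\lambda_+^*$, so the admissibility of $(v_0)_+$ in the definition of $\lambda_+^*$ forces $\widetilde{E}_{\lambda^*}(v_0)\geq\|\nabla(v_0)_+\|_p^p-\lambda_+^*\|(v_0)_+\|_p^p\geq 0$; moreover the equality $\widetilde{E}_{\lambda^*}(v_0)=0$ would force $v_0\geq 0$ and $(v_0)_+$ to attain $\lambda_+^*$, contradicting Proposition~\ref{prop:+}. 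Hence $0<\widetilde{E}_{\lambda^*}(v_0)\leq\alpha$ by weak lower semicontinuity of $\|\nabla\cdot\|_p^p$ combined with $\|(v_n)_+\|_p\to\|(v_0)_+\|_p$, and the fibering scalar $\tau:=(\alpha/\widetilde{E}_{\lambda^*}(v_0))^{1/(p-q)}\geq 1$ satisfies $\tau v_0\in\widetilde{\mathcal{N}}_{\lambda^*}$ and $\widetilde{I}_{\lambda^*}(\tau v_0)=-\frac{p-q}{pq}\tau^q\alpha\geq\widetilde{M}(\lambda^*)=-\frac{p-q}{pq}\alpha$, which forces $\tau=1$ and thus $v_0\in K^*$. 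Finally, $\widetilde{E}_{\lambda^*}(v_n)\to\widetilde{E}_{\lambda^*}(v_0)$ together with $\|(v_n)_+\|_p\to\|(v_0)_+\|_p$ yields $\|\nabla v_n\|_p\to\|\nabla v_0\|_p$, and the uniform convexity of $\W$ upgrades the weak convergence to strong.

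It remains to pin down $t^*=1$. Passing to the limit in
$$\widetilde{I}_{\lambda^*}(u_n)=\frac{t_n^{-p}}{p}\,\widetilde{E}_{\lambda^*}(v_n)-\frac{t_n^{-q}}{q}\intO a(v_n)_+^q\,dx\;\longrightarrow\;\widetilde{M}(\lambda^*)$$
and dividing by $\alpha>0$ gives the scalar equation $\frac{(t^*)^{-p}-1}{p}=\frac{(t^*)^{-q}-1}{q}$. The derivative of $s\mapsto\frac{s^{-p}-1}{p}-\frac{s^{-q}-1}{q}$ is $-s^{-p-1}+s^{-q-1}$, which changes sign only at $s=1$ and makes $s=1$ a strict minimum, so $t^*=1$ is the unique positive zero. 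Consequently $u_n=v_n/t_n\to v_0\in K^*$ strongly in $\W$, contradicting $\mathrm{dist}(u_n,K^*)=\delta>0$. The most delicate point of the proof is ensuring $\widetilde{E}_{\lambda^*}(v_0)>0$ so that the fibering can be applied to the limit; this is precisely where the identity $\lambda^*=\lambda_+^*$ and the non-attainability of $\lambda_+^*$ from Proposition~\ref{prop:+} play an essential role, and this is also where the strict inequality in the statement of the lemma ultimately originates.
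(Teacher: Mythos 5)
Your proof is correct, and it reaches the conclusion by a genuinely different mechanism than the paper, even though the overall scheme (contradiction, boundedness and the lower bounds \eqref{eq:lem2}, fibering) is the same. The paper first records the elementary inequality $\widetilde{M}(\lambda^*)\le\widetilde{I}_{\lambda^*}(u)$ for \emph{all} $u\in K^*_\delta$ (via \eqref{eq:chain1}), and then applies weak lower semicontinuity directly to the sequence $\{u_n\}\subset\partial K^*_\delta$: any strict drop of $\|\nabla u_n\|_p$ in the limit would push $\widetilde{I}_{\lambda^*}(u_0)$ strictly below $\widetilde{M}(\lambda^*)$, contradicting that inequality, so $u_n\to u_0$ strongly, $u_0\in\partial K^*_\delta$, and the equality chain forces $u_0\in K^*$, which is impossible. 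You instead project $u_n$ onto $\widetilde{\mathcal{N}}_{\lambda^*}$, show the projected sequence $v_n=t_nu_n$ is minimizing for $\widetilde{M}(\lambda^*)$, prove strong subconvergence of $v_n$ to a minimizer $v_0\in K^*$, and then recover $t_n\to1$ from the scalar fiber equation. Both routes are sound; the paper's is shorter because the preliminary inequality on $K^*_\delta$ lets it bypass the Nehari projection and the identification of the limiting fiber parameter entirely. One remark on your "most delicate point": the positivity $\widetilde{E}_{\lambda^*}(v_0)>0$, which you obtain from $\lambda^*=\lambda^*_+$ and the non-attainability of $\lambda^*_+$ in Proposition \ref{prop:+}, is available more cheaply -- since $K^*_\delta$ is weakly sequentially closed (Lemma \ref{lem:2}), the weak limit $u_0$ of $u_n$ already lies in $K^*_\delta$ and hence satisfies $\widetilde{E}_{\lambda^*}(u_0)\ge C_1>0$ by \eqref{eq:lem2}, so $\widetilde{E}_{\lambda^*}(v_0)=(t^*)^p\widetilde{E}_{\lambda^*}(u_0)>0$ without invoking Proposition \ref{prop:+} at this stage (its content is already baked into the compactness of $K^*$ through Lemmas \ref{lem:1} and \ref{lem:2}). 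Accordingly, your closing claim that the strict inequality "ultimately originates" in the non-attainability of $\lambda^*_+$ is a fair heuristic but not how the dependence is organized in the paper.
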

	\begin{proof}
		Let $\delta>0$ be as required. 
		We have
		\begin{equation}\label{eq:lem3:1}
			\widetilde{M}(\lambda^*) 
			\leq 
			\widetilde{I}_{\lambda^*}(u)
			~~\text{for any}~
			u \in K^*_\delta.
		\end{equation}
		Indeed, in view of \eqref{eq:lem2}, 	
		for any $u\in K^*_\delta$ there exists $t>0$ such that $tu \in \widetilde{\mathcal{N}}_{\lambda^*}$ and $\widetilde{I}_{\lambda^*}(tu)<0$ (cf.\ Section \ref{sec:fiber}), and hence
		\begin{equation}\label{eq:chain1}
		\widetilde{M}(\lambda^*) \leq \widetilde{I}_{\lambda^*}(tu)
		=
		\min_{s>0} \widetilde{I}_{\lambda^*}(su)
		\leq 
		\widetilde{I}_{\lambda^*}(u).
		\end{equation}
		
		Suppose now, by contradiction to our main claim, that there exists a sequence $\{u_n\} \subset \partial K^*_\delta$ such that $\widetilde{I}_{\lambda^*}(u_n) \searrow \widetilde{M}(\lambda^*)$. Since $K^*_\delta$ is weakly sequentially compact by Lemma \ref{lem:2}, $\{u_n\}$ converges to some $u_0\in K^*_\delta$ weakly in $\W$ and strongly in $L^p(\Omega)$, up to a subsequence.
		If $\|\nabla u_0\|_p < \liminf\limits_{n\to +\infty} \|\nabla u_n\|_p$, i.e., there is no strong convergence in $\W$, then we get
		$$
		\widetilde{I}_{\lambda^*}(u_0)
		<
		\liminf_{n\to +\infty} 
		\widetilde{I}_{\lambda^*}(u_n) = \widetilde{M}(\lambda^*),
		$$
		a contradiction to \eqref{eq:lem3:1}.
		Therefore, $\|\nabla u_0\|_p = \liminf\limits_{n\to +\infty} \|\nabla u_n\|_p$, which means that $u_n \to u_0$ strongly in $\W$, and hence $u_0 \in \partial K^*_\delta \,(\subset K^*_\delta)$. 
		In view of \eqref{eq:chain1}, we obtain the chain
		$$
		\widetilde{M}(\lambda^*) =\lim_{n\to +\infty} \widetilde{I}_{\lambda^*}(u_n)= \widetilde{I}_{\lambda^*}(u_0) \geq \min_{s>0} \widetilde{I}_{\lambda^*}(su_0)
		\geq 
		\widetilde{M}(\lambda^*),
		$$
		which implies that $u_0 \in \widetilde{\mathcal{N}}_{\lambda^*}$ and $u_0$ is a minimizer of $\widetilde{M}(\lambda^*)$.
		That is, $u_0 \in K^*$.
		Thus, we have $u_0 \in K^*$ and $u_0 \in \partial K^*_\delta$, simultaneously.
		But this is impossible since $K^* \cap \partial K^*_\delta = \emptyset$.
	\end{proof}
	
	\begin{remark}
		Any minimizer of ${M}(\lambda^*)$ is a local minimum point of $\I$ (see Proposition \ref{prop:charact}), but we do not know a priori whether these minimizers are \textit{strict} local minimum points.
		To put this less formally, Lemma \ref{lem:3} asserts that a neighborhood of the set of all nonnegative minimizers of $M(\lambda^*)$ has a strict local minimum type structure.
	\end{remark}

	\section{Qualitative properties of \texorpdfstring{$M$}{M} and \texorpdfstring{$M^-$}{M-}}\label{sec:qualitative}
	
	In this section, we establish several properties of the levels $M$ and $M^-$ and the corresponding minimizers.
	In particular, we prove Propositions \ref{prop:charact}, \ref{prop:properties-m}, and \ref{prop:PCV}.
 
	\begin{proof*}{Proposition~\ref{prop:charact}}\label{page:prop:charact}
		Let $u$ be a minimizer of $M(\lambda)$. 
		Since $M(\lambda)<0$ by Remark \ref{rem:m<0},
		we deduce from Lemma \ref{lemm:crit} that $u$ is a solution of \eqref{eq:P}.
		If $u$ is not a ground state of \eqref{eq:P}, then there exists a solution $w$ such that $\I(w) < \I(u)$, which contradicts the definition of $M(\lambda)$ since $w \in \N$. 
		Let us show that $u$ is a local minimum point of $\I$. 
		Since $0<E_\lambda(u)=\intO a|u|^q\,dx$, we can find a sufficiently small $r>0$ such that 
		$E_\lambda(v)>0$ and $\intO a|v|^q\,dx>0$ for any
		$v\in B_r(u)$, i.e., $\|\nabla (u-v)\|_p<r$. 
		Thus, for each $v\in B_r(u)$ we have
		$t_{\lambda}(v)v\in \mathcal{N}_\lambda\cap \mathcal{A}^+$, and therefore
		$$
		\I(u)=\inf \{\I(w):\, w \in \mathcal{N}_\lambda\} 
		\le  \I(t_{\lambda}(v)v)
		=\min_{t > 0} \I(tv)\le  \I(v), 
		$$
		which means that $u$ is a local minimizer of $\I$.
		
		Let us prove that either $u>0$ or $u<0$ in $\Omega_a^+$.
		Clearly, if $u$ is a minimizer of $M(\lambda)$, then so is $v:=|u|$. 
		Hence $v$ is a nonnegative local minimum point of $\I$, and we deduce from Lemma~\ref{lem:positivity} that $v>0$ in $\Omega_a^+$.
		Consequently, $u$ cannot change the sign in $\Omega_a^+$.
		
		Let us show that the minimizer $u$ is a global minimum point of $\I$ whenever $\lambda \leq \lambda_1(p)$. 
		The case $\lambda < \lambda_1(p)$ is simple, see the discussion at the beginning of Section \ref{sec:subcritical}.
		Let $\lambda=\lambda_1(p)$ and suppose, by contradiction, that there exists $w$ such that $\I(w) < \I(u) \,(<0)$.
		Noting that $E_\lambda(w) \geq 0$ by the definition of $\lambda_1(p)$, we obtain $\intO a |w|^q \,dx > 0$.
		If $E_\lambda(w) > 0$, then $t_\lambda(w)w \in \N \cap \mathcal{A}^+$ and $\I(t_\lambda(w)w) \leq \I(w)$, and hence we get a contradiction to the definition of $M(\lambda)$.
		If $E_\lambda(w)=0$, then $w = t\varphi_p$ for some $t \neq 0$. 
		That is, $\intO a \varphi_p^q \,dx > 0$. 
		But in this case Theorem \ref{NoGroundStates} \ref{NoGroundStates:3} yields $M(\lambda)=-\infty$, which is impossible since $u$ is a minimizer of $M(\lambda)$ by the assumption.
	\end{proof*}

	\begin{proof*}{Proposition~\ref{prop:properties-m}}\label{page:proof:properties-m}
		\ref{prop:properties-m:1} 
		We start by showing that $M$ is (strictly) decreasing on $(-\infty,\lambda^*]$. 
		Recall that $M(\lambda)$ is attained for any $\lambda<\lambda^*$, see Theorem \ref{thm:GS}.
		Let $u_\lambda$ be 
		a corresponding  minimizer, that is, 
		$u_\lambda\in\mathcal{N}_\lambda\cap \mathcal{A}^+$ and $M(\lambda)=\I(u_\lambda)$. 
		
		Taking any $\lambda<\mu<\lambda^*$, we conclude from the definition \eqref{eq:lambda+*} of $\lambda^*$ that $E_\lambda(u_\lambda)>E_\mu(u_\lambda)>0$.
		Therefore, $t_\mu(u_\lambda)u_\lambda\in\mathcal{N}_\mu\cap \mathcal{A}^+$, and we obtain the monotonicity:
		\begin{align*} 
			M(\lambda)
			=\I (u_\lambda)=J_\lambda(u_\lambda) 
			&=
			-\,\frac{p-q}{p q}\, \frac{\left(\int_\Omega a |u_\lambda|^q \,dx\right)^{\frac{p}{p-q}}}{\left(E_\lambda(u_\lambda)\right)^{\frac{q}{p-q}}}
			\\ 
			&>-\,\frac{p-q}{p q}\, \frac{\left(\int_\Omega a |u_\lambda|^q \,dx\right)^{\frac{p}{p-q}}}{\left(E_\mu(u_\lambda)\right)^{\frac{q}{p-q}}}
			=J_\mu(u_\lambda)=I_\mu(t_\mu(u_\lambda)u_\lambda)\ge M(\mu).
		\end{align*} 
		Let us show now that $M(\lambda)> M(\lambda^*)$ for any $\lambda<\lambda^*$.
		In view of the definition of $\lambda^*$, we have $E_{\lambda^*}(u_\lambda) \geq 0$.
		If $E_{\lambda^*}(u_\lambda)>0$, then the same arguments as above yield the desired monotonicity.
		Assume that $E_{\lambda^*}(u_\lambda)=0$.
		Recalling that $\intO a |u_\lambda|^q \,dx > 0$, we conclude that $u_\lambda$ is a minimizer of $\lambda^*$, and hence Proposition~\ref{prop:+} implies that $\lambda^*=\lambda_1(p)$ and $u_\lambda = t\varphi_p$ for some $t \neq 0$. 
		Therefore, Theorem \ref{NoGroundStates} \ref{NoGroundStates:3} gives $M(\lambda^*)=-\infty$, and the monotonicity of $M(\lambda)$ follows.
		
		Finally, recalling that $M(\lambda)=-\infty$ for any $\lambda>\lambda^*$ (see Theorem \ref{thm:GS}), we deduce that $M$ is nonincreasing on $\mathbb{R}$, which completes the proof.
		
		\ref{prop:properties-m:3}
		The proof follows from Proposition \ref{prop:behavior-gs} \ref{prop:behavior-gs:1} below.
		
		\ref{prop:properties-m:4}
		The proof follows from Proposition \ref{prop:behavior-gs} \ref{prop:behavior-gs:2}, \ref{prop:behavior-gs:3} below.
	\end{proof*}

	Let us now prove the following general facts.
	\begin{proposition}\label{prop:behavior-gs} 
		Let $\{\lambda_n\} \subset \mathbb{R}$ be a convergent sequence 
		such that $\lambda_n<\lambda^*$, 
		and set $\lambda:=\lim\limits_{n\to+\infty}\lambda_n$. 
		Let $u_n$ be a ground state of {\renewcommand{\lambdaM}{\lambda_n}\eqref{eq:P}}. 
		Then the following assertions are satisfied:
		\begin{enumerate}[label={\rm(\roman*)}]
			\item\label{prop:behavior-gs:1} 
			Let $\lambda<\lambda^*$. 
			Then $\{u_n\}$ is bounded in $\W$ and has a subsequence strongly convergent in $\W$ to a ground state of \eqref{eq:P} as $n \to +\infty$.
			\item\label{prop:behavior-gs:2} 
			Let $\lambda=\lambda^*$ and $M(\lambda^*)=-\infty$. 
			Then $\lim\limits_{n\to+\infty}\|\nabla u_n\|_p = +\infty$, $\lim\limits_{n\to+\infty}\In(u_n) = -\infty$,  $|u_n|/\|\nabla u_n\|_p \to \varphi_p$ strongly in $\W$ as $n \to +\infty$, and $\lambda^*=\lambda_1(p)$.
			\item\label{prop:behavior-gs:3} 
			Let $\lambda=\lambda^*$ and $M(\lambda^*)>-\infty$. 
			Then 
			$\lim\limits_{n\to+\infty}\In(u_n)=M(\lambda^*)$. 
		\end{enumerate}
	\end{proposition}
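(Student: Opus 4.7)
The plan is to treat the three cases separately, with (i) carrying the bulk of the work; throughout I rely on the Nehari identity \eqref{eq:EonN1-0}, which gives $\In(u_n)=-\frac{p-q}{pq}E_{\lambda_n}(u_n)=-\frac{p-q}{pq}\intO a|u_n|^q\,dx=M(\lambda_n)$.

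For (i), once the boundedness of $\{u_n\}$ in $\W$ is in hand, the conclusion is a direct application of Lemma \ref{lem:conv-gs}: fixing any $w\in\mathcal{N}_\lambda\cap\mathcal{A}^+$ (whose existence is given by Lemma \ref{lem:NehariEmpty}) and using the strict positivity $E_\lambda(w)>0$, we obtain $t_{\lambda_n}(w)w\in\mathcal{N}_{\lambda_n}$ for all large $n$, whence $\liminf \In(u_n)=\liminf M(\lambda_n)\le \I(w)<0$ by continuity of $t_{\lambda_n}$ and $\In$ in the parameter, and Lemma \ref{lem:conv-gs} provides a strongly convergent subsequence to a minimizer of $M(\lambda)$. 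The boundedness itself is the crux, and I argue by contradiction: if $\|\nabla u_n\|_p\to+\infty$ along a subsequence, set $v_n:=u_n/\|\nabla u_n\|_p$. Dividing $\langle \In'(u_n),\xi\rangle=0$ by $\|\nabla u_n\|_p^{p-1}$ and using $q<p$ makes the $a$-term vanish in the limit, so the standard $(S_+)$-property of $-\Delta_p$ (applied with $\xi=v_n-v_0$) yields $v_n\to v_0$ strongly in $\W$ with $v_0$ a nonzero eigenfunction of the $p$-Laplacian at eigenvalue $\lambda$. Moreover, since $M(\lambda_n)$ is bounded by the monotonicity of $M$ on $(-\infty,\lambda^*)$ from Proposition \ref{prop:properties-m}~\ref{prop:properties-m:1}, the Nehari identity forces $\intO a|u_n|^q\,dx=O(1)$, hence $\intO a|v_n|^q\,dx=\|\nabla u_n\|_p^{-q}\intO a|u_n|^q\,dx\to 0$, so $\intO a|v_0|^q\,dx=0$. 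This makes $v_0$ admissible for $\lambda_0^*$ and produces the contradiction $\lambda=\|\nabla v_0\|_p^p/\|v_0\|_p^p\ge \lambda_0^*\ge\lambda^*$ via Proposition \ref{prop:+}, ruling out $\lambda<\lambda^*$.

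For (ii), the first step is to prove $M(\lambda_n)\to-\infty$: since $M(\lambda^*)=-\infty$, for each $k$ I pick $\theta_k\in\mathcal{N}_{\lambda^*}\cap\mathcal{A}^+$ with $I_{\lambda^*}(\theta_k)\le -k$; strict positivity of $E_{\lambda^*}(\theta_k)$ yields $t_{\lambda_n}(\theta_k)\theta_k\in\mathcal{N}_{\lambda_n}$ for large $n$, so $\limsup_n M(\lambda_n)\le I_{\lambda^*}(\theta_k)$, and letting $k\to+\infty$ gives the claim. Consequently $\In(u_n)\to-\infty$, which rules out the boundedness of $\{u_n\}$ since otherwise Lemma \ref{lem:conv-gs-PS} would supply a strong limit with finite energy. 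Normalizing $v_n=u_n/\|\nabla u_n\|_p$ and repeating the $(S_+)$-argument from (i), we obtain a strong limit $v_0$ which is an eigenfunction at $\lambda^*$ with $\|\nabla v_0\|_p=1$. The assumption $M(\lambda^*)=-\infty$ is incompatible with $\intO a\varphi_p^q\,dx<0$ by Theorem \ref{NoGroundStates}~\ref{NoGroundStates:1}, hence $\intO a\varphi_p^q\,dx\ge 0$, which via Proposition \ref{prop:+} forces $\lambda^*=\lambda_1(p)$. The simplicity of $\lambda_1(p)$ then pins $v_0=\pm\varphi_p$, and the continuity of $u\mapsto |u|$ on $\W$ together with $\varphi_p>0$ in $\Omega$ delivers $|v_n|\to\varphi_p$ strongly in $\W$ for the whole sequence.

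For (iii), the upper bound $\limsup M(\lambda_n)\le M(\lambda^*)$ follows by transplanting any $w\in\mathcal{N}_{\lambda^*}\cap\mathcal{A}^+$ with $I_{\lambda^*}(w)$ close to $M(\lambda^*)$ onto $\mathcal{N}_{\lambda_n}$ via $t_{\lambda_n}(w)$ as in (i), while the matching lower bound $\liminf M(\lambda_n)\ge M(\lambda^*)$ is immediate from the strict monotonicity of $M$ on $(-\infty,\lambda^*]$ given by Proposition \ref{prop:properties-m}~\ref{prop:properties-m:1}, together with $\In(u_n)=M(\lambda_n)$. The principal obstacle in the whole proposition is the boundedness step in (i): since ground states need not be sign-constant, Lemma \ref{lem:bdd-PS} is not directly applicable, and the contradiction hinges on the Nehari control over $\intO a|u_n|^q\,dx$ combined with the inequality $\lambda_0^*\ge\lambda^*$ supplied by Proposition \ref{prop:+}.
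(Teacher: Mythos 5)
Your proof is correct, and its skeleton matches the paper's: boundedness plus Lemma \ref{lem:conv-gs} for (i), unboundedness plus a normalization argument for (ii), and transplantation of near-minimizers via $t_{\lambda_n}(\cdot)$ together with monotonicity for (iii). The genuine divergence is in the compactness step. You assert that Lemma \ref{lem:bdd-PS} is ``not directly applicable'' because ground states need not be sign-constant; the paper circumvents this by observing that $|u_n|$ is itself a minimizer of $M(\lambda_n)$, hence a critical point of $I_{\lambda_n}$ with $I_{\lambda_n}'(|u_n|)=0$, so Lemma \ref{lem:bdd-PS} applies verbatim to $\{|u_n|\}$; it then splits into the cases $\lambda\neq\lambda_1(p)$ (immediate contradiction) and $\lambda=\lambda_1(p)$ (where $\int_\Omega a v_n^q\,dx\ge 0$ forces $\int_\Omega a\varphi_p^q\,dx\ge 0$ and hence $\lambda^*=\lambda_1(p)$, contradicting $\lambda<\lambda^*$). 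Your alternative — rerunning the $(S_+)$ computation on the possibly sign-changing $v_n=u_n/\|\nabla u_n\|_p$, then using the boundedness of $M(\lambda_n)$ and the Nehari identity to get $\int_\Omega a|v_0|^q\,dx=0$, so that $\lambda=\|\nabla v_0\|_p^p/\|v_0\|_p^p\ge\lambda_0^*\ge\lambda^*$ — is sound (indeed $\lambda_0^*\ge\lambda^*$ is immediate from the inclusion of admissible sets, no need even for Proposition \ref{prop:+}), and it buys you a single uniform argument with no case distinction on $\lambda=\lambda_1(p)$. Likewise in (ii), your route to $\lambda^*=\lambda_1(p)$ via the contrapositive of Theorem \ref{NoGroundStates}~\ref{NoGroundStates:1} is legitimate and non-circular, whereas the paper extracts it from Lemma \ref{lem:bdd-PS} applied to $\{|u_n|\}$; and your order of operations (first $M(\lambda_n)\to-\infty$, then unboundedness via Lemma \ref{lem:conv-gs-PS}) is a harmless permutation of the paper's. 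The only caveat worth recording is that the premise motivating your detour — that the lemma cannot be used — is false once one notices the $|u_n|$ trick; but your replacement argument is complete and correct.
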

	\begin{proof}
		Since $\lambda_n<\lambda^*$, the ground state $u_n$ is a minimizer of $M(\lambda_n)$, and hence $|u_n|$ is also a ground state.
		Therefore, each $|u_n|$ is a solution of {\renewcommand{\lambdaM}{\lambda_n}\eqref{eq:P}}
		satisfying
		\begin{equation}\label{eq:gs:0}
			E_{\lambda_n}(|u_n|)=\intO a |u_n|^q\,dx>0. 
		\end{equation}
		Taking $\mu$ such that
		$\mu<\lambda =\lim\limits_{n\to+\infty}\lambda_n$ and recalling that $M$ is decreasing by 
		Proposition~\ref{prop:properties-m}~\ref{prop:properties-m:1}, we observe that 
		\begin{equation}\label{eq:gs:1} 
			\limsup_{n\to+\infty} I_{\lambda_n}(u_n)=
			\limsup_{n\to+\infty} M(\lambda_n) 
			\leq
			M(\mu)<0.
		\end{equation}

		\ref{prop:behavior-gs:1} 
		Let $\lambda<\lambda^*$.
		We claim that $\{u_n\}$ is bounded in $\W$.
		Suppose, by contradiction, that $\|\nabla u_n\|_p \to +\infty$.
		Clearly, we also have $\|\nabla |u_n|\|_p \to +\infty$.
		If $\lambda \not=\lambda_1(p)$, then the contradiction follows from Lemma~\ref{lem:bdd-PS} applied to $\{|u_n|\}$. 
		In the case $\lambda=\lambda_1(p)$, Lemma \ref{lem:bdd-PS} ensures that the sequence of $v_n:=|u_n|/\|\nabla u_n\|_p$ converges to $\varphi_p$ strongly in $\W$, up to a subsequence.
		Since $\lim\limits_{n\to+\infty}\intO a v_n^q\,dx\ge 0$ by \eqref{eq:gs:0}, we have 
		$\intO a \varphi_p^q\,dx \ge 0$, and hence $\lambda^*=\lambda_1(p)$ by Proposition~\ref{prop:+}.
		Thus, $\lambda < \lambda^*=\lambda_1(p)$, which contradicts our assumption $\lambda = \lambda_1(p)$.
		Consequently, $\{u_n\}$ is bounded in $\W$, and Lemma \ref{lem:conv-gs} in combination with \eqref{eq:gs:1} guarantees that $\{u_n\}$ converges to a ground state of \eqref{eq:P} strongly in $\W$, up to a subsequence. 
		
		\ref{prop:behavior-gs:2} 
		If $\{u_n\}$ has a bounded subsequence, then, as above, 
		Lemma \ref{lem:conv-gs} in combination with \eqref{eq:gs:1} justifies the existence of a ground state of \eqref{eq:P}, and hence $M(\lambda^*)>-\infty$, 
		which contradicts our assumption $M(\lambda^*)=-\infty$. 
		Therefore, we have $\lim\limits_{n\to+\infty}\|\nabla u_n\|_p=+\infty$, and Lemma~\ref{lem:bdd-PS} implies that 
		the sequence of $|u_n|/\|\nabla u_n\|_p$ converges to
		$\varphi_p$ strongly in $\W$, up to a subsequence.
		Finally, let us show that $\lim\limits_{n\to+\infty}\In(u_n)=-\infty$. 
		Fix any $R>0$. 
		Since $M(\lambda^*)=-\infty$, we can choose $v\in \mathcal{N}_{\lambda^*}\cap\mathcal{A}^+$ such that $I_{\lambda^*}(v)\le -R$.  
		Thus, for any sufficiently large $n$ we have 
		$t_{\lambda_n}(v) v\in\mathcal{N}_{\lambda_n} \cap \mathcal{A}^+$, and so 
		$$
		\In (u_n)=M(\lambda_n) \le \In(t_{\lambda_n}(v) v)\le \In(v).
		$$
		This yields $\limsup\limits_{n\to+\infty}\In (u_n)\le I_{\lambda^*}(v)\le -R$. 
		Since $R>0$ is arbitrary, we conclude that $\lim\limits_{n\to+\infty}\In(u_n)=-\infty$.

		\ref{prop:behavior-gs:3} 
		We know from Proposition \ref{prop:properties-m} \ref{prop:properties-m:1} and Remark \ref{rem:m<0} that $0>\In(u_n) \geq M(\lambda^*)$ for any $n \in \mathbb{N}$.
		Suppose, contrary to our claim, that there exists $C>0$ such that 
		$\In(u_n) \geq M(\lambda^*)+C$ for any $n$.
		Since $M(\lambda^*)>-\infty$, we can find $u \in \mathcal{N}_{\lambda^*} \cap \A^+$ satisfying
		\begin{equation}\label{eq:In-bound}
			0> \In(u_n) \geq M(\lambda^*)+C > I_{\lambda^*}(u) \geq M(\lambda^*).
		\end{equation}
		Due to our assumptions $\lambda_n<\lambda^*$ and $u \in \mathcal{N}_{\lambda^*} \cap \A^+$, we have $E_{\lambda_n}(u)>E_{\lambda^*}(u)>0$ for any $n$, and hence $t_{\lambda_n}(u)u \in \mathcal{N}_{\lambda_n} \cap \mathcal{A}^+$. 
		This yields $I_{\lambda_n}(t_{\lambda_n}(u)u) \geq M(\lambda_n) = I_{\lambda_n}(u_n)$. 
		On the other hand, by the continuity, we have $I_{\lambda_n}(t_{\lambda_n}(u)u) \to I_{\lambda^*}(u)$ as $n \to +\infty$.
		This contradicts \eqref{eq:In-bound} for all sufficiently large $n$.
	\end{proof}

	\begin{proof*}{Proposition~\ref{prop:PCV}}\label{page:proof:PCV}
		For convenience, we prove our assertions in a nondirect order.
		
		\ref{prop:PCV:4}
		Recall that $\lambda^*=\lambda_0^*$ and $\lambda_0^*$ is attained, see Proposition \ref{prop:+}.
		Let $u_0$ be a minimizer of $\lambda_0^*$.
		In particular, $u_0$ satisfies
		$E_{\lambda^*}(u_0)=0=\intO a|u_0|^q\,dx$.
		Fixing any $\lambda>\lambda^*$, we have $E_\lambda(u_0)<0$. 
		Assume first that there exists $v\in\W$ such that $\intO a|u_0|^{q-2}u_0v\,dx<0$,  
		and set $u_\varepsilon:=u_0+\varepsilon v$ for $\varepsilon>0$. 
		Then, for any sufficiently small $\varepsilon>0$ we have 
		$$
		E_\lambda (u_\varepsilon)<0
		\quad 
		\text{and}
		\quad 
		\intO a|u_\varepsilon|^{q} \,dx
		=
		q
		\intO \,\int_0^\varepsilon a|u_s|^{q-2}u_sv \,ds\,dx <0.
		$$
		Consequently, $t_\lambda(u_\varepsilon)u_\varepsilon \in \mathcal{N}_\lambda\cap\mathcal{A}^-$ 
		for such $\varepsilon>0$, and  we get 
		\begin{equation}\label{eq:m-} 
			M^-(\lambda) = \inf_{u\in \mathcal{N}_\lambda\cap\mathcal{A}^-} \I (u) 
			\leq \I(t_\lambda(u_\varepsilon)u_\varepsilon)
			=
			\J(u_\varepsilon) 
			=\frac{p-q}{p q}\, \frac{|\int_\Omega a |u_\varepsilon|^q \,dx|^{\frac{p}{p-q}}}{|E_\lambda(u_\varepsilon)|^{\frac{q}{p-q}}}
			\to 0 
		\end{equation}
		as $\varepsilon\to 0$ because $E_\lambda(u_0)<0$ and $\int_\Omega a |u_0|^q \,dx=0$. 
		
		Assume now that $\intO a|u_0|^{q-2}u_0v\,dx=0$ for any $v\in\W$.
		This implies that $a|u_0|^{q-2}u_0 \equiv 0$ a.e.\ in $\Omega$, and hence $u_0 \equiv 0$ a.e.\ in $\Omega_a^\pm$.
		Take any $v \in C_0^\infty(\Omega) \setminus \{0\}$ with the support in $\Omega_a^-$ and set $u_\varepsilon:=u_0+\varepsilon v$.
		Recalling that $\intO a |u_0|^q \,dx=0$, we obtain
		$$
		\intO a|u_\varepsilon|^{q}\,dx=
		\intO a|u_0|^{q}\,dx
		+
		\varepsilon^q
		\intO a|v|^{q}\,dx
		=
		\varepsilon^q
		\intO a|v|^{q}\,dx 
		< 0
		$$
		for any $\varepsilon>0$. 
		Moreover, since $\lambda>\lambda^*$, we have $E_\lambda (u_\varepsilon)<0$ for any sufficiently small $\varepsilon>0$ by the continuity.
		Arguing as in \eqref{eq:m-} above, we obtain the desired result.  
		
		\ref{prop:PCV:1}
		Let $\lambda_1(p)<\lambda<\mu\leq\lambda^*$ and let $u_\lambda$ be a minimizer of $M^-(\lambda)$ which exists by Theorem~\ref{prop:m-minus}.
		Noting that 
		$$
		E_\mu(u_\lambda)<E_\lambda(u_\lambda)=\intO a|u_\lambda|^q\,dx<0, 
		$$
		we get  $t_\mu(u_\lambda)u_\lambda\in\mathcal{N}_\mu\cap \mathcal{A}^-$, and hence
		\begin{align*} 
			M^-(\lambda)
			=\I (u_\lambda)=J_\lambda(u_\lambda) 
			&=
			\frac{p-q}{p q}\, \frac{\left|\int_\Omega a |u_\lambda|^q \,dx\right|^{\frac{p}{p-q}}}{\left|E_\lambda(u_\lambda)\right|^{\frac{q}{p-q}}}
			\\ 
			&>\frac{p-q}{p q}\, \frac{\left|\,\int_\Omega a |u_\lambda|^q \,dx\,\right|^{\frac{p}{p-q}}}{\left|E_\mu(u_\lambda)\right|^{\frac{q}{p-q}}}
			=J_\mu(u_\lambda)=I_\mu(t_\mu(u_\lambda)u_\lambda)\ge M^-(\mu).
		\end{align*} 
		That is, $M^-$ is decreasing on $(\lambda_1(p),\lambda^*]$.
		The fact that $M^-$ is nonincreasing on $(\lambda_1(p),+\infty)$ follows from the assertion \ref{prop:PCV:4}.

		\ref{prop:PCV:2}
		Let $u_\lambda\ge 0$ be a minimizer of $M^-(\lambda)$ 
		with $\lambda \in (\lambda_1(p),\lambda^*)$. 
		It is shown in \cite[Theorem 4.1 (2)]{KQU} that 
		$C_\lambda u_\lambda \to C\varphi_p$ 
		in $C^1(\overline{\Omega})$ as $\lambda\to \lambda_1(p)+0$, 
		where $C_\lambda,C>0$ are suitable normalization constants, 
		and $C$ is independent of $\lambda$. 
		Therefore, we have 
		$$
		E_\lambda(C_\lambda u_\lambda)\to 
		E_{\lambda_1(p)}(C\varphi_p)=0\quad {\rm and}\quad 
		\intO a|C_\lambda u_\lambda|^q\,dx\to 
		C^q\intO a\varphi_p^q\,dx<0, 
		$$
		as $\lambda\to \lambda_1(p)+0$, and hence 
		\begin{align*} 
			M^-(\lambda)=\I(u_\lambda)=J_\lambda(u_\lambda)=
			J_\lambda(C_\lambda u_\lambda)=
			\frac{p-q}{p q}\, \frac{|\int_\Omega a |C_\lambda u_\lambda|^q \,dx|^{\frac{p}{p-q}}}{|E_\lambda(C_\lambda u_\lambda)|^{\frac{q}{p-q}}}\to +\infty 
		\end{align*}
		as $\lambda\to \lambda_1(p)+0$.

		\ref{prop:PCV:5}
		Let us show that $M^-(\lambda) \to M^-(\lambda^*)$ as $\lambda \to \lambda^*-0$.
		In view of the monotonicity stated in the assertion \ref{prop:PCV:1}, we suppose, by contradiction, that there exists a sequence $\{\lambda_n\} \subset (\lambda_1(p),\lambda^*)$ such that $\lim\limits_{n\to +\infty}\lambda_n=\lambda^*$ and $M^-(\lambda^*) < \liminf\limits_{n \to +\infty}M^-(\lambda_n)$.
		Chose any $u \in \mathcal{N}_{\lambda^*} \cap \A^-$ such that 
		\begin{equation}\label{eq:mminus1}
			M^-(\lambda^*) \leq I_{\lambda^*}(u) < \liminf\limits_{n \to +\infty}M^-(\lambda_n).
		\end{equation}
		By the continuity, we have $E_{\lambda_n}(u) \to E_{\lambda^*}(u) < 0$ as $n \to +\infty$. 
		This yields $t_{\lambda_n}(u)u \in \mathcal{N}_{\lambda_n} \cap \A^-$ for any sufficiently large $n$, and $t_{\lambda_n}(u) \to 1$, and therefore
		$$
		M^-(\lambda_n) \leq I_{\lambda_n}(t_{\lambda_n}(u)u) \to I_{\lambda^*}(u)
		~~\text{as}~ n \to +\infty,
		$$
		which contradicts \eqref{eq:mminus1}.

		\ref{prop:PCV:3}
		The continuity of $M^-(\lambda)$ follows from Proposition \ref{prop:behavior:m^-} below.
	\end{proof*}

	\begin{remark}\label{rem:positive-least-energy_2}
		The assertion \ref{prop:PCV:4} of Proposition \ref{prop:PCV} coincides with that of \cite[Lemma~2.9~(2)]{QS}. 
		This lemma is more general in nature, but its application to the problem \eqref{eq:P} requires an additional assumption on $a$ (see \cite[Section 3.1]{QS}).
		Arguing in much the same way as in the proof of the assertion \ref{prop:PCV:4} above, one can show that $M^-(\lambda)=0$ for any $\lambda>\lambda_0^* \,(\ge \lambda^*)$, regardless the sign of $\intO a \varphi_p^q \,dx$.
	\end{remark}

	\begin{proposition}\label{prop:behavior:m^-} 
		Assume that $\intO a\varphi_p^q\,dx< 0$. 
		Let $\{\lambda_n\} \subset \mathbb{R}$ be a convergent sequence 
		such that $\lim\limits_{n\to+\infty}\lambda_n =: \lambda \in (\lambda_1(p),\lambda^*)$. 
		Let $u_n$ be a minimizer of $M^-(\lambda_n)$. 
		Then $\{u_n\}$ is bounded in $\W$ and it has a 
		subsequence strongly convergent in $\W$
		to a minimizer $u_0\in\N\cap\mathcal{A}^-$ of $M^-(\lambda)$. 
	\end{proposition}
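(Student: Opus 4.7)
The proof will follow the same blueprint as Proposition~\ref{prop:behavior-gs}~\ref{prop:behavior-gs:1}, proceeding in three stages: (a) establish boundedness of $\{u_n\}$ in $\W$, (b) extract a limit $u_0$ solving \eqref{eq:P} and certify $u_0 \in \N \cap \mathcal{A}^-$, and (c) show $u_0$ attains $M^-(\lambda)$ via a fibered comparison. Since Proposition~\ref{prop:PCV}~\ref{prop:PCV:3} is not yet available, we shall work only with the monotonicity \ref{prop:PCV:1} and the limits \ref{prop:PCV:2}, \ref{prop:PCV:5}.

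For the boundedness, the plan is to replace $u_n$ by $|u_n|$, which is again a minimizer of $M^-(\lambda_n)$ and, since it minimizes $I_{\lambda_n}$ over the open set $\N_{\lambda_n}\cap \mathcal{A}^-$ of $\N_{\lambda_n}$, yields via Lemma~\ref{lemm:crit} (noting $E_{\lambda_n}(|u_n|)=\intO a|u_n|^q\,dx<0$) a critical point of $I_{\lambda_n}$. If $\|\nabla u_n\|_p\to+\infty$ along a subsequence, Lemma~\ref{lem:bdd-PS} forces $\lambda=\lambda_1(p)$, contradicting the assumption $\lambda>\lambda_1(p)$. Hence $\{u_n\}$ is bounded, and by Lemma~\ref{lem:conv-gs-PS} a subsequence converges strongly in $\W$ to a solution $u_0$ of \eqref{eq:P}.

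Next, to confirm $u_0\in\N_\lambda\cap\mathcal{A}^-$, observe that strong convergence yields $E_\lambda(u_0)=\intO a|u_0|^q\,dx\le 0$. To rule out $u_0\equiv 0$, fix $\lambda'\in(\lambda,\lambda^*)$; for large $n$ we have $\lambda_n<\lambda'$ and by monotonicity (Proposition~\ref{prop:PCV}~\ref{prop:PCV:1}) $M^-(\lambda_n)\ge M^-(\lambda')>0$, so $I_{\lambda_n}(u_n)$ is bounded away from $0$, while $u_n\to 0$ would give $I_{\lambda_n}(u_n)\to 0$. To rule out $\intO a|u_0|^q\,dx=0$, note that then $u_0\neq 0$ is a solution with $E_\lambda(u_0)=0$, so $u_0=c\varphi_p$ for some $c\neq 0$ and $\lambda=\lambda_1(p)$, again contradicting $\lambda>\lambda_1(p)$ (and $\intO a\varphi_p^q\,dx<0$ would force $\intO a|u_0|^q\,dx<0$, independently). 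Therefore $u_0\in\N_\lambda\cap\mathcal{A}^-$ and in particular $I_\lambda(u_0)\ge M^-(\lambda)$.

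For the minimality, the plan is a standard fibered upper bound: for any $w\in\N_\lambda\cap\mathcal{A}^-$, continuity gives $E_{\lambda_n}(w)\to E_\lambda(w)<0$, so $t_{\lambda_n}(w)w\in\N_{\lambda_n}\cap\mathcal{A}^-$ and $I_{\lambda_n}(t_{\lambda_n}(w)w)\to I_\lambda(w)$. Thus $M^-(\lambda_n)\le I_{\lambda_n}(t_{\lambda_n}(w)w)$ yields $\lim_{n\to+\infty} M^-(\lambda_n)\le I_\lambda(w)$, whence $I_\lambda(u_0)=\lim_n I_{\lambda_n}(u_n)=\lim_n M^-(\lambda_n)\le M^-(\lambda)$, completing the proof. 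The main obstacle is step (b): proving $u_0\not\equiv 0$ and $\intO a|u_0|^q\,dx\neq 0$ without already knowing continuity of $M^-$, which is resolved precisely by exploiting the monotonicity from Proposition~\ref{prop:PCV}~\ref{prop:PCV:1} against a point strictly between $\lambda$ and $\lambda^*$, combined with the simplicity of $\lambda_1(p)$ to exclude the degenerate alternative.
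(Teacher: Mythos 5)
Your overall strategy is essentially the paper's: pass to $|u_n|$ and use Lemma \ref{lem:bdd-PS} (with $\lambda>\lambda_1(p)$) to get boundedness, Lemma \ref{lem:conv-gs-PS} to extract a strongly convergent subsequence with limit a solution $u_0$, the monotonicity of $M^-$ from Proposition \ref{prop:PCV} \ref{prop:PCV:1} together with Theorem \ref{prop:m-minus} to keep the energies $I_{\lambda_n}(u_n)=M^-(\lambda_n)$ bounded away from zero, and the fibered comparison $M^-(\lambda_n)\le I_{\lambda_n}(t_{\lambda_n}(w)w)\to \I(w)$ for the minimality of $u_0$; you also correctly avoid any circular use of the continuity of $M^-$.

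The one step that does not hold as written is your exclusion of $\intO a|u_0|^q\,dx=0$: from $E_\lambda(u_0)=0$ you infer $u_0=c\varphi_p$ and $\lambda=\lambda_1(p)$, but $E_\lambda(u_0)=0$ only says that the Rayleigh quotient of $u_0$ equals $\lambda$; it does not make $u_0$ an eigenfunction (the term $a|u_0|^{q-2}u_0$ in the equation does not vanish pointwise just because its integral against $|u_0|$ is zero), so no simplicity argument applies. The contradiction you want is nevertheless immediate: if $\intO a|u_0|^q\,dx=0\,(\ge 0)$, then $u_0$ is admissible in the definition \eqref{eq:lambda+*} of $\lambda^*$, hence $\|\nabla u_0\|_p^p/\|u_0\|_p^p\ge \lambda^*>\lambda$, i.e.\ $E_\lambda(u_0)>0$, contradicting $E_\lambda(u_0)=0$. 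Even more economically -- and this is the paper's one-line route -- the bound $\I(u_0)=\lim_{n} I_{\lambda_n}(u_n)\ge M^-(\lambda')>0$ that you already established, combined with the identity \eqref{eq:EonN1-0} (any nonzero solution lies in $\N$), yields $\intO a|u_0|^q\,dx<0$ directly, so the separate exclusion step is superfluous. With that repair your argument is complete and coincides with the paper's proof.
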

	\begin{proof}
		Notice that $|u_n|$ shares with $u_n$ the property of being a minimizer of $M^-(\lambda_n)$. 
		In view of the assumption $\lambda>\lambda_1(p)$, we apply Lemma~\ref{lem:bdd-PS} to deduce the boundedness of $\{|u_n|\}$ and hence of $\{u_n\}$ in $\W$. 
		Thus, by Lemma \ref{lem:conv-gs-PS}, $\{u_n\}$ converges to a solution $u_0$ of \eqref{eq:P} strongly in $\W$,  up to a subsequence.
		We have $u_0\not\equiv 0$ in $\Omega$. 
		Indeed, taking $\mu \in (\lambda, \lambda^*)$, 
		Theorem~\ref{prop:m-minus} and Proposition  \ref{prop:PCV} \ref{prop:PCV:1} imply that 
		$M^-(\lambda_n)>M^-(\mu)>0$ for any sufficiently large $n$. 
		Therefore, 
		we get
		\begin{equation*}\label{eq:behavior:m^-:1}
			\I(u_0)=\lim_{n\to+\infty}I_{\lambda_n}(u_n)=
			\lim_{n\to+\infty}M^-(\lambda_n)\ge M^-(\mu)>0, 
		\end{equation*}
		whence $u_0\in\N\cap\mathcal{A}^-$ and, in particular, $u_0$ is nonzero.
		Finally, in order to prove that $u_0$ is a minimizer of $M^-(\lambda)$, let us show that 
		$\I(u_0)\le \I(w)$ for all $w\in\N\cap\mathcal{A}^-$. 
		Fix any $w\in\N\cap\mathcal{A}^-$. 
		Then 
		$t_{\lambda_n}(w)w\in \mathcal{N}_{\lambda_n} \cap \mathcal{A}^-$ 
		for all sufficiently large $n$, and we obtain 
		\begin{equation}\label{eq:behavior:m^-:4}
			I_{\lambda_n}(u_n)=M^-(\lambda_n)\le 
			I_{\lambda_n}(t_{\lambda_n}(w)w). 
		\end{equation} 
		Noting that $t_{\lambda_n}(w)\to 1$ by $w\in\N$, and passing to the limit as $n\to+\infty$ in 
		\eqref{eq:behavior:m^-:4}, we get our assertion. 
	\end{proof}

	
	\section{Existence after \texorpdfstring{$\lambda^*$}{lambda-*}. Proof of Theorem \ref{thm:1}}\label{sec:thm1}
	
	In this section, we prove Theorem \ref{thm:1}.
	Throughout the section, \textit{we always assume that either the assumption \ref{thm:1:1} or \ref{thm:1:2} of Theorem \ref{thm:1} is satisfied}.
	
	\subsection{First solution}
	We prove Theorem \ref{thm:1} \ref{thm:1:r1}  and the first part of Theorem \ref{thm:1} \ref{thm:1:r2} on the existence of a local minimum point.
	
	\subsubsection{Local minimizer}\label{sec:local}
	Our arguments will rely on the definitions and results from Section \ref{sec:K*}.
	Take a sufficiently small $\delta>0$ as required in Lemmas \ref{lem:2} and \ref{lem:3}.
	Let us show that 
	there exists $\widehat{\lambda}>\lambda^*$ such that for any $\lambda \in [\lambda^*,\widehat{\lambda})$ we have
	\begin{equation}\label{eq:proof:1}
		\inf \{\widetilde{I}_\lambda(u):~
		u \in K^*\}<
		\inf \{\widetilde{I}_\lambda(u):~
		u \in \partial K^*_\delta\},
	\end{equation}
	where the sets $K^*$, $K^*_\delta$, $\partial K^*_\delta$ are defined by \eqref{eq:K}, \eqref{eq:K*delta}, \eqref{eq:K*delta-par}, respectively. We emphasize that these sets do not depend on $\lambda$.
	By writing 
	$$
	\widetilde{I}_{\lambda^*}(u)
	= 
	\wI(u)
	+
	\frac{\lambda-\lambda^*}{p} \intO u_+^p \,dx,
	$$
	we obtain the following uniform estimate with respect to $u \in K^*_\delta$ provided $\lambda\ge \lambda^*$:
	$$
	0
	\le 
	\widetilde{I}_{\lambda^*}(u)
	-
	\wI(u)
	\leq 
	\frac{\lambda-\lambda^*}{p}\max\{\|v_+\|_p^p:\,v\in K^*_\delta\,\}.
	$$
	Noting that $K^*_\delta$ is compact in $L^p(\Omega)$ by Lemma \ref{lem:2}, we deduce the inequality \eqref{eq:proof:1} from Lemma \ref{lem:3}.

	Let us now consider a minimization problem
	$$
	M_0(\lambda) := \inf \{\widetilde{I}_\lambda(u):~
	u \in K^*_\delta\}
	$$
	for $\lambda \in (\lambda^*,\widehat{\lambda})$.
	Since $K^*_\delta$ is weakly sequentially compact (see Lemma \ref{lem:2}), any minimizing sequence of $M_0(\lambda)$ has a weakly convergent subsequence and 
	its weak limit $u_\lambda$ belongs to $K^*_\delta$.
	In view of the inequality \eqref{eq:proof:1} and the weak lower semicontinuity of $\widetilde{I}_\lambda$, $u_\lambda$ stays in the interior of $K^*_\delta$ and hence $M_0(\lambda)$ is attained by $u_\lambda$.
	Consequently, $u_\lambda$ is a local minimum point of $\widetilde{I}_\lambda$ and a nonnegative solution of \eqref{eq:P}.
	Clearly, $u_\lambda$ is nonzero and $\wI (u_\lambda)<0$ according to Lemma~\ref{lem:local-negative}. 
	Moreover, $u_\lambda$ is positive in $\Omega_a^+$ by Lemma \ref{lem:positivity}.
	
	\medskip
	We define a critical value
	\begin{equation}\label{eq:Lambda*}
		\Lambda^* 
		:= \sup\,
		\left\{ 
		\begin{array}{l|l} 
			\bar{\lambda}>\lambda^* & 
			\text{for any}~ \lambda \in (\lambda^*,\bar{\lambda}) 
			~\text{there exist}~ u \in \W 
			~\text{and neighborhood} \\ 
			& K ~\text{of}~ u ~\text{such that}~ 
			\wI(u)=\inf_{K}\wI<\inf_{\partial K}\wI \, 
		\end{array} 
		\right\}. 
	\end{equation} 
	From the above arguments, we have $\Lambda^* > \lambda^*$.
	Notice that $u$ in the definition of $\Lambda^*$ is a local minimum point of $\wI$ which may not be a strict local minimum point. 
	We conclude, as above, that $u$ is a nonnegative solution of \eqref{eq:P} such that 
	$\wI(u)<0$ and $u>0$ in $\Omega_a^+$.

	\subsubsection{Least \texorpdfstring{$\wI$}{I-tilde}-energy solution}\label{sec:global}
	
	Let us define a critical value
	\begin{equation}\label{eq:Lambda}
		\Lambda 
		:=
		\sup\{\lambda:~
		\eqref{eq:P} ~\text{possesses a nonnegative solution}~ u ~\text{such that}~ u>0 ~\text{in}~ \Omega_a^+
		\}.
	\end{equation}
	We know from Section \ref{sec:local} that $\Lambda \geq \Lambda^* > \lambda^*$.
	Moreover, it will be proved in Section \ref{sec:nonexistence1} that $\Lambda$ is finite. 
	
	We start by showing that on the whole interval $(\lambda^*,\Lambda)$, 
	\eqref{eq:P} has a nonnegative solution which is positive in $\Omega_a^+$. 
	The interval $(\lambda^*,\Lambda^*)$ is covered by Section~\ref{sec:local}.
	Thus, if $\Lambda=\Lambda^*$, then we are done.
	Assume that $\Lambda>\Lambda^*$ and take any $\lambda \in [\Lambda^*, \Lambda)$. 
	By the definition of $\Lambda$, there exists $\bar{\lambda} \in (\lambda, \Lambda]$ for which {\renewcommand{\lambdaM}{\bar{\lambda}}\eqref{eq:P}} possesses a nonnegative solution $\bar{u}$ such that $\bar{u}>0$ in $\Omega_a^+$. 
	Clearly, 
	$\bar{u}$ is a \textit{supersolution} of \eqref{eq:P}, i.e.,
	\begin{equation*}
		\intO |\nabla \bar{u}|^{p-2}\nabla \bar{u}\nabla\varphi \,dx 
		\geq \lambda \intO \bar{u}^{p-1}\varphi\,dx
		+\intO a \bar{u}^{q-1}\varphi\,dx 
	\end{equation*}
	for any nonnegative $\varphi \in \W$. 
	Moreover, recall that $\bar{u} \in C^{1,\beta}(\overline{\Omega})$ for some $\beta \in (0,1)$, see Remark \ref{rem:reg}.
	We take $\underline{u} = 0$ as a subsolution of \eqref{eq:P}. 
	It is not hard to verify that the set
	$$
	\mathcal{S}
	:=
	\{
	u \in \W:~ 0 \leq u \leq \bar{u} ~\text{a.e.\ in}~ \Omega
	\}
	$$
	is closed in $\W$ and convex, and hence $\mathcal{S}$ is weakly closed.
	Therefore, we deduce from \cite[Theorem~1.2]{struwe} that 
	some $u_\lambda\in\mathcal{S}$ delivers a minimum value of 
	$\I$ (and hence of $\wI$) over $\mathcal{S}$. 
	Then, arguing exactly as in the proof of \cite[Theorem 2.4]{struwe}, we conclude that $u_\lambda$ is a (nonnegative) solution of \eqref{eq:P}.
	
	Since $u_\lambda$ is a minimizer of $\I$ over $\mathcal{S}$, it is easily seen that that $u_\lambda$ is nonzero. 
	Indeed, consider any nonnegative $\varphi \in C_0^\infty(\Omega) \setminus \{0\}$ with the support in $\Omega_a^+$. 
	In view of the assumption $q<p$ and the inequality $\intO a \varphi^q \,dx >0$, we deduce that $\I(t\varphi)<0$ for any sufficiently small $t>0$.
	Recalling that $\bar{u}>0$ in $\Omega_a^+$ and ${\rm supp}\,\varphi\subset \Omega_a^+$, 
	we have $t \varphi \in \mathcal{S}$ provided $t>0$ is small enough.
	Thus, $\min\{\I(w):  w \in \mathcal{S}\}<0$, which yields $\I(u_\lambda)<0$ and $u_\lambda \not\equiv 0$ in $\Omega$. 
	
	Let us show that $u_\lambda>0$ in $\Omega_a^+$.
	We will argue similarly to the proof of Lemma \ref{lem:positivity}.
	Suppose, contrary to our claim, that $u_\lambda(x_0)=0$ for some $x_0 \in \Omega_a^+$. 
	By the strong maximum principle, we have $u_\lambda \equiv 0$ in a connected component $A$ of $\Omega_a^+$ containing $x_0$.
	Consider any nonnegative $\varphi \in C_0^\infty(\Omega) \setminus \{0\}$ with the support in $A$. 
	As above, since $q<p$ and $\intO a \varphi^q \,dx >0$, we have $ \I(t\varphi)<0$ for any sufficiently small $t>0$.
	Taking $t>0$ smaller, if necessary, we also get $u_\lambda+t\varphi \in \mathcal{S}$.
	Therefore, we arrive at the following contradiction:
	$$
	\I(u_\lambda) = \min\{\I(\omega):  \omega \in \mathcal{S}\} 
	\leq 
	\I(u_\lambda+t\varphi)
	=
	\I(u_\lambda) +  \I(t\varphi)
	< \I(u_\lambda).
	$$
	
	Finally, we prove that for any $\lambda \in (\lambda^*, \Lambda)$, \eqref{eq:P} possesses a least $\wI$-energy solution. 
	Let $\{u_n\}$ be a sequence of nonnegative solutions to \eqref{eq:P} such that $\{\wI(u_n)\}$ converges to the infimum of $\wI$ among all nonnegative solutions. 
	This infimum is negative in view of the inequality
	$\wI(u_\lambda)<0$.
	Since $\lambda>\lambda^*\geq \lambda_1(p)$, we apply Lemma \ref{lem:bdd-PS} to deduce that the sequence $\{u_n\}$ is bounded, and hence, in accordance with Lemma \ref{lem:conv-gs-PS}, $\{u_n\}$ converges to a least $\wI$-energy 
	solution $w_\lambda$ of \eqref{eq:P} strongly in $\W$, up to a subsequence.
	Applying Lemma~\ref{lem:negative} \ref{lem:negative:2} and noting Remark~\ref{rem:positivity}, 
	we deduce that $w_\lambda>0$ at least in some connected component of $\Omega_a^+$.

	\subsection{Mountain pass solution}\label{sec:secondsol}
	
	In this section, we prove the existence of another solution $v_\lambda$ of \eqref{eq:P} for any $\lambda \in (\lambda^*, \Lambda^*)$, as stated in Theorem \ref{thm:1} \ref{thm:1:r2}, where $\Lambda^*$ is defined by \eqref{eq:Lambda*}.
	First, we establish a slightly more general (but less precise) result.
	\begin{theorem}\label{thm:2}
		Let $\lambda > \lambda^*$.
		Let $u \in \W$ be a local minimum point of $\wI$. 
		Then there exists another critical point $v$ of $ \wI$, $v \neq u$, $\wI(u) \leq \wI(v)<0$, and $v>0$ in some connected component of $\Omega_a^+$.
	\end{theorem}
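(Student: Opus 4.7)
The plan is to produce $v$ as a mountain pass critical point of $\wI$, using $u$ as the near local minimum and a point deep inside a fiber of $\wI$ as the far endpoint.

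I begin by collecting the structural ingredients. By Lemma \ref{lem:local-negative}, $\wI(u)<0$. Since $\lambda>\lambda^* \geq \lambda_1(p)$ by Proposition \ref{prop:+}, the Palais--Smale condition for $\wI$ holds by Lemma \ref{lem:PS}, and $\widetilde{M}(\lambda) = -\infty$ by Theorem \ref{thm:GS} \ref{thm:GS:2} combined with Proposition \ref{prop:wm=m}. More explicitly, $\lambda > \lambda^* = \lambda^*_+$ (Proposition \ref{prop:+}) furnishes a nonnegative $\psi \in \W$ with $\wE(\psi)<0$ and $\intO a \psi^q\,dx>0$, so that $\wI(t\psi) < 0$ for every $t>0$ and $\wI(t\psi) \to -\infty$ as $t \to +\infty$.

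Choose $r > 0$ so small that $\wI(u)\le \wI(w)$ for every $w\in\overline{B_r(u)}$. If $u$ happens not to be a strict local minimum, i.e.\ $\inf_{\partial B_\rho(u)}\wI=\wI(u)$ for every small $\rho>0$, then an Ekeland--Ghoussoub-type minimax argument on shrinking spheres together with the Palais--Smale condition produces a sequence of critical points of $\wI$ accumulating at $u$ at the level $\wI(u)<0$; any one of them different from $u$ furnishes the required $v$. Otherwise, after shrinking $r$, we have $\wI(u) < \eta := \inf_{\partial B_r(u)}\wI$. I then pick $T$ large enough that $u_1:=T\psi$ satisfies $\|\nabla(u_1-u)\|_p > r$ and $\wI(u_1)<\eta$, and set
$$
c := \inf_{\gamma \in \Gamma}\max_{s\in[0,1]}\wI(\gamma(s)), \qquad \Gamma := \{\gamma\in C([0,1],\W):~\gamma(0)=u,~\gamma(1)=u_1\}.
$$
Since every $\gamma \in \Gamma$ must cross $\partial B_r(u)$, we have $c\ge \eta>\wI(u)$; the mountain pass theorem together with the Palais--Smale condition then yields a critical point $v$ of $\wI$ with $\wI(v)=c>\wI(u)$, and in particular $v \neq u$.

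The delicate step is to verify $c<0$, for which I will exhibit one admissible path along which $\wI$ stays strictly negative. By the fiber analysis of Section \ref{sec:fiber} applied to $u\in\wN$ with $\wI(u)<0$, the function $t\mapsto\wI(tu)$ is strictly negative on $(0,(p/q)^{1/(p-q)})$, so $u$ can be continuously rescaled down to $\varepsilon u$ with $\varepsilon>0$ arbitrarily small while keeping $\wI<0$; likewise $s\mapsto\wI(sT\psi)$ is strictly negative on $(0,1]$ with a uniform negative bound on any compact subinterval of $(0,1]$. Concatenating the $u$-fiber from $u$ to $\varepsilon u$, the linear segment $s\mapsto \varepsilon u + sT\psi$, and the linear segment $s\mapsto (1-s)\varepsilon u+T\psi$, and using the continuity of $\wI$ on $\W$ to send $\varepsilon\to 0^+$, one checks that for $\varepsilon$ small enough the whole path lies in $\{\wI<0\}$, whence $c<0$. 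Finally, $v$ is a nonnegative solution of \eqref{eq:P} as a critical point of $\wI$, so $v\in\wN$ and $\wI(v)<0$; identity \eqref{eq:EonN1-0} then yields $\intO a v^q\,dx>0$, hence $v\not\equiv 0$ on $\Omega_a^+$, and Remark \ref{rem:positivity} (strong maximum principle) gives $v>0$ on at least one connected component of $\Omega_a^+$.
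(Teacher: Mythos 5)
Your overall strategy---run a mountain pass argument from the local minimum $u$ towards a point of very negative energy, and reduce everything to exhibiting one admissible path inside $\{\wI<0\}$---is exactly the paper's. The gap is in the step you yourself call delicate. Your path passes through the corner $\varepsilon u + sT\psi$ with both $\varepsilon$ and $s$ small, and the claim that it stays in $\{\wI<0\}$ there is unjustified and in general false. Writing $w=\rho\left((1-\tau)u+\tau T\psi\right)$ with $\rho=\varepsilon+s$ and $\tau=s/(\varepsilon+s)$, one has $\wI(w)=\tfrac{\rho^p}{p}\wE\!\left((1-\tau)u+\tau T\psi\right)-\tfrac{\rho^q}{q}\intO a\left((1-\tau)u+\tau T\psi\right)^q dx$, and since $q<p$ the second term dominates as $\rho\to0$. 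But $\intO a\left((1-\tau)u+\tau T\psi\right)^q dx$ need not be positive for intermediate $\tau$: positivity of $\intO a u^q\,dx$ and $\intO a\psi^q\,dx$ does not pass to convex combinations, because $t\mapsto t^q$ is superadditive for $q>1$ and $a$ changes sign, so the overlap of $u$ and $\psi$ on $\Omega_a^-$ can flip the sign of the integral. Whenever that happens, $\wI(w)>0$ for small $\rho$ along the corresponding ray, and your middle segment exits $\{\wI<0\}$. Note also that ``continuity and $\varepsilon\to0^+$'' cannot rescue this: the limiting path passes through $0$, where $\wI=0$, so uniform convergence gives no strict sign near the corner. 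The paper circumvents precisely this obstruction by interpolating in the $q$-th powers, $\xi(s)=\left((1-s)u^q+s\omega^q\right)^{1/q}$ with $\omega\in\wN$ nonnegative, so that $\intO a\,\xi(s)^q\,dx=(1-s)\intO au^q\,dx+s\intO a\omega^q\,dx>0$ is linear in $s$ and stays positive; the only remaining danger is that $\widetilde{E}_\lambda(\xi(s))$ may vanish, and there the fibered functional $\widetilde{J}_\lambda(\xi(s))\to-\infty$, which lets one truncate the path at a new endpoint of even lower energy. You need this (or an equivalent device) to close the argument.

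A secondary remark: your treatment of the non-strict local minimum via ``an Ekeland--Ghoussoub-type minimax argument on shrinking spheres'' is only sketched; the paper obtains the same dichotomy directly from the limiting case of the Pucci--Serrin mountain pass theorem \cite{PS} (if the mountain pass level equals $\wI(u)$, there is a critical point at that level distinct from $u$), which is the cleaner route here. The rest of your argument---the existence of $\psi$ from $\lambda>\lambda^*_+$, the Palais--Smale condition, and the identification of the sign of $v$ on a component of $\Omega_a^+$ via \eqref{eq:EonN1-0} and the strong maximum principle---is sound and matches the paper.
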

	\begin{proof} 
		Let $u$ be a local minimum point of $ \wI$. 
		Then $\wI(u)<0$ by Lemma \ref{lem:local-negative}.
		Let us take any $\omega\in\W$ satisfying $\wI(\omega)<\wI(u)$.
		Such $\omega$ exists since 
		$\widetilde{M}(\lambda) =  M(\lambda)= -\infty$ by Proposition~\ref{prop:wm=m}, Theorem~\ref{thm:GS}, and the assumption $\lambda>\lambda^*$. 
		By standard arguments, 
		we define the following mountain pass value: 
		\begin{equation*}
			c(\omega) := \inf_{\gamma\in\Gamma(\omega)} \max_{s\in[0,1]}  \wI(\gamma(s)), 
		\end{equation*} 	
		where
		\begin{equation}\label{eq:Gamma}
			\Gamma(\omega) 
			:=
			\left\{
			\gamma\in C([0,1],\W):~ \gamma(0) = u,~ 
			\gamma(1)=\omega
			\right\}. 
		\end{equation} 
		Since $\wI$ satisfies the Palais--Smale condition according to Lemma \ref{lem:PS}, 
		\cite[Theorem 1]{PS} implies that $c(\omega)$ is the critical level of $\wI$, and there exists a critical point $v=v(\omega)$ such that $c(\omega)=\wI(v)$ and $v \neq u$. 
		On the one hand, if $c(\omega)=\wI(u)$, then \cite[Theorem 1]{PS} ensures that 
		$v$ is also a local minimum point of $\wI$. 
		Consequently, we have
		$c(\omega)=\wI(v)<0$ and $v>0$ in $\Omega_a^+$ 
		by Lemma~\ref{lem:positivity}, and we are done.
		On the other hand, if $c(\omega)>\wI(u)$, then $v$ is a mountain pass solution 
		of \eqref{eq:P}. 
		If we know that $c(\omega)<0$, then 
		$v \not\equiv 0$ in $\Omega_a^+$, since otherwise Lemma~\ref{lem:negative}~\ref{lem:negative:2} 
		gives a contradiction to $\wI(v)=c(\omega)<0$. 
		That is, $v>0$  at least in one connected component of $\Omega_a^+$, see Remark~\ref{rem:positivity}. 
		Thus, in order to conclude our proof, it suffices to show that
		$c(\omega)<0$ for some function $\omega$ by  constructing a ``good'' path in $\Gamma(\omega)$.
		
		Recalling that ${M}(\lambda)= -\infty$ and 
		passing to the absolute value, 
		we have the existence of $\omega \in \wN$ such that $\omega \geq 0$ a.e.\ in $\Omega$ and $\wI(\omega)<\wI(u)$. 
		Consider the path
		$$
		\xi(s) 
		= 
		\left((1-s)u^q +s \omega^q\right)^{1/q}
		\quad \text{for}~ s\in [0,1]. 
		$$
		Since $u,\omega \geq 0$ a.e.\ in $\Omega$, we see that
		$$
		\xi(s) \geq 0 ~~\text{a.e.\ in}~~ \Omega,  
		\quad \text{and so}\quad \wI(\xi(s))=\I (\xi(s)) ~~\text{for any}~ s 
		\in [0,1]. 
		$$
		Moreover, by $\omega \in \widetilde{\mathcal{N}}_\lambda$ and $ \wI(\omega)<0$ we get $\intO a \omega^q \,dx > 0$, and hence
		\begin{equation}\label{eq:positiveint}
			\intO a (\xi(s))^q \,dx
			=
			(1-s) \intO a u^q \,dx
			+
			s \intO a \omega^q \,dx > 0
			~~\text{for any}~ s \in [0,1].
		\end{equation}
		
		If $\widetilde{E}_\lambda(\xi(s))>0$ for all $s\in [0,1]$, then 
		we readily see that $ \wI({t}_\lambda(\xi(s))\xi(s))<0$ for all $s \in [0,1]$, and hence $t_\lambda(\xi(s\cdot))\xi(\cdot)$ is the desired path 
		belonging to $\Gamma(\omega)$. 
		Recalling that $\widetilde{E}_\lambda(\xi(0))>0$ and $\widetilde{E}_\lambda(\xi(1))>0$, we suppose now that there exists $s_0 \in (0,1)$ satisfying $\widetilde{E}_\lambda(\xi(s_0))=0$. 
		Without loss of generality, we set 
		$$
		s_0 
		:= 
		\inf\{s\in (0,1):~ \widetilde{E}_\lambda(\xi(s))\le 0\}
		=
		\inf\{s\in (0,1):~ E_\lambda(\xi(s))\le 0\},
		$$ 
		whence $\widetilde{E}_\lambda (\xi(s))>0$ for all $s\in (0,s_0)$.
		In view of \eqref{eq:positiveint}, we deduce that $\I(t_\lambda(\xi(s))\xi(s))<0$  for all $s\in (0,s_0)$, and 
		$$
		\I(t_\lambda(\xi(s))\xi(s)) 
		= J_\lambda(\xi(s)) 
		=
		-\frac{p-q}{p q}\, 
		\frac{\left(\int_\Omega a u^q \,dx\right)^{\frac{p}{p-q}}}{(E_\lambda(u))^{\frac{q}{p-q}}}
		\to -\infty ~~\text{as}~~ s \nearrow s_0. 
		$$
		Thus, we can find $s_1 \in (0, s_0)$ such that 
		$\wI(t_\lambda(\xi(s_1))\xi(s_1))=\I(t_\lambda(\xi(s_1))\xi(s_1)) < \wI(u)$.
		Taking $\widetilde{\omega} := t_\lambda(\xi(s_1))\xi(s_1)$ and 
		considering the path $\eta(s) = t_\lambda(\xi(s_1 s))\xi(s_1 s)$ for $s\in[0,1]$, we see that $\eta\in \Gamma(\widetilde{\omega})$ 
		and $c(\widetilde{\omega})\le \max\limits_{s\in[0,1]}\wI(\eta(s))<0$, 
		which completes the proof. 
	\end{proof} 
	
	Now we are ready to prove the existence of the mountain pass solution of \eqref{eq:P} stated in Theorem~\ref{thm:1}~\ref{thm:1:r2}. 
	Let $\lambda \in (\lambda^*, \Lambda^*)$ and let $u$ be a local minimum point of $ \wI$ provided by the definition \eqref{eq:Lambda*} of $\Lambda^*$.
	Recall that $\wI(u)<0$ by Lemma \ref{lem:local-negative}.
	In view of the definition of $\Lambda^*$, 
	there exists a neighborhood $K$ of $u$ with the following property:
	\begin{equation}\label{eq:MP-2} 
		\inf\{\wI(u^\prime):~ u^\prime\in\partial K\}>
		\inf\{\wI(u^\prime):~ u^\prime\in K\}=\wI (u). 
	\end{equation}
	Thus, any $\omega \in \W$ such that $\wI(\omega)<\wI(u)$ must satisfy $\omega \not\in \overline{K}$. 
	Consequently, any path belonging to $\Gamma(\omega)$ (see \eqref{eq:Gamma}) intersects $\partial K$, and \eqref{eq:MP-2} yields $c(\omega)>\wI(u)$. 
	Arguing now exactly as in the second part of the proof of Theorem \ref{thm:2}, we obtain a mountain pass solution $v_\lambda$ such that 
	$0>c(\omega)(\text{or}\ c(\widetilde{\omega}))=\wI(v_\lambda)>\wI(u)$, and $v_\lambda>0$ in some connected component of $\Omega_a^+$. 
	This completes the proof of Theorem \ref{thm:1} \ref{thm:1:r2}.

	\subsection{Boundedness of \texorpdfstring{$\Lambda$}{Lambda}. Proof of Theorem \ref{thm:1} \ref{thm:1:r4}}\label{sec:nonexistence1}
	
	As a consequence of the definition \eqref{eq:Lambda} of $\Lambda$, for any $\lambda>\Lambda$ there is no nonnegative solution to \eqref{eq:P} which is positive in $\Omega_a^+$. 
	To make this statement nontrivial, we have to show that $\Lambda<+\infty$.
	In fact, we provide a slightly more general result, whose proof is rather standard anyway.
	Let us define 
	\begin{equation*}
		\Lambda_1 
		:=
		\sup\{\lambda:~
		\eqref{eq:P} ~\text{possesses a solution}~ u ~\text{such that}~ u>0 ~\text{in}~ \Omega_a^+
		\}.
	\end{equation*}
	That is, we do not require the solution $u$ to be nonnegative.
	Evidently, we have $\Lambda \leq \Lambda_1$.
	
	\begin{proposition}\label{prop:bounded}
		Assume that 
		$$
		\lambda > \lambda_1(p; \Omega_a^+) 
		:=
		\inf\left\{
		\frac{\int_{\Omega_a^+} |\nabla \varphi|^{p} \,dx}{\int_{\Omega_a^+}
			\varphi^p \, dx}:~ \varphi \in C_0^\infty(\Omega_a^+) \setminus \{0\}, ~ \varphi \geq 0
		\right\}.
		$$
		Then \eqref{eq:P} does not possess a solution $u$ satisfying $u>0$ in $\Omega_a^+$.
		In particular, $\Lambda \leq \Lambda_1 \leq \lambda_1(p;\Omega_a^+) < +\infty$.
	\end{proposition}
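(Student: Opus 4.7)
The idea is a standard Picone/weighted test-function argument. Suppose, for contradiction, that there exists a solution $u$ of \eqref{eq:P} with $u>0$ in $\Omega_a^+$ while $\lambda>\lambda_1(p;\Omega_a^+)$. Then I would pick an arbitrary nonnegative $\varphi\in C_0^\infty(\Omega_a^+)\setminus\{0\}$ and test the weak formulation of \eqref{eq:P} against
$\psi:=\varphi^p/u^{p-1}.$
Since $u\in C^{1,\beta}_0(\overline\Omega)$ by Remark~\ref{rem:reg} and $u>0$ on the compact set $\mathrm{supp}\,\varphi\subset\Omega_a^+$, we have $\min_{\mathrm{supp}\,\varphi}u>0$, so $\psi\in\W(\Omega)$ is an admissible test function, and on $\mathrm{supp}\,\varphi$ one may safely write $|u|^{p-2}u=u^{p-1}$ even though $u$ need not be signed outside $\Omega_a^+$.

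The key computation uses Young's inequality in the standard Picone form: expanding $\nabla\psi$ and applying $p\varphi^{p-1}|\nabla u|^{p-1}|\nabla\varphi|\,u^{-(p-1)}\le |\nabla\varphi|^p+(p-1)\varphi^p|\nabla u|^p u^{-p}$ yields the pointwise bound
$$|\nabla u|^{p-2}\nabla u\cdot\nabla\psi\le |\nabla\varphi|^p$$
on $\mathrm{supp}\,\varphi$. Integrating and substituting the weak equation gives
$$\int_{\Omega_a^+}|\nabla\varphi|^p\,dx\ge\lambda\int_{\Omega_a^+}\varphi^p\,dx+\int_{\Omega_a^+}a\,u^{q-p}\varphi^p\,dx\ge\lambda\int_{\Omega_a^+}\varphi^p\,dx,$$
where the last inequality uses $a>0$ and $u>0$ in $\Omega_a^+$. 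Dividing by $\int\varphi^p$ and taking the infimum over admissible $\varphi$ yields $\lambda_1(p;\Omega_a^+)\ge\lambda$, contradicting the assumption.

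For the finiteness of $\lambda_1(p;\Omega_a^+)$, since $\Omega_a^+$ is a nonempty open subset of $\Omega$, it contains an open ball $B$, and the monotonicity of the first Dirichlet eigenvalue of the $p$-Laplacian under domain inclusion gives $\lambda_1(p;\Omega_a^+)\le\lambda_1(p;B)<+\infty$. Finally, the chain $\Lambda\le\Lambda_1\le\lambda_1(p;\Omega_a^+)$ follows immediately: the first inequality is by definition (every nonnegative solution is a solution), and the second is the nonexistence statement just proved.

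The only nonroutine point is checking that $\psi=\varphi^p/u^{p-1}\in\W(\Omega)$ and that the pointwise Picone bound passes to the integral form; both are handled by the uniform positivity of $u$ on $\mathrm{supp}\,\varphi$ together with the $C^{1,\beta}$-regularity. No assumption of nonnegativity of $u$ on all of $\Omega$ is needed, which is what makes the estimate $\Lambda\le\Lambda_1$ meaningful.
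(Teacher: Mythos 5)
Your proof is correct and follows essentially the same route as the paper: testing the weak formulation with $\varphi^p/u^{p-1}$ for nonnegative $\varphi\in C_0^\infty(\Omega_a^+)$, using the uniform positivity of $u$ on $\mathrm{supp}\,\varphi$ to justify admissibility, and invoking Picone's inequality (which you re-derive via Young's inequality rather than citing \cite{Alleg}) to get $\lambda\le\lambda_1(p;\Omega_a^+)$. Your additional remarks on the finiteness of $\lambda_1(p;\Omega_a^+)$ and the chain $\Lambda\le\Lambda_1\le\lambda_1(p;\Omega_a^+)$ are fine and match what the paper leaves implicit.
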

	\begin{proof}
		Let $u\in \W(\Omega)$ be a solution of \eqref{eq:P} with some $\lambda \in \mathbb{R}$ such that $u>0$ in $\Omega_a^+$.
		Recall that $u\in C^1_0(\overline{\Omega})$, see Remark \ref{rem:reg}. 
		Let us take any nonnegative $\varphi \in C_0^\infty(\Omega) \setminus \{0\}$ 
		satisfying ${\rm supp}\,\varphi\subset \Omega_a^+$. 
		Then there exists $c>0$ such that $u(x) \geq c$ for any $x \in \text{supp}\, \varphi$.
		Therefore, 
		$\frac{\varphi}{u} \in L^\infty(\Omega)$, and so, by the regularity,  $\frac{\varphi^p}{u^{p-1}} \in C_0^1(\Omega_a^+)$ and we can use it as a test function for \eqref{eq:P}. 
		Applying the standard Picone inequality \cite[Theorem 1.1]{Alleg}, we get
		\begin{align*}
			\int_{\Omega_a^+} |\nabla \varphi|^{p} \,dx
			&\geq
			\int_{\Omega_a^+} |\nabla u|^{p-2} \nabla u\nabla \left(\frac{\varphi^p}{u^{p-1}}\right) dx\\
			&=\lambda \int_{\Omega_a^+} 
			\varphi^p \, dx 
			+ 
			\int_{\Omega_a^+} a(x) u^{q-p}
			\varphi^p \,dx
			\geq
			\lambda \int_{\Omega_a^+} \varphi^p \, dx.
		\end{align*}
		Consequently, $\lambda$ must be bounded from above as follows:
		$$
		\lambda \leq \frac{\int_{\Omega_a^+} |\nabla \varphi|^{p} \,dx}{\int_{\Omega_a^+}
			\varphi^p \, dx} < +\infty.
		$$
		Minimizing over all such $\varphi$, we conclude that
		$\lambda \leq \lambda_1(p;\Omega_a^+)$.
	\end{proof}
	
	\begin{remark}
		The same argument as in the proof of Proposition \ref{prop:bounded} shows that
		\eqref{eq:P} {\it does not} possess a solution $u$ satisfying $u>0$ in a 
		connected component $A$ of $\Omega_a^+$ 
		provided $\lambda>\lambda_1(p;A)$. 
		In particular, if $\{A_i\}$ is the set of all connected components of $\Omega_a^+$ and $\max_i \lambda_1(p;A_i)<+\infty$, then \eqref{eq:P} does not possess a solution $u$ satisfying $0 \not\equiv u \geq 0$ in $\Omega_a^+$ whenever $\lambda > \max_i \lambda_1(p;A_i)$.
	\end{remark}

	\section{Proof of Theorem \ref{thm:3}}\label{sec:thm3}

	Let us note that $\lambda_1(p)=\lambda^*$ since we impose the assumption $\intO a\varphi_p^q\,dx=0$, see Proposition~\ref{prop:+}. 
	Recall that 
	$\wI^\mu$ is the truncated functional defined by 
	\eqref{def:functionals-t} with the weight function 
	$a_\mu=a+\mu b$ replacing $a$. 
	
	The proof of the first part of Theorem \ref{thm:3} goes along the same lines as the proof of Theorem \ref{thm:1}. 
	Indeed, since $\wIm$ is a continuous perturbation of $\widetilde{I}_{\lambda^*} (\equiv \widetilde{I}_{\lambda^*}^0)$, we have the analog of the key inequality \eqref{eq:proof:1} for $\wIm$. 
	Namely, there exist $\widehat{\mu}>0$ and $\varepsilon>0$ such that for any $\mu \in (-\widehat{\mu},\widehat{\mu})$ and any $\lambda \in [\lambda_1(p)-\varepsilon,\lambda_1(p)+\varepsilon]$ we have
	\begin{equation}\label{eq:proof:1m}
		\inf \{\wIm(u):~
		u \in K^*\}<
		\inf \{\wIm(u):~
		u \in \partial K^*_\delta\}.
	\end{equation}
	Here, $K^*$, $K^*_\delta$, and $\partial K^*_\delta$ are the sets defined in Section \ref{sec:K*}, and these sets are independent of $\mu$ and $\lambda$.
	The case $\mu \leq 0$ implies $\intO a_\mu \varphi_p^q \,dx \leq 0$, which is covered by Theorem \ref{thm:1}.
	That is why we interested only in $\mu \in (0,\widehat{\mu})$, for which there holds $\intO a_\mu \varphi_p^q \,dx > 0$.
	
	Fixing $\mu \in (0,\widehat{\mu})$ and considering a minimization problem
	$$
	M_\mu(\lambda) := \inf \{\wIm(u):~
	u \in K^*_\delta\}
	$$
	for $\lambda \in [\lambda_1(p)-\varepsilon,\lambda_1(p)+\varepsilon]$, we deduce as in Section \ref{sec:local} that $M_\mu(\lambda)$ is attained by a critical point $u_\lambda \in K^*_\delta$ of $\wIm$ which is a local minimum point, $\wIm(u_\lambda)<0$ by Lemma \ref{lem:local-negative}, and $u_\lambda$ is positive in $\Omega_a^+$ in view of Lemma \ref{lem:positivity}.
	Arguing now exactly as in Sections \ref{sec:global}, \ref{sec:secondsol}, and \ref{sec:nonexistence1}, we obtain
	the analogs of the assertions \ref{thm:1:r1}, \ref{thm:1:r2}, \ref{thm:1:r4} of Theorem \ref{thm:1}  for the problem \eqref{eq:Pm} and the corresponding functional $\wIm$.
	
	Let us prove the second part of Theorem \ref{thm:3} on the existence of three solutions in a left neighborhood of $\lambda_1(p)$.
	Since $u_\lambda\in K^*_\delta$, $K^*_\delta$ is compact in $L^p(\Omega)$ and 
	$L^q(\Omega)$ (see Lemma \ref{lem:2}), and $K^*_\delta$ is independent of $\lambda$ and $\mu$, 
	we obtain the following uniform lower bound on 
	$\wIm(u_\lambda)$:
	\begin{equation*} 
		\wIm(u_\lambda)
		\ge 
		-\frac{\lambda_1(p)}{p}\max\{\|v_+\|_p^p:\,v\in K^*_\delta\,\}
		-\frac{\|a\|_\infty+\widehat{\mu} \|b\|_\infty}{q} \, 
		\max\{\|v_+\|_q^q:\,v\in K^*_\delta\,\} 
	\end{equation*} 
	for any 
	$\lambda\in[\lambda_1(p)-\varepsilon,\lambda_1(p)]$.
	At the same time, in view of the inequality $\intO a_\mu\varphi_p^q\,dx>0$, the global minimum point $w_\lambda$ of $\wIm$ given by Theorem \ref{thm:GS} \ref{thm:GS:1} for $\lambda<\lambda_1(p)$ satisfies $\wIm(w_\lambda) \to -\infty$ as $\lambda \to \lambda_1(p)-0$, 
	see Proposition~\ref{prop:properties-m}~\ref{prop:properties-m:4} 
	in combination with  Theorem~\ref{NoGroundStates} \ref{NoGroundStates:3}. 
	Thus, we conclude that there exists a sufficiently small $\epsilon>0$ such that
	\begin{equation}\label{eq:iwiu}
		\wI^\mu(w_\lambda)<\wI^\mu(u_\lambda)<0
		\quad\text{for any}~ 
		\lambda \in (\lambda_1(p)-\epsilon,\lambda_1(p)).
	\end{equation}
	In particular, $w_\lambda$ is different from $u_\lambda$ for such $\lambda$.
	Finally, using \eqref{eq:iwiu} and arguing as in Section~\ref{sec:secondsol} (see also \cite[Corollary 1]{PS}), we establish the existence of the third critical point $v_\lambda$ of $\wIm$ for any $\lambda \in (\lambda_1(p)-\epsilon,\lambda_1(p))$. 
	Thanks to the inequality \eqref{eq:proof:1m}, this critical point has the mountain pass type and satisfies $\wIm(w_\lambda) < \wIm(u_\lambda) < \wIm(v_\lambda)<0$.
	Therefore, we obtain three different nonnegative solutions of \eqref{eq:P}.
	The proof is complete.

	\section{Nonexistence after \texorpdfstring{$\lambda^*$}{lambda-*}. Proof of Theorem~\ref{NoSol}}\label{sec:nonexistence}
	
	Let $u\in \W$ be a nonnegative solution of \eqref{eq:P} such that $u>0$ in $\Omega_a^+$.
	Recall that $u\in C^1_0(\overline{\Omega})$ by Remark~\ref{rem:reg}. 
	Fix any $\varepsilon>0$. Then 
	$\frac{\varphi_p}{u+\varepsilon} \in L^\infty(\Omega)$, and so 
	we can choose $\frac{\varphi_p^q}{(u+\varepsilon)^{q-1}}$ as a test function for \eqref{eq:P}. 
	Applying the classical Picone inequality \cite[Theorem 1.1]{Alleg} and the generalized Picone inequality \cite[Theorem 1.8]{BT_Picone}, we get
	\begin{align*}
		\lambda \intO \left(\dfrac{u}{u+\varepsilon}\right)^{q-1}\,
		u^{p-q} \varphi_p^q \, dx 
		&+ \intO a\left(\dfrac{u}{u+\varepsilon}\right)^{q-1}\varphi_p^q \,dx 
		\\ 
		=
		\intO |\nabla u|^{p-2} \nabla u\nabla \left(\frac{\varphi_p^q}{(u+\varepsilon)^{q-1}}\right) dx 	
		&\leq
		\intO |\nabla \varphi_p|^{p-2} \nabla \varphi_p \nabla \left(\frac{\varphi_p^{q-p+1}}{(u+\varepsilon)^{q-p}}\right) dx
		\\
		&=
		\lambda_1(p) \intO \varphi_p^q\,(u+\varepsilon)^{p-q} \, dx, 
	\end{align*}
	where the last equality follows since $\varphi_p$ is the first eigenfunction of the $p$-Laplacian.
	Letting $\varepsilon\to 0$, we obtain 
	\begin{equation}\label{eq:NoSol:2}
		(\lambda_1(p)-\lambda)\intO u^{p-q}\varphi_p^q\,dx 
		\ge \int_{\{x\in\Omega:\, u(x)>0\}}\,a\varphi_p^q\,dx. 
	\end{equation} 
	Recalling that $u>0$ in $\Omega_a^+$ and $\varphi_p>0$ in $\Omega$, we get
	\begin{equation}\label{eq:NoSol:3}
		\int_{\{x\in\Omega:\, u(x)>0\}}\,a\varphi_p^q\,dx 
		\ge \intO a\varphi_p^q\,dx. 
	\end{equation} 
	Combining \eqref{eq:NoSol:2} and \eqref{eq:NoSol:3}, we arrive at 
	\begin{equation*}\label{eq:NoSol:4} 
		(\lambda_1(p)-\lambda)\intO u^{p-q}\varphi_p^q\,dx 
		\ge \intO a\varphi_p^q\,dx. 
	\end{equation*} 
	This leads to either
	$\lambda<\lambda_1(p)$ or 
	$\lambda\le \lambda_1(p)$, provided
	$\intO a\varphi_p^q\,dx> 0$ or $\intO a\varphi_p^q\,dx= 0$, 
	respectively, which proves the theorem.

	\bigskip
	\noindent
	\textbf{Acknowledgments.}
	V.~Bobkov was supported in the framework of implementation of the development program of Volga Region Mathematical Center (agreement no.~075-02-2021-1393).
	M.~Tanaka was supported by JSPS KAKENHI Grant Number JP 19K03591.

	\addcontentsline{toc}{section}{\refname}
	\small

\end{document}